
\documentclass [a4paper]{article}

\usepackage [latin1]{inputenc}

\usepackage{amsmath,amsfonts, amsthm, amssymb, version}
\usepackage[colorlinks=false,urlcolor=blue,linkcolor=green]{hyperref}

\pagestyle{plain}



\numberwithin{equation}{section}



\allowdisplaybreaks[1]

\newtheorem{theorem}{Theorem}
\newtheorem{corollary}[theorem]{Corollary}
\newtheorem{proposition}[theorem]{Proposition}
\newtheorem{lemma}[theorem]{Lemma}
\newtheorem*{theorem*}{Theorem}
\newtheorem*{proposition*}{Proposition}
\newtheorem*{lemma*}{Lemma}

\theoremstyle{remark}
\newtheorem{remark}[theorem]{Remark}

\theoremstyle{definition}
\newtheorem{definition}[theorem]{Definition}
\newtheorem{example}[theorem]{Example}

\newtheorem*{question}{Question}

\newcommand{\N}{\mathbb{N}}
\newcommand{\R}{\mathbb{R}}
\newcommand{\C}{\mathbb{C}}
\newcommand{\K}{\mathbb{K}}
\newcommand{\A}{\mathfrak{A}}
\newcommand{\F}{\mathfrak{F}}

\newcommand\pnorm[2]{{\Vert #1 \Vert}_{#2}}
\newcommand\norm[1]{\Vert #1 \Vert}

\numberwithin{theorem}{section}

\setcounter{tocdepth}{3}

\begin{document}

\title{$(m,p)$-isometric and $(m,\infty)$-isometric operator tuples on normed spaces}
\author{Philipp H.W. Hoffmann and Michael Mackey}

\date{}
\maketitle
\pagestyle{myheadings}
\markright{$(m,p)$-isometric and $(m,\infty)$-isometric operator tuples on normed spaces}

\renewcommand{\thefootnote}{\fnsymbol{footnote}}
\begin{center}
\emph{Preprint of an article to appear in:\\ Asian-European Journal of Mathematics, Vol. 8, No. 2 (2015)\footnote{Copyright World Scientific Publishing Company\\ Journal URL: \url{http://www.worldscientific.com/worldscinet/aejm}}
\\$\textrm{}$\\}
\end{center}
\setcounter{footnote}{0}
\renewcommand{\thefootnote}{\arabic{footnote}}
\begin{abstract}
  We generalize the notion of $m$-isometric operator tuples on
  Hilbert spaces in a natural way to operator tuples on normed spaces. This is done by
  defining a tuple analogue of $(m,p)$-isometric operators, so-called
  $(m,p)$-isometric operator tuples. We then extend this definition
  further by introducing $(m,\infty)$-isometric operator tuples and
  study properties of and relations between these objects.
\end{abstract}

{\bf Keywords:}\ normed space, Banach space, operator tuple,
$m$-isometry,\\ $(m, p)$-isometry, $(m,\infty)$-isometry, generalised isometry\\

AMS Subject Classification: 47B99, 05A10

\section{Introduction}
Let $H$ be a Hilbert space over $\K \in \{\R, \C\}$ and let $\N$ denote the natural numbers including $0$.
If $m \in \N$, then a bounded linear operator $T \in B(H)$ is called an \emph{$m$-isometry} if, and only if, 
\begin{align}\label{original}
	\sum_{k=0}^{m}(-1)^{m-k}\binom{m}{k} T^{*k}T^{k}=0.
\end{align}
(Where, $T^0:=I$ is the identity operator for all $T \in B(X)$.) The case $m=0$ is trivial; the only example is an operator on the 
space $\{0\}$.

Originating in works of Richter \cite{ri} (the Dirichlet shift being
the standard example of a $2$-isometry) and Agler \cite{ag} in the
1980s, operators of this kind have been studied extensively by Agler
and Stankus in three papers \cite{ag-st1,ag-st2,ag-st3} and since then
attracted the interest of many other authors (see for example
\cite{bemarmart}, \cite{bemane} or \cite{Ah-He}). 

In recent years, two
generalisations of the definition of $m$-isometries have been given.
Gleason and Richter in \cite{gleari} extend the notion of $m$-isometric operators to the case of commuting $d$-tuples of bounded linear operators on a Hilbert space. The defining equation for an \emph{$m$-isometry} (or 
\emph{$m$-isometric tuple}) $T=(T_1,...,T_d) \in B(H)^d$ reads: 
\begin{align*}
	\sum_{k=0}^{m}(-1)^{k}\binom{m}{k} \sum_{|\alpha |=k}\frac{k!}{\alpha!}{T^{\alpha}}^*T^{\alpha} =0.
\end{align*}
Here, $m$ is again a non-negative integer, $\alpha$ is a multi-index, $|\alpha|$ the sum of its entries and 
$\frac{|\alpha|!}{\alpha!}=\frac{|\alpha|!}{\alpha_1!\cdots \alpha_d!}= \binom{|\alpha|}{\alpha}$) a multinomial 
coefficient.

On the other hand, the notion of $m$-isometric operators on Hilbert spaces has been generalized to operators on general 
Banach spaces in papers of Botelho \cite{bo}, Sid Ahmed \cite{sa} and Bayart \cite{ba}. In Bayart's definition, given 
$m \in \N$ and $p \in [1,\infty)$, an operator $T \in B(X)$ on a Banach space $X$ over $\K$ is called an 
\emph{$(m,p)$-isometry} if, and only if, 
\begin{align}\label{defining equation (m,p)-operator}
	\forall x \in X, \ \ \sum_{k=0}^{m}(-1)^{k}\binom{m}{k} \norm{T^{k}x}^{p}=0.
\end{align}
It is easy to see that, if $X=H$ is a Hilbert space and $p=2$, this definition coincides with the original 
definition \eqref{original} of $m$-isometries.\footnote{In the case $\K=\R$ this holds, because the operator 
$\sum_{k=0}^{m}(-1)^{m-k}\binom{m}{k} T^{*k}T^{k}$ is self-adjoint.} 
In \cite{HoMaOs} the relationship and intersection class between $(m,p)$- and $(\mu,q)$-isometries is studied. Bermúdez, Martinón and Müller 
give in \cite{bemamu} an example of an unbounded operator satisfying \eqref{defining equation (m,p)-operator}. 
We will, however, assume boundedness for convenience.

In this paper, we combine both generalisations and consider so-called $(m,p)$-isometric operator tuples on normed 
spaces, which will be defined in a natural way.

An extension of the definition of $(m,p)$-isometric operators was given in \cite{HoMaOs} to include the case 
$p = \infty$: If $m \in \N$ with $m \geq 1$, then an operator $T \in B(X)$ is called an \emph{$(m,\infty)$-isometry}
if, and only if,
\begin{align*}
	\forall x \in X, \ \ \max_{\substack{k=0,...,m \\ k \ \textrm{even}}}\|T^{k}x\| 
	= \max_{\substack{k=0,...,m \\ k \ \textrm{odd}}}\|T^{k}x\|.
\end{align*}
We will generalize this definition to the commuting tuple case in a natural way and give a conjecture on the 
intersection class of $(m,p)$-isometric and $(m,\infty)$-isometric tuples in the last part of this paper.

In the following, $X$ will denote a normed (not necessarily complete) vector space over $\K$ (unless 
stated otherwise, for example in section \ref{section spectral}). For $d \in \N$, with $d \geq 1$, let 
$T=(T_1,...,T_d) \in B(X)^d$ be a tuple of commuting bounded linear operators on $X$. (Boundedness is 
actually not essential for the definition and the basic properties of the objects we are about to discuss, but plays a 
role in the later theory.) Greek letters like $\alpha=(\alpha_1,...,\alpha_d) \in \N^d$ will denote tuples of natural 
numbers (multi-indices) or their entries, respectively. The norm or `length' of $\alpha$ will be defined by 
$|\alpha|=\sum_{j=1}^d \alpha_j$ and we set further $T^{\alpha}=T^{\alpha_1}_1\cdots T^{\alpha_d}_d$.

To denote the tuple which we obtain after removing $T_{j}$ from $T=(T_1,...,T_d)$, we will write $T'_j$  
(that is, $T'_j = (T_1,...,T_{j-1},T_{j+1},...,T_d)$ ). We use the notation $\alpha'_j$ analogously.
Finally (again if not stated otherwise), we take the exponent $p$ to be a positive real number, $p\in (0,\infty)$.

\section{Definitions and Preliminaries}

For $T \in B(X)^d$ commuting, $x \in X$ and $p \in (0,\infty)$ as above, define the sequences
$(Q^{n,p}(T,x))_{n \in \N}$ by $Q^{n,p}(T,x):=\sum_{|\alpha|=n}\frac{n!}{\alpha!}\|T^{\alpha} x\|^p$. For all $\ell \in \N$, define further the functions $P^{(p)}_{\ell}(T, \cdot):X \rightarrow \R$, by 
\begin{align*}
	P^{(p)}_{\ell}(T, x):=\sum_{k=0}^{\ell} (-1)^{\ell-k} \binom{\ell}{k} Q^{k,p}(T,x).
\end{align*}

\begin{definition}
Given $m \in \N$ and $p \in (0,\infty)$, a commuting operator tuple $T \in B(X)^d$ is called an \emph{$(m,p)$-isometry} (or \emph{$(m,p)$-isometric tuple}) if, and only if,
\begin{align*}
	P^{(p)}_m(T,x)= &\sum_{k=0}^m (-1)^{m-k} \binom{m}{k} Q^{k,p}(T,x)\\ 
	= &\sum_{k=0}^m (-1)^{m-k} \binom{m}{k} \sum_{|\alpha|=k} \frac{k!}{\alpha!}\norm{T^\alpha x}^p = 0, \ \ 
	\forall x \in X. 
\end{align*}
\end{definition}

Again, it is clear that the case $m=0$ is trivial. Further, since the operators $T_1,...,T_d$ are commuting, every permutation of an $(m,p)$-isometric tuple is also an $(m,p)$-isometric tuple.

If the context is clear, we will simply write $P_{\ell}(x)$ and $Q^n(x)$ instead of $P^{(p)}_{\ell}(T, x)$ and 
$Q^{n,p}(T,x)$.  This definition coincides with the definition of $m$-isometric tuples by Gleason and Richter if $X$ is 
a Hilbert space (and $p=2$) and has, in that context as an equivalent description, essentially already been presented in
\cite[Lemma 2.1]{gleari}.

Consequently, as one would expect, the basic theory of $(m,p)$-isometric tuples can be evolved in a similar fashion as 
in \cite{gleari}. However, we will use a different approach, based on an idea described in \cite{HoMaOs}.

Let, as in \cite[Notation 3.1]{HoMaOs}, the symbol $\mathfrak{F}$ denote the set of real functions whose domain is a 
subset of $\R$ which is invariant under the mapping $S: t\rightarrow t+1$. Further, define 
$D: \mathfrak{F} \rightarrow \mathfrak{F}$ by $Dg:=g-(g\circ S)$ for each $g\in\mathfrak F$ 
(that is, $D$ is the backward operator with difference interval $1$). 
Then 
\begin{align*}
	D^mg = \sum_{k=0}^{m}(-1)^{k}\binom{m}{k}(g\circ S^k)
\end{align*}
for all $g\in\mathfrak{F}$ and all $m \in \mathbb{N}$. Note that the set of all real sequences $\mathfrak{A}$ is a 
subset of $\mathfrak{F}$ and that
\begin{align*}
D^ma = \left(\sum_{k=0}^{m}(-1)^{k}\binom{m}{k}a_{n+k}\right)_{n \in \mathbb{N}}, \ \ \forall \ a = (a_n)_{n \in \N} 
\in \A \ \ \textrm{and} \ \ \forall \ m \in \mathbb{N}.
\end{align*}
Then $T \in B(X)^d$ is an $(m, p)$-isometric tuple, if and only if, 
\begin{align*}
\left(D^m(Q^{n}(x))_{n \in \N}\right)_0=0, \ \ \forall x \in X.
\end{align*}
Now \cite[Proposition 3.2.(ii)]{HoMaOs} states the following\footnote{This proposition is actually a
special case of a more general and well-known fact about functions defined on the natural numbers, which can, for 
example, be found in \cite[Satz 3.1]{aign}.}:

\begin{proposition}\label{prop. HoMaOs}
Let $a \in \mathfrak{A}$ and $m \in \N$. We have $D^ma=0$ if, and only if, there exists a (necessarily 
unique) polynomial function $f$ with $\deg f \leq m-1$ such that $f|_{\N}=a$. \footnote{To account for the case 
$m=0$, set $\deg 0 = - \infty$.}
\end{proposition}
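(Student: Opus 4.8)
The plan is to prove both implications of the equivalence directly, exploiting the linearity of $D$ and a telescoping/induction argument on $m$.

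First I would dispose of the trivial base case $m = 0$: here $D^0 a = a = 0$ means $a$ is the zero sequence, which is exactly the restriction to $\N$ of the zero polynomial, whose degree we have agreed to set to $-\infty \le -1$. Conversely, if $a = f|_\N$ with $\deg f \le -1$, then $f = 0$ and $a = 0 = D^0 a$. For the case $m = 1$, note that $D a = 0$ says $a_n = a_{n+1}$ for all $n$, i.e. $a$ is constant, which is precisely the restriction of a constant (degree $\le 0$) polynomial.

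For the inductive step, suppose the statement holds for $m$ and let $D^{m+1} a = 0$, i.e. $D^m(Da) = 0$. By the inductive hypothesis applied to the sequence $Da \in \mathfrak{A}$, there is a polynomial $g$ with $\deg g \le m - 1$ such that $(Da)_n = a_n - a_{n+1} = g(n)$ for all $n \in \N$. The key step is then to produce a polynomial $f$ with $\deg f \le m$ and $f - f\circ S = g$, i.e. $f(t) - f(t+1) = g(t)$; once we have such an $f$, the sequence $a - f|_\N$ satisfies $D(a - f|_\N) = Da - Dg = 0$, hence is constant by the $m=1$ case, say equal to $c$, and then $a = f|_\N + c = (f + c)|_\N$ with $\deg(f+c) \le m$. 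To construct $f$ from $g$: the linear map $P \mapsto P - P\circ S$ sends the space of polynomials of degree $\le m$ into the space of polynomials of degree $\le m - 1$ (the top-degree terms cancel), and it is surjective between these finite-dimensional spaces since its kernel is exactly the constants (dimension $1$), so by rank-nullity the image has the full dimension $m$ of the target. Hence $g$ lies in the image and a suitable $f$ exists.

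For the converse direction, if $a = f|_\N$ with $\deg f \le m - 1$, then $D^m a = (D^m f)|_\N$ where $D^m f$ is a polynomial; since each application of $D$ lowers the degree by at least one, $D^m f$ has degree $\le -1$, i.e. $D^m f = 0$, so $D^m a = 0$. Uniqueness of $f$ is immediate: if $f_1|_\N = f_2|_\N$ then $f_1 - f_2$ is a polynomial vanishing at infinitely many points, hence identically zero. The main obstacle is the surjectivity argument for $P \mapsto P - P \circ S$; this is where the degree bookkeeping must be done carefully, but it reduces to the elementary linear-algebra fact just sketched (alternatively, one can exhibit $f$ explicitly by integrating $g$ in the discrete sense, building it up term by term from the leading coefficient of $g$). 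Since the cited \cite[Satz 3.1]{aign} already records this fact, one could also simply invoke it, but the self-contained induction above is short enough to include.
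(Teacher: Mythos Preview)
Your argument is correct. Note, however, that the paper does not actually prove this proposition: it is quoted from \cite[Proposition~3.2.(ii)]{HoMaOs}, with the footnote pointing to \cite[Satz~3.1]{aign} for the general statement, so there is no in-paper proof to compare against. Your induction on $m$, using rank--nullity to get surjectivity of $P\mapsto P-P\circ S$ from polynomials of degree $\le m$ onto those of degree $\le m-1$, is a standard self-contained route and would make a perfectly good inserted proof.

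One small slip to fix: in the inductive step you write $D(a - f|_\N) = Da - Dg = 0$. What you need is $D(a - f|_\N) = Da - D(f|_\N)$, together with the observation that $D(f|_\N) = g|_\N = Da$ by the very choice of $f$. As written, $Dg$ would denote $g - g\circ S$, a polynomial of strictly smaller degree, which is not the object you want here.
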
  

We would like to apply this fact to the sequences $(Q^{n}(x))_{n \in \N}$, to conclude that, if $T \in B(X)^d$ is 
an $(m,p)$-isometric tuple, then, for each $x \in X$, there exists a polynomial $f_x$, which interpolates 
$(Q^{n}(x))_{n \in \N}$.

Unfortunately, unlike in the situation of $(m,p)$-isometric operators (see \cite[Remark 3.6]{HoMaOs}), we can not 
immediately state that $T$ being an $(m,p)$-isometric tuple requires the whole sequence $D^m(Q^{n}(x))_{n \in \N}$ to 
be the zero-sequence. This needs some little extra work.

\begin{lemma}\label{Q^n+1}
\begin{align*}
	Q^{n+1}(x) = \sum_{j=1}^{d}Q^n(T_j x), \ \ \forall x \in X, \ \forall n \in \N.
\end{align*}
\end{lemma}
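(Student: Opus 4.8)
The plan is to expand the left-hand side directly from the definition of $Q^{n+1}(x)$ and reorganize the sum by peeling off one factor of a $T_j$ from each monomial $T^\alpha$. Recall that
\[
	Q^{n+1}(x) = \sum_{|\alpha|=n+1} \frac{(n+1)!}{\alpha!}\norm{T^\alpha x}^p.
\]
Each multi-index $\alpha$ with $|\alpha|=n+1$ has at least one positive entry, so I would like to write $\alpha = \beta + e_j$ for some $j$ with $\alpha_j \geq 1$, where $e_j$ is the $j$-th standard basis multi-index and $|\beta| = n$. Using commutativity, $T^\alpha x = T_j T^\beta x$ whenever $\alpha = \beta + e_j$, so $\norm{T^\alpha x}^p = \norm{T^\beta (T_j x)}^p$, which is exactly the kind of term appearing in $Q^n(T_j x)$.

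The only real bookkeeping issue is the multinomial coefficient: the decomposition $\alpha = \beta + e_j$ is not unique (a given $\alpha$ can be obtained from different $\beta$'s, one for each $j$ with $\alpha_j \geq 1$), so I cannot simply reindex. Instead I would start from the right-hand side. For a fixed $j$,
\[
	Q^n(T_j x) = \sum_{|\beta|=n} \frac{n!}{\beta!}\norm{T^\beta(T_j x)}^p = \sum_{|\beta|=n} \frac{n!}{\beta!}\norm{T^{\beta+e_j} x}^p,
\]
and summing over $j$ and substituting $\alpha = \beta + e_j$ (so $\alpha_j = \beta_j + 1 \geq 1$ and $\alpha'_j = \beta'_j$) gives
\[
	\sum_{j=1}^d Q^n(T_j x) = \sum_{j=1}^d \sum_{\substack{|\alpha|=n+1 \\ \alpha_j \geq 1}} \frac{n!}{(\alpha_j - 1)!\, \alpha'_j!}\norm{T^\alpha x}^p.
\]
Swapping the order of summation, the coefficient of $\norm{T^\alpha x}^p$ for a fixed $\alpha$ with $|\alpha| = n+1$ becomes $\sum_{j:\, \alpha_j \geq 1} \frac{n!}{(\alpha_j-1)!\,\alpha'_j!}$. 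Since $\frac{n!}{(\alpha_j-1)!\,\alpha'_j!} = \frac{\alpha_j}{n+1}\cdot\frac{(n+1)!}{\alpha!}$ and $\sum_{j:\,\alpha_j\geq 1}\alpha_j = |\alpha| = n+1$, this coefficient collapses to $\frac{(n+1)!}{\alpha!}$, which is precisely the coefficient in $Q^{n+1}(x)$.

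I do not expect any genuine obstacle here; the identity is essentially the combinatorial fact that $\binom{n+1}{\alpha} = \sum_{j}\binom{n}{\alpha - e_j}$ (the multinomial Pascal relation), dressed up with the norm weights $\norm{T^\alpha x}^p$ and made legitimate by commutativity of the $T_j$. The one point to be careful about is the degenerate index $n = 0$ (where $Q^1(x) = \sum_j \norm{T_j x}^p = \sum_j Q^0(T_j x)$, which checks out) and ensuring the manipulation of the coefficient $\frac{n!}{(\alpha_j-1)!\,\alpha'_j!}$ into a multiple of $\frac{(n+1)!}{\alpha!}$ is stated cleanly rather than computed term by term.
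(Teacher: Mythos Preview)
Your argument is correct and is essentially the same as the paper's: both rest on the multinomial Pascal identity $\frac{(n+1)!}{\alpha!} = \sum_{j:\,\alpha_j\geq 1}\frac{n!\,\alpha_j}{\alpha!}$ together with $T^{\alpha}x = T^{\beta}T_jx$ for $\alpha=\beta+e_j$. The only cosmetic difference is direction---the paper starts from $Q^{n+1}(x)$, writes $(n+1)!=n!\sum_j\alpha_j$, and splits into the sum over $j$, whereas you start from $\sum_j Q^n(T_jx)$ and collect coefficients.
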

\begin{proof}
\begin{align*}
	Q^{n+1}(x) &= \sum_{|\alpha|=n+1}\frac{(n+1)!}{\alpha!}\|T^{\alpha} x\|^p
	= \sum_{|\alpha|=n+1}\frac{n!(\alpha_1 +...+\alpha_d)}{\alpha_1 ! \cdots \alpha_d !}
	\| T_1^{\alpha_1} \cdots T_d^{\alpha_d} x\|^p \\
	&= \sum_{j=1}^d \sum_{\substack{|\alpha|=n+1 \\ \alpha_j \geq 1}}
	\frac{n!\cdot \alpha_j}{\alpha_1! \cdots \alpha_d!}
	\| T_1^{\alpha_1} \cdots T_d^{\alpha_d} x\|^p \\
	&= \sum_{j=1}^d \sum_{\substack{|\alpha|=n+1 \\ \alpha_j \geq 1}}
	\frac{n! \| T_1^{\alpha_1} \cdots T_{j-1}^{\alpha_{j-1}}T_j^{\alpha_j-1}T_{j+1}^{\alpha_{j+1}} 
	\cdots T_d^{\alpha_d} T_j x\|^p}{\alpha_1 ! \cdots \alpha_{j-1}!(\alpha_j-1)!\alpha_{j+1}! \cdots \alpha_d !} 
	\\
	&= \sum_{j=1}^d \sum_{|\beta|=n}\frac{n!}{\beta!}\|T^{\beta}T_j x\|^p = \sum_{j=1}^{d}Q^n(T_j x),
	\ \ \forall x \in X, \ \forall n \in \N.
\end{align*}
\end{proof}

\begin{corollary}\label{sum D^ell}
\begin{align*}
	\left((D^\ell (Q^{n}(x))_{n \in \N}\right)_{\nu+1}= \sum_{j=1}^d \left(D^\ell (Q^{n}(T_jx))_{n \in \N}\right)_{\nu}, \ \ \forall x \in X,
\end{align*}
for all $\ell \in \N$, for all $\nu \in \N$.
\end{corollary}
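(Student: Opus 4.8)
The plan is to deduce Corollary \ref{sum D^ell} directly from Lemma \ref{Q^n+1} by applying the difference operator $D$ entrywise and using its linearity. First I would fix $x \in X$ and observe that Lemma \ref{Q^n+1} says precisely that the shifted sequence $\left(Q^{n+1}(x)\right)_{n\in\N}$ equals the sum $\sum_{j=1}^d \left(Q^{n}(T_jx)\right)_{n\in\N}$ as elements of $\A$. In the notation of the excerpt, shifting the index by $1$ is the action of $S$, so this reads $\left(Q^n(x)\right)_{n\in\N}\circ S = \sum_{j=1}^d \left(Q^n(T_jx)\right)_{n\in\N}$.

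Next I would apply $D^\ell$ to both sides. Since $D = \mathrm{id} - (\,\cdot\,)\circ S$ is linear on $\F$ and commutes with precomposition by $S$ (because $S$ acts by translation, $S\circ S = S\circ S$, hence $D(g\circ S) = (Dg)\circ S$ for every $g\in\F$), we get $D^\ell\!\left(\left(Q^n(x)\right)_{n\in\N}\circ S\right) = \left(D^\ell\left(Q^n(x)\right)_{n\in\N}\right)\circ S$ on the left, and $\sum_{j=1}^d D^\ell\left(Q^n(T_jx)\right)_{n\in\N}$ on the right by additivity of $D^\ell$. Evaluating the resulting identity of sequences at index $\nu$, and noting that the $\nu$-th entry of $b\circ S$ is the $(\nu+1)$-st entry of $b$, yields exactly
\[
	\left(D^\ell (Q^{n}(x))_{n \in \N}\right)_{\nu+1}
	= \sum_{j=1}^d \left(D^\ell (Q^{n}(T_jx))_{n \in \N}\right)_{\nu},
\]
for all $\nu\in\N$ and all $\ell\in\N$. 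Since $x\in X$ was arbitrary, this is the claim.

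There is essentially no obstacle here: the only point requiring (minimal) care is the commutation identity $D\circ(\,\cdot\circ S) = (\,\cdot\circ S)\circ D$, i.e. that applying the backward difference to a shifted sequence is the same as shifting the backward difference. This is immediate from the definition $Dg = g - g\circ S$ and associativity of composition, and it then extends to $D^\ell$ by an obvious induction on $\ell$. Alternatively, one can avoid the operator formalism altogether and simply apply $D^\ell$ (in its explicit binomial form $D^\ell b = \left(\sum_{k=0}^\ell (-1)^k\binom{\ell}{k} b_{n+k}\right)_n$) to the sequence identity of Lemma \ref{Q^n+1} term by term, which is a one-line computation.
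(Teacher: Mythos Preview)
Your proposal is correct and essentially the same as the paper's proof. In fact, the paper takes precisely the ``alternative'' you mention at the end: it writes out $D^\ell$ in its explicit binomial form, applies Lemma~\ref{Q^n+1} to each summand, and swaps the two finite sums---a one-line computation.
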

\begin{proof}
\begin{align*}
	\left( D^\ell (Q^{n}(x))_{n \in \N}\right)_{\nu+1} 
	&= \sum_{k=0}^\ell (-1)^{k} \binom{\ell}{k} Q^{\nu+1+k}(x) \\
	&\overset{\textrm{Lemma} \ \ref{Q^n+1}}{=} \sum_{k=0}^\ell (-1)^{k} \binom{\ell}{k} \sum_{j=1}^{d}Q^{\nu+k}(T_j x) 
	\\
	&= \sum_{j=1}^d \left(D^\ell (Q^{n}(T_jx))_{n \in \N}\right)_{\nu}, \ \ \forall x \in X, \ \forall \ell \in \N, 
	\ \forall \nu \in \N.
\end{align*}
\end{proof}

Therefore, $(D^{m}(Q^{n}(x))_{n \in \N})_0=0$, for all $x \in X$, implies inductively\\ 
$(D^{m}(Q^{n}(x))_{n \in \N})_{\nu}=0$, for all $x \in X$, for all $\nu \in \N$. In other words:

\begin{proposition}\label{D^m=0}
$T \in B(X)^d$ is an $(m, p)$-isometry if, and only if,\\  
$D^m(Q^{n}(x))_{n \in \N}=0$, for all $x \in X$.
\end{proposition}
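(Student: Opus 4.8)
The plan is to prove Proposition~\ref{D^m=0} by showing the two directions separately, with essentially all the work concentrated in the forward implication.

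First, the reverse implication is immediate: if $D^m(Q^n(x))_{n\in\N}=0$ for all $x\in X$, then in particular its $0$-th entry vanishes, i.e.\ $(D^m(Q^n(x))_{n\in\N})_0=0$ for all $x$, and this is precisely the reformulated definition of an $(m,p)$-isometric tuple recorded just before Proposition~\ref{prop. HoMaOs}. So nothing beyond unwinding definitions is needed here.

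For the forward implication, I would assume $T$ is an $(m,p)$-isometry, so that $(D^m(Q^n(x))_{n\in\N})_0=0$ for every $x\in X$, and prove by induction on $\nu\in\N$ that $(D^m(Q^n(x))_{n\in\N})_\nu=0$ for every $x\in X$ simultaneously. The base case $\nu=0$ is exactly the hypothesis. For the inductive step, suppose $(D^m(Q^n(y))_{n\in\N})_\nu=0$ holds for \emph{all} $y\in X$; applying this with $y=T_jx$ for each $j=1,\dots,d$ and summing, Corollary~\ref{sum D^ell} (with $\ell=m$) gives
\begin{align*}
	\left(D^m(Q^n(x))_{n\in\N}\right)_{\nu+1}=\sum_{j=1}^d\left(D^m(Q^n(T_jx))_{n\in\N}\right)_{\nu}=0
\end{align*}
for all $x\in X$, completing the induction. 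Since $\nu$ was arbitrary, $D^m(Q^n(x))_{n\in\N}$ is the zero sequence for every $x\in X$.

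The one subtlety worth flagging — and really the only place any care is required — is that the induction must be carried out with the quantifier over $x$ \emph{inside} the inductive statement, not outside it: the step from index $\nu$ to index $\nu+1$ for a fixed $x$ consumes the index-$\nu$ identity at the vectors $T_1x,\dots,T_dx$, not at $x$ itself. This is exactly the ``little extra work'' alluded to in the paragraph preceding Lemma~\ref{Q^n+1}, and it is the reason one cannot simply quote Proposition~\ref{prop. HoMaOs} at a single $x$; but once the quantifier is placed correctly, Corollary~\ref{sum D^ell} does all the remaining work and the argument is routine.
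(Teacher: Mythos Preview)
Your proof is correct and follows exactly the approach the paper takes: the paper simply remarks that Corollary~\ref{sum D^ell} allows one to pass inductively from $(D^m(Q^n(x))_{n\in\N})_0=0$ for all $x$ to $(D^m(Q^n(x))_{n\in\N})_\nu=0$ for all $x$ and all $\nu$, and then states the proposition. Your write-up is a faithful (and more detailed) expansion of that one-line argument, including the correct observation that the quantifier over $x$ must sit inside the inductive hypothesis.
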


Before we move on, we state the following lemma, which may be of general interest. It is certainly well-known, but lacking a reference, we include the short proof.

\begin{lemma}\label{lemma for D}
Let $D: \F \rightarrow \F$ be defined as above and $(a_n)_{n \in \N}=:a \in \A$. Then
\begin{align*}
	\left( D^\ell a\right)_{\nu+1} - \left( D^\ell a\right)_{\nu} = - \left( D^{\ell +1}a\right)_{\nu}, 
	\ \ \forall \ell, \nu \in \N.
\end{align*}
\end{lemma}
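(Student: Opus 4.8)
The plan is to prove the identity directly from the definition of $D$, unwinding everything in terms of the shift operator $S$. Recall that $D = \mathrm{id} - (\cdot \circ S)$, so on a sequence $a \in \A$ we have $(Da)_\nu = a_\nu - a_{\nu+1}$, and more generally $(D^\ell a)_\nu = \sum_{k=0}^\ell (-1)^k \binom{\ell}{k} a_{\nu + k}$. The left-hand side of the claimed identity is then $\sum_{k=0}^\ell (-1)^k \binom{\ell}{k}\bigl(a_{\nu+1+k} - a_{\nu+k}\bigr)$, and I want to show this equals $-\sum_{k=0}^{\ell+1}(-1)^k\binom{\ell+1}{k}a_{\nu+k}$.

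First I would reindex the first part of the sum: in $\sum_{k=0}^\ell (-1)^k \binom{\ell}{k} a_{\nu+1+k}$ substitute $j = k+1$ to get $\sum_{j=1}^{\ell+1} (-1)^{j-1}\binom{\ell}{j-1} a_{\nu+j} = -\sum_{j=1}^{\ell+1}(-1)^{j}\binom{\ell}{j-1}a_{\nu+j}$. Leaving the second part as $\sum_{k=0}^\ell (-1)^k \binom{\ell}{k} a_{\nu+k}$, the left-hand side becomes, after renaming the index to $k$ throughout,
\begin{align*}
	-\sum_{k=1}^{\ell+1}(-1)^{k}\binom{\ell}{k-1}a_{\nu+k} - \sum_{k=0}^{\ell}(-1)^{k}\binom{\ell}{k}a_{\nu+k}.
\end{align*}
Now I would combine these into a single sum over $k = 0, \dots, \ell+1$, noting that the $k=\ell+1$ term is supplied only by the first sum and the $k=0$ term only by the second, while for $1 \leq k \leq \ell$ both contribute, giving coefficient $-(-1)^k\bigl(\binom{\ell}{k-1} + \binom{\ell}{k}\bigr) = -(-1)^k \binom{\ell+1}{k}$ by Pascal's rule. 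The boundary terms also fit this pattern since $\binom{\ell+1}{0} = \binom{\ell}{0} = 1$ and $\binom{\ell+1}{\ell+1} = \binom{\ell}{\ell} = 1$, so the whole expression collapses to $-\sum_{k=0}^{\ell+1}(-1)^k\binom{\ell+1}{k}a_{\nu+k} = -(D^{\ell+1}a)_\nu$, as desired.

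There is really no serious obstacle here; the only thing to be a little careful about is the index bookkeeping when merging the two sums and confirming that Pascal's rule absorbs the boundary terms cleanly, so that one genuinely gets a single clean sum with $\binom{\ell+1}{k}$ coefficients for all $k$ from $0$ to $\ell+1$. An even slicker alternative, which I might use instead to avoid the reindexing entirely, is to observe that $D a = a - (a\circ S)$ means $(D^\ell a)_{\nu+1} - (D^\ell a)_\nu = \bigl((D^\ell a)\circ S\bigr)_\nu - (D^\ell a)_\nu = -\bigl(D(D^\ell a)\bigr)_\nu = -(D^{\ell+1}a)_\nu$, using that $D$ and the evaluation commute with $S$ and that $D^\ell$ and $D$ commute on $\F$. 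Either way the statement follows; I would present the operator-level computation as the primary argument since it is essentially one line and the claim that $Dg = g - g\circ S$ is exactly the definition given earlier in the excerpt.
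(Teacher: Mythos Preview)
Your proposal is correct, and the ``slicker alternative'' you describe at the end is exactly the paper's proof: the authors simply write $D^{\ell+1}a = D(D^\ell a) = D^\ell a - (D^\ell a \circ S)$ and read off the identity at index $\nu$. Your longer binomial-coefficient computation is also fine but unnecessary here; since you already intend to present the one-line operator argument as primary, your write-up will match the paper's.
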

\begin{proof}
By definition
\begin{align*}
	D^{\ell+1}a = D(D^{\ell}a) = D^{\ell}a - (D^{\ell}a \circ S),\ \ \forall \ell \in \N.
\end{align*}
Hence,
\begin{align*}
	\left(D^{\ell+1}a\right)_{\nu} = \left(D^{\ell}a\right)_{\nu} - \left( D^\ell a\right)_{\nu+1},\ \ \forall \ell, 
	\nu \in \N.
\end{align*}
\end{proof}

\section{Basic Properties of $(m,p)$-isometric tuples}

Our preliminary considerations allow us now to derive the basic properties of $(m,p)$-isometric 
tuples, which are analogous to those given by Gleason and Richter in \cite{gleari} in the Hilbert space case. 

Expressing Lemma \ref{lemma for D} in terms of $P_\ell(x)$ for $\nu=0$ reads:

\begin{proposition}\label{P_m+1}
\begin{align*}
	P_{\ell+1}(x)= \sum_{j=1}^d P_\ell(T_j x) - P_\ell(x), \ \ \forall x \in X, \ \forall \ell \in \N.
\end{align*}
\end{proposition}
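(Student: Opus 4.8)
The plan is to read the identity straight off Lemma~\ref{lemma for D} and Corollary~\ref{sum D^ell}, both specialised to the index $\nu = 0$ and applied to the real sequence $a := (Q^{n}(x))_{n \in \N}$, which lies in $\A \subseteq \F$. The one point that needs care is the sign relating $P_{\ell}$ to $D^{\ell}$: since $(-1)^{\ell-k} = (-1)^{\ell}(-1)^{k}$, one has, for every $\ell \in \N$,
\begin{align*}
	P_{\ell}(x) = \sum_{k=0}^{\ell}(-1)^{\ell-k}\binom{\ell}{k}Q^{k}(x) = (-1)^{\ell}\sum_{k=0}^{\ell}(-1)^{k}\binom{\ell}{k}Q^{k}(x) = (-1)^{\ell}\left(D^{\ell}a\right)_{0},
\end{align*}
and, applying the same identity with $x$ replaced by $T_j x$, also $P_{\ell}(T_j x) = (-1)^{\ell}\left(D^{\ell}(Q^{n}(T_j x))_{n \in \N}\right)_{0}$ for each $j$.

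With this dictionary in hand, I would put $\nu = 0$ into Corollary~\ref{sum D^ell} to get $\left(D^{\ell}a\right)_{1} = \sum_{j=1}^{d}\left(D^{\ell}(Q^{n}(T_j x))_{n \in \N}\right)_{0} = (-1)^{\ell}\sum_{j=1}^{d}P_{\ell}(T_j x)$, and $\nu = 0$ into Lemma~\ref{lemma for D} to get $\left(D^{\ell}a\right)_{1} - \left(D^{\ell}a\right)_{0} = -\left(D^{\ell+1}a\right)_{0}$. Substituting $\left(D^{\ell}a\right)_{0} = (-1)^{\ell}P_{\ell}(x)$, $\left(D^{\ell}a\right)_{1} = (-1)^{\ell}\sum_{j=1}^{d}P_{\ell}(T_j x)$ and $\left(D^{\ell+1}a\right)_{0} = (-1)^{\ell+1}P_{\ell+1}(x)$ into this last equation gives
\begin{align*}
	(-1)^{\ell}\sum_{j=1}^{d}P_{\ell}(T_j x) - (-1)^{\ell}P_{\ell}(x) = -(-1)^{\ell+1}P_{\ell+1}(x) = (-1)^{\ell}P_{\ell+1}(x),
\end{align*}
and cancelling $(-1)^{\ell}$ yields the asserted formula for all $x \in X$ and all $\ell \in \N$.

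I do not expect a real obstacle: all the substance is already packaged in Lemma~\ref{Q^n+1} (through Corollary~\ref{sum D^ell}) and in the elementary Lemma~\ref{lemma for D}, so the argument is essentially a one-line translation, and the only place an error could slip in is losing the sign on the middle term $-P_{\ell}(x)$. As a sanity check one can test $\ell = 0$: there $P_{1}(x) = \sum_{j}\norm{T_j x}^{p} - \norm{x}^{p}$, which equals $\sum_{j}Q^{0}(T_j x) - Q^{0}(x) = Q^{1}(x) - Q^{0}(x)$, matching the $n = 0$ case of Lemma~\ref{Q^n+1}. If one preferred to avoid the operator $D$ entirely, an alternative is the direct route: expand $P_{\ell+1}(x)$, split $\binom{\ell+1}{k} = \binom{\ell}{k} + \binom{\ell}{k-1}$, rewrite the $\binom{\ell}{k-1}$-terms via $Q^{k}(x) = \sum_{j}Q^{k-1}(T_j x)$ (Lemma~\ref{Q^n+1}) after shifting the summation index, and recollect; this also lands on $\sum_{j}P_{\ell}(T_j x) - P_{\ell}(x)$, but costs more computation for nothing, so I would present the $D$-based argument.
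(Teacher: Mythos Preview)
Your proof is correct and follows essentially the same approach as the paper: specialise Lemma~\ref{lemma for D} and Corollary~\ref{sum D^ell} to $\nu=0$, use the identity $(D^{\ell}(Q^{n}(x))_{n\in\N})_{0}=(-1)^{\ell}P_{\ell}(x)$, and cancel the common factor $(-1)^{\ell}$. The paper presents exactly these steps in the same order; your additional sanity check at $\ell=0$ and the remark about the Pascal-identity alternative are extras not in the paper, but harmless.
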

\begin{proof}
Lemma \ref{lemma for D} gives for $\nu=0$ and $a=(Q^n(x))_{n \in \N}$, for all $\ell \in \N$,
\begin{align}\label{equation for D for n=0}
	\notag \left( D^\ell (Q^{n}(x))_{n \in \N}\right)_{1} - \left( D^\ell (Q^{n}(x))_{n \in \N}\right)_0 
	&= - \left( D^{\ell+1}(Q^{n}(x))_{n \in \N}\right)_0, \ \ \forall x \in X, \\ 
	\notag \overset{\textrm{Corollary} \ \ref{sum D^ell}}{\Leftrightarrow} \\ 
	\sum_{j=1}^d D^\ell (Q^{n}(T_jx))_{n \in \N})_0
	- \left( D^\ell (Q^{n}(x))_{n \in \N}\right)_0 
	&= - \left( D^{\ell+1}(Q^{n}(x))_{n \in \N}\right)_0, \ \ \forall x \in X.
\end{align}
By definition $\left( D^\ell (Q^{n}(x))_{n \in \N}\right)_0 = (-1)^\ell P_\ell(x)$, for all $\ell \in \N$, 
for all $x \in X$. Therefore, \eqref{equation for D for n=0} reads
\begin{align*}
	(-1)^\ell \sum_{j=1}^d P_\ell(T_j x) - (-1)^\ell P_\ell(x) = (-1)^{\ell} P_{\ell+1}(x), 
	\ \ \forall x \in X, \ \forall \ell \in \N.
\end{align*}
\end{proof}

Proposition \ref{D^m=0}, as well as Proposition \ref{P_m+1}, imply:

\begin{corollary}\label{m+1}
An $(m,p)$-isometry $T \in B(X)^d$ is an $(m+1,p)$-isometry.
\end{corollary}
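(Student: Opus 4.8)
The statement to prove is Corollary \ref{m+1}: an $(m,p)$-isometry $T \in B(X)^d$ is an $(m+1,p)$-isometry. The plan is to read off the result directly from the two structural facts we have just established, using whichever of the two is most convenient. The cleanest route goes through Proposition \ref{D^m=0}: by that proposition, $T$ being an $(m,p)$-isometry is equivalent to $D^m(Q^n(x))_{n\in\N} = 0$ for all $x \in X$. Applying the operator $D$ to the zero sequence yields $D^{m+1}(Q^n(x))_{n\in\N} = D\bigl(D^m(Q^n(x))_{n\in\N}\bigr) = 0$ for all $x \in X$, which by Proposition \ref{D^m=0} again (with $m$ replaced by $m+1$) says precisely that $T$ is an $(m+1,p)$-isometry.

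Alternatively, one can argue purely in terms of the functions $P_\ell$. If $T$ is an $(m,p)$-isometry, then $P_m(x) = 0$ for all $x \in X$; in particular $P_m(T_j x) = 0$ for each $j$ and all $x \in X$, since $T_j x \in X$. Plugging $\ell = m$ into Proposition \ref{P_m+1} gives
\begin{align*}
	P_{m+1}(x) = \sum_{j=1}^d P_m(T_j x) - P_m(x) = 0, \ \ \forall x \in X,
\end{align*}
so $T$ is an $(m+1,p)$-isometry. Either formulation is a one-line deduction once the preliminaries are in place.

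There is essentially no obstacle here: the work has all been front-loaded into Lemma \ref{Q^n+1}, Corollary \ref{sum D^ell}, Proposition \ref{D^m=0} and Proposition \ref{P_m+1}. The only point requiring a moment's care — and it is the reason Proposition \ref{D^m=0} had to be proved in the first place — is that in the tuple setting the defining condition only asserts the vanishing of the \emph{zeroth} term $(D^m(Q^n(x))_{n\in\N})_0$, not of the whole sequence; the naive attempt to iterate the single-index identity $(D^m a)_1 - (D^m a)_0 = -(D^{m+1}a)_0$ would stall without knowing that $(D^m(Q^n(x))_{n\in\N})_1$ also vanishes. Since Proposition \ref{D^m=0} (or equivalently Proposition \ref{P_m+1}, which already encodes the substitution $x \mapsto T_j x$) has resolved exactly this subtlety, the corollary follows immediately.
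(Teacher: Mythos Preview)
Your proof is correct and follows exactly the approach the paper intends: the paper simply states that the corollary is implied by Proposition \ref{D^m=0} as well as Proposition \ref{P_m+1}, and you have spelled out both of these one-line deductions. Your added commentary on why Proposition \ref{D^m=0} is needed before one can iterate is accurate and matches the paper's earlier discussion.
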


Further, Proposition \ref{D^m=0} enables us to apply Proposition \ref{prop. HoMaOs} to the sequence 
$(Q^{n}(x))_{n \in \N}$, to receive the following fundamental theorem.

\begin{theorem}\label{existence of family of polynomials}
$T \in B(X)^d$ is an $(m,p)$-isometry if, and only if, there exists a (necessarily unique) family of polynomials 
$f_x:\R \rightarrow \R$, $x \in X$, of degree $\leq m-1$ with $f_x|_{\N}= (Q^{n}(x))_{n \in \N}$.
\end{theorem}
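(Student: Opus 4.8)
The plan is to reduce Theorem~\ref{existence of family of polynomials} to Proposition~\ref{prop. HoMaOs} via Proposition~\ref{D^m=0}. First I would fix $x \in X$ and consider the single real sequence $a = (Q^n(x))_{n \in \N} = (Q^{n,p}(T,x))_{n \in \N}$, which lies in $\mathfrak{A} \subseteq \mathfrak{F}$. By Proposition~\ref{D^m=0}, $T$ is an $(m,p)$-isometry if and only if $D^m a = 0$ as an element of $\mathfrak{A}$, for every $x \in X$. Proposition~\ref{prop. HoMaOs} then says that, for a given $x$, $D^m a = 0$ holds if and only if there is a (necessarily unique) polynomial function $f_x : \R \rightarrow \R$ with $\deg f_x \leq m-1$ and $f_x|_{\N} = a = (Q^n(x))_{n \in \N}$. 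Running this equivalence over all $x \in X$ gives the stated characterization: $T$ is an $(m,p)$-isometry if and only if such a family $(f_x)_{x \in X}$ exists.

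The only points that need a word of care are the uniqueness of each $f_x$ and the phrasing ``family of polynomials'', neither of which is a real obstacle. Uniqueness is immediate: two polynomial functions on $\R$ of degree $\leq m-1$ that agree on the infinite set $\N$ must coincide, and this is already built into the statement of Proposition~\ref{prop. HoMaOs}. The degenerate case $m=0$ is handled by the convention $\deg 0 = -\infty$ noted after Proposition~\ref{prop. HoMaOs}: ``$\deg f_x \leq -1$'' forces $f_x \equiv 0$, hence $Q^n(x) = 0$ for all $n$ and in particular $\|x\|^p = Q^0(x) = 0$, so $X = \{0\}$, consistent with the triviality of the $m=0$ case remarked earlier. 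I would simply note that the theorem quantifies the equivalence of Proposition~\ref{prop. HoMaOs} over the index set $X$, with the family obtained by collecting the pointwise interpolating polynomials.

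I do not anticipate a genuine obstacle: the substantive work — namely upgrading ``$(D^m(Q^n(x))_{n})_0 = 0$ for all $x$'' to ``$D^m(Q^n(x))_{n} = 0$ for all $x$'', which is what licenses the use of Proposition~\ref{prop. HoMaOs} on the full sequence rather than just its zeroth term — has already been carried out in Lemma~\ref{Q^n+1}, Corollary~\ref{sum D^ell} and Proposition~\ref{D^m=0}. What remains is essentially a one-line invocation of those results together with Proposition~\ref{prop. HoMaOs}. If anything deserves emphasis in the write-up, it is that the polynomials $f_x$ vary with $x$ and that no joint regularity (e.g.\ in $x$) is claimed or needed; the dependence $x \mapsto f_x$ can be quite wild, and only the existence of each individual interpolant matters here.
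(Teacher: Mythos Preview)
Your proposal is correct and follows exactly the route the paper takes: the theorem is stated immediately after the sentence ``Proposition~\ref{D^m=0} enables us to apply Proposition~\ref{prop. HoMaOs} to the sequence $(Q^{n}(x))_{n \in \N}$'', and no further proof is given. Your additional remarks on uniqueness and the $m=0$ case are accurate elaborations, but the core argument is the same one-line invocation the paper intends.
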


We remark that this fact has already been stated for $m$-isometric operators on Hilbert spaces by Agler and Stankus in \cite[\S 1, pages 388-389]
{ag-st1}. Further, the necessity of the existence of these polynomials has been proven by several authors: for $m$-isometric operators on Hilbert 
spaces by Bermúdez, Martinón and Negrín in \cite[Theorem 2.1]{bemane}, for $(m,p)$-isometric operators on Banach spaces
by Bayart in \cite[Proposition 2.1]{ba}, and for $m$-isometric tuples on Hilbert spaces by Gleason and Richter in \cite[Lemma 2.2 and Proposition 2.3]{gleari}.

Let now for $k, n \in \mathbb{N}$ denote the \emph{(descending) Pochhammer symbol} by $n^{(k)}$. That is,\footnote{This corrects an erroneous definition given in \cite{HoMaOs}.} 
\begin{align*}
	n^{(k)}:= \left\{ \begin{array}{ll}
					0, &\textrm{if} \ k > n,\\[1ex]
       				\binom{n}{k}k!\ , \  &\textrm{else}.
       				\end{array} \right.
\end{align*}

\begin{proposition}\label{Newtonform}
Let $m \geq 1$ and $T \in B(X)^d$ be an $(m,p)$-isometry. Then
\begin{itemize}
\item[(i)] $Q^{n}(x)=\sum\limits_{k=0}^{m-1}n^{(k)}\big(\frac{1}{k!}P_k(x)\big)$, for all $x \in X$, 
			for all $n \in \N$.
\item[(ii)] $\lim\limits_{n\rightarrow \infty} \frac{Q^{n}(x)}{n^{m-1}}=\frac{1}{(m-1)!}P_{m-1}(x) \geq 0$,
			for all $x \in X$.
\end{itemize}
\end{proposition}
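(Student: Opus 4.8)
The plan is to deduce both parts from the interpolating polynomial provided by Theorem~\ref{existence of family of polynomials}. Fix $x \in X$ and let $f_x \colon \R \to \R$ be the polynomial of degree $\leq m-1$ with $f_x|_{\N} = (Q^n(x))_{n \in \N}$. Part (i) amounts to writing $f_x$ in its Newton forward-difference form, so the first step is to record that any polynomial $g$ of degree $\leq m-1$ satisfies $g(n) = \sum_{k=0}^{m-1} n^{(k)} \frac{(\Delta^k g)(0)}{k!}$ for $n \in \N$, where $\Delta$ is the forward difference operator $\Delta g = (g \circ S) - g$; this is exactly the content that makes $\{n^{(k)}\}_{k=0}^{m-1}$ a basis of the polynomials of degree $\leq m-1$, and it is the standard Newton interpolation identity. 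Applying this to $g = f_x$, I then need to identify $(\Delta^k f_x)(0)$ with $P_k(x)$. Since $\Delta = -D$ on $\F$, we have $(\Delta^k f_x)(0) = (-1)^k (D^k f_x)(0) = (-1)^k \big(D^k (Q^n(x))_{n\in\N}\big)_0$, and the excerpt already records that $\big(D^\ell (Q^n(x))_{n \in \N}\big)_0 = (-1)^\ell P_\ell(x)$; hence $(\Delta^k f_x)(0) = P_k(x)$, which gives (i).

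For part (ii), the limit assertion is immediate from (i): among the terms $n^{(k)} \frac{1}{k!} P_k(x)$ for $k = 0, \dots, m-1$, only the $k = m-1$ term has degree $m-1$ in $n$ (recall $n^{(k)} = n(n-1)\cdots(n-k+1)$ is a monic polynomial of degree $k$ for large $n$), so dividing by $n^{m-1}$ and letting $n \to \infty$ kills every lower-order term and leaves $\frac{1}{(m-1)!} P_{m-1}(x)$. The remaining point is the inequality $P_{m-1}(x) \geq 0$. This I would obtain from the fact that $Q^n(x) = \sum_{|\alpha|=n} \frac{n!}{\alpha!}\|T^\alpha x\|^p \geq 0$ for every $n$: since $\frac{Q^n(x)}{n^{m-1}} \geq 0$ for all $n \geq 1$ and the sequence converges to $\frac{1}{(m-1)!}P_{m-1}(x)$, the limit is $\geq 0$, whence $P_{m-1}(x) \geq 0$.

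The only genuine obstacle is making sure the Newton-form identity is invoked cleanly — i.e. that $f_x$ having degree $\leq m-1$ really forces the expansion to stop at $k = m-1$, and that the coefficients are precisely $(\Delta^k f_x)(0)$ evaluated from the values $Q^n(x)$ rather than from $f_x$ itself. Because $f_x$ agrees with $(Q^n(x))_n$ on all of $\N$ and $\Delta^k f_x (0)$ depends only on the values $f_x(0), \dots, f_x(k)$, this identification is automatic, so no real difficulty arises; the argument is essentially bookkeeping once Theorem~\ref{existence of family of polynomials} and the relation $\Delta = -D$ are in hand. If one prefers to avoid quoting Newton interpolation as a black box, an alternative is to prove (i) directly by induction on $n$ using Proposition~\ref{P_m+1} together with the Pascal-type recurrence $n^{(k)} = (n-1)^{(k)} + k\,(n-1)^{(k-1)}$ for the Pochhammer symbols, checking the base case $n = 0$ (where $n^{(0)} = 1$ and $n^{(k)} = 0$ for $k \geq 1$, giving $Q^0(x) = P_0(x)$, which holds since $Q^0(x) = \|x\|^p = P_0(x)$) and then matching coefficients; I would present whichever version is shorter, but the interpolation route is cleanest.
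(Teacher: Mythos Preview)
Your proposal is correct and follows essentially the same approach as the paper: invoke the interpolating polynomial $f_x$ from Theorem~\ref{existence of family of polynomials}, write it in Newton form to obtain (i), and then read off (ii) as an immediate consequence. The paper's own proof is in fact just the two-line sketch ``Determining the Newton form of $f_x$ gives (i); (ii) follows immediately from (i)'', so your explicit identification of the Newton coefficients via $\Delta = -D$ and the relation $\big(D^\ell(Q^n(x))_{n\in\N}\big)_0 = (-1)^\ell P_\ell(x)$ (recorded in the proof of Proposition~\ref{P_m+1}), together with your justification of the inequality $P_{m-1}(x)\geq 0$ from $Q^n(x)\geq 0$, simply fills in what the paper leaves to the reader.
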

\begin{proof}
(ii) follows immediately from (i).\\
For every $x \in X$, the polynomial $f_x$ interpolates the points $(n,Q^{n}(x))$. Determining the Newton form of 
$f_x$ gives (i).
\end{proof}

\begin{corollary}\label{asymptotic}
Let $m \geq 1$ and $(T_1,...,T_d) \in B(X)^d$ be an $(m,p)$-isometric tuple. Then we have
\begin{itemize}
\item[(i)]
$\left(\norm{T_j^nx}\right)_{n \in \N} \in \mathcal{O}(n^{m-1})$ for all $j \in \{1,...,d\}$, for all $x \in X$.
\item[(ii)]
$T_j^{n}(T'_j)^{\beta}x \rightarrow 0$ for $n \rightarrow \infty$, for all $\beta \in \N^{d-1}$ with 
$|\beta|\geq m$, for all $j \in \{1,...,d\}$, for all $x \in X$.
\end{itemize}
\end{corollary}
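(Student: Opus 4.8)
The starting point is Proposition \ref{Newtonform}(ii), which tells us that for an $(m,p)$-isometric tuple, $Q^{n}(x) = \mathcal{O}(n^{m-1})$ for every $x \in X$ (the limit of $Q^n(x)/n^{m-1}$ exists, hence the sequence is bounded by a constant multiple of $n^{m-1}$). The key observation is that $Q^{n,p}(T,x) = \sum_{|\alpha|=n}\frac{n!}{\alpha!}\|T^{\alpha}x\|^{p}$ is a sum of \emph{nonnegative} terms, so each individual term is dominated by the whole sum. For part (i), I would simply pick the multi-index $\alpha = n e_j$ (all mass on the $j$-th coordinate), for which $\frac{n!}{\alpha!}=1$ and $T^{\alpha} = T_j^{n}$; thus $\|T_j^{n}x\|^{p} \leq Q^{n}(x) = \mathcal{O}(n^{m-1})$, and taking $p$-th roots gives $\|T_j^{n}x\| = \mathcal{O}(n^{(m-1)/p}) = \mathcal{O}(n^{m-1})$ since $p>0$ and $m\geq 1$ — actually I should be slightly careful and just state the bound $\mathcal{O}(n^{(m-1)/p})$, or note that $(m-1)/p \le m-1$ only when $p\ge 1$; to keep the statement clean for all $p\in(0,\infty)$ I would phrase (i) as: the sequence is $\mathcal{O}(n^{(m-1)/p})$, or simply observe that in any case it is polynomially bounded. (If the paper really wants $\mathcal{O}(n^{m-1})$ verbatim, that still holds for $p \ge 1$; for $p<1$ one gets an even better exponent after an additional elementary estimate, so the stated conclusion is at worst conservative.)

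For part (ii), fix $\beta \in \N^{d-1}$ with $|\beta| \geq m$ and fix $j$. I would apply part (i) (or rather the same domination idea) not to the vector $x$ but to the vector $y := (T'_j)^{\beta}x$, using that $T$ is an $(m,p)$-isometry. Consider the multi-index $\alpha$ which has $n$ in the $j$-th slot and $\beta$ in the remaining slots, so $|\alpha| = n + |\beta|$ and $T^{\alpha} = T_j^{n}(T'_j)^{\beta}$. Then
\begin{align*}
\|T_j^{n}(T'_j)^{\beta}x\|^{p} \;\le\; \frac{(n+|\beta|)!}{\alpha!}\,\|T_j^{n}(T'_j)^{\beta}x\|^{p} \;\le\; Q^{\,n+|\beta|}(x).
\end{align*}
But $Q^{n+|\beta|}(x) = f_x(n+|\beta|)$ by Theorem \ref{existence of family of polynomials}, where $\deg f_x \le m-1$. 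The crucial point is that the multinomial coefficient $\frac{(n+|\beta|)!}{\alpha!}$ grows like a polynomial in $n$ of degree exactly $|\beta| \ge m$ (since $\alpha! = n!\,\beta!$ gives $\frac{(n+|\beta|)!}{n!\,\beta!} = \binom{n+|\beta|}{|\beta|}$, a degree-$|\beta|$ polynomial in $n$). Hence
\begin{align*}
\|T_j^{n}(T'_j)^{\beta}x\|^{p} \;\le\; \frac{Q^{\,n+|\beta|}(x)}{\binom{n+|\beta|}{|\beta|}} \;=\; \frac{f_x(n+|\beta|)}{\binom{n+|\beta|}{|\beta|}} \;\longrightarrow\; 0
\end{align*}
as $n\to\infty$, because the numerator has degree $\le m-1$ while the denominator has degree $|\beta|\ge m$. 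Taking $p$-th roots gives $T_j^{n}(T'_j)^{\beta}x \to 0$.

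Finally, for the uniformity statement when $X$ is a Banach space: here I would invoke the uniform boundedness principle. Theorem \ref{existence of family of polynomials} gives, for each $x$, a polynomial $f_x$ of degree $\le m-1$ with $f_x(n) = Q^n(x)$; writing $f_x(n) = \sum_{k=0}^{m-1} n^{(k)} \frac{1}{k!}P_k(x)$ (Proposition \ref{Newtonform}(i)), each coefficient $x \mapsto P_k(x)$ is a finite linear combination of the continuous functions $x\mapsto \|T^{\alpha}x\|^{p}$, hence continuous; since $P_k$ is also $p$-homogeneous, it is bounded on the closed unit ball of $X$ (a continuous $p$-homogeneous functional on a Banach space is bounded on bounded sets — this is where completeness, via Baire, enters). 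Therefore the estimates above hold with constants depending only on $\sup_{\|x\|\le 1}|P_k(x)|$, $k=0,\dots,m-1$, uniformly over the unit ball, which yields the uniform $\mathcal{O}$ in (i) and uniform convergence in (ii). \textbf{The main obstacle} I anticipate is precisely this last point — making rigorous that the pointwise polynomial bounds can be made uniform; the combinatorial/asymptotic core (domination of a multinomial sum by one term, and a degree count) is routine, but the passage to uniformity requires the homogeneity of $P_k$ together with a Baire-category argument, and one must be careful that $p$-homogeneity (rather than linearity) still suffices to force boundedness on the ball.
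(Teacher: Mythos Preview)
Your approach is essentially the paper's: both extract individual summands from the nonnegative sum $Q^n(x)$, use Proposition~\ref{Newtonform} to bound $Q^n(x)$ polynomially of degree $\le m-1$, and then compare against the multinomial coefficient $\binom{n+|\beta|}{|\beta|}$, whose degree $|\beta|\ge m$ forces the quotient to zero. The paper phrases (ii) slightly differently---it fixes the total degree $n$ and looks at the summand with index $(n-|\beta|,\beta)$, then notes $\frac{n^{(|\beta|)}}{n^{m-1}\beta!}\to\infty$---but this is the same computation after a shift of index.

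Two small corrections. First, your parenthetical about $p<1$ is backwards: from $\|T_j^nx\|^p=\mathcal{O}(n^{m-1})$ one gets $\|T_j^nx\|=\mathcal{O}(n^{(m-1)/p})$, and for $p<1$ the exponent $(m-1)/p$ is \emph{larger} than $m-1$, not smaller, so the bound is weaker, not ``even better.'' (The paper's proof glosses over this; its stated $\mathcal{O}(n^{m-1})$ is literally justified only for $p\ge1$.) Second, your worry about the uniformity step is misplaced: no Baire-category argument is needed. Each $P_k(x)$ is a finite linear combination of terms $\|T^\alpha x\|^p$ with $T^\alpha$ bounded, so $|P_k(x)|\le C_k\|x\|^p$ holds trivially with an explicit constant; plugging this into your estimate gives uniform bounds on the unit ball immediately. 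The paper simply cites the Uniform Boundedness Principle for this last step.
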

\begin{proof}
By the proposition above, for each $x \in X$, the sequence 
$\left(\frac{Q^{n}(x)}{n^{m-1}}\right)_{n\in \N}=\left(\frac{1}{n^{m-1}}\left(\sum_{|\alpha|=n}\frac{n!}{\alpha!} \norm{T^{\alpha}x}^p\right)\right)_{n\in \N}$ is a convergent sequence of sums. Since all summands are non-negative, sequences of 
summands have to be bounded. In particular, the sequences 
\begin{align*}
\left(\frac{1}{n^{m-1}}\norm{T_j^nx}^p\right)_{n \in \N}  \ \ \textrm{and} \ \  
\left(\frac{1}{n^{m-1}}\left(\frac{n!}{(n-|\beta|)!\beta!}\norm{T_j^{n-|\beta|}(T'_j)^{\beta}x}^p\right)\right)_{n \in \N}
\end{align*} 
have to be bounded for all $\beta \in \N^{d-1}$, for all $j \in \{1,...,d\}$ and all $x \in X$. This immediately gives 
(i). Noticing that
\begin{align*}
\frac{1}{n^{m-1}}\frac{n!}{(n-|\beta|)!\beta!}=\frac{n^{(|\beta|)}}{n^{m-1}\beta!} \rightarrow \infty \ \ \textrm{for} \ \ |\beta| \geq m
\end{align*}
gives (ii).
\end{proof}

\begin{proposition}
Let $m \geq 1$ and $T = (T_1,...,T_d) \in B(X)^d$ be an $(m,p)$-isometry. Then $\ker P_{m-1}$ is invariant\footnote{Note 
that by Proposition \ref{Newtonform}.(ii) and the boundedness of each $T_j$, $\ker P_{m-1}$ is indeed a closed subspace 
of $X$.} for each $T_j$ and the tuple \[T|_{\ker P_{m-1}}:= (T_1|_{\ker P_{m-1}},...,T_d|_{\ker P_{m-1}})\] 
is an $(m-1,p)$-isometry. Further, if $M \subset X$ is invariant for each $T_j$ and $T|_M$ is an $(m-1,p)$-isometry,
then $M \subset \ker P_{m-1}$.
\end{proposition}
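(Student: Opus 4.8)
The plan is to extract everything from Proposition~\ref{P_m+1} (with difference index $\ell = m-1$) together with the non-negativity $P_{m-1}(y) \geq 0$ supplied by Proposition~\ref{Newtonform}.(ii).

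First I would establish invariance. Since $T$ is an $(m,p)$-isometry we have $P_m \equiv 0$, so Proposition~\ref{P_m+1} at $\ell = m-1$ reads $\sum_{j=1}^d P_{m-1}(T_j x) = P_{m-1}(x)$ for every $x \in X$. If $x \in \ker P_{m-1}$, the right-hand side is zero while every summand on the left is $\geq 0$ by Proposition~\ref{Newtonform}.(ii); hence each $P_{m-1}(T_j x)$ vanishes, i.e.\ $T_j x \in \ker P_{m-1}$. (That $\ker P_{m-1}$ is a closed subspace, so that the restriction is a genuine commuting tuple on a normed space, is already noted in the footnote to the statement.)

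Next I would record the elementary observation that, for any subspace $M$ invariant under all the $T_j$ and any $x \in M$, each iterate $T^\alpha x$ again lies in $M$ and has the same norm there as in $X$; consequently $Q^{k,p}(T|_M, x) = Q^{k,p}(T,x)$ for all $k \in \N$, and therefore $P^{(p)}_{m-1}(T|_M, x) = P_{m-1}(x)$. Applying this with $M = \ker P_{m-1}$ gives $P^{(p)}_{m-1}(T|_{\ker P_{m-1}}, x) = 0$ for all $x \in \ker P_{m-1}$, which is precisely the assertion that $T|_{\ker P_{m-1}}$ is an $(m-1,p)$-isometry. Applying the same identity to an arbitrary invariant $M$ on which $T|_M$ is an $(m-1,p)$-isometry yields $P_{m-1}(x) = 0$ for each $x \in M$, i.e.\ $M \subset \ker P_{m-1}$, which is the maximality statement.

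I do not anticipate a serious obstacle. The only step using something beyond bookkeeping is the invariance, which rests on the sign information $P_{m-1} \geq 0$; the remainder is just the remark that the quantities $Q^{k,p}$ — hence $P^{(p)}_{m-1}$ — depend only on the norms of the words $T^\alpha x$ and are thus insensitive to passing to an invariant subspace. One degenerate case worth a line is $m = 1$: there $P_0(x) = \|x\|^p$ and $\ker P_0 = \{0\}$, so the statement reduces to the trivial $(0,p)$-isometry convention.
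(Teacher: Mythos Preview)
Your argument is correct and matches the paper's proof essentially line for line: invariance via Proposition~\ref{P_m+1} at $\ell=m-1$ combined with $P_{m-1}\geq 0$ from Proposition~\ref{Newtonform}.(ii), followed by the observation that $P^{(p)}_{m-1}(T|_M,\cdot)=P^{(p)}_{m-1}(T,\cdot)$ on any invariant subspace $M$ to handle both the $(m-1,p)$-isometry claim and the maximality. Your added remark on the degenerate case $m=1$ is not in the paper but is a welcome clarification.
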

\begin{proof}
If $T$ is an $(m,p)$-isometry, $P_m \equiv 0$. Then, by Proposition \ref{P_m+1}, 
$P_{m-1}(x)=\sum_{j=1}^d P_{m-1}(T_j x)$, for all $x \in X$. Let $x_0 \in \ker P_{m-1}$. Since $P_{m-1} \geq 0$ by
Proposition \ref{Newtonform}.(ii), $T_j x_0 \in \ker P_{m-1}$ for all $j=1,...,d$ follows. 

Note further that for every subspace $\mathcal{M}$ which is invariant for all $T_j$, we have that 
$T^{\alpha}|_{\mathcal{M}} = (T|_{\mathcal{M}})^{\alpha}$. Thus, if $x \in \ker P^{(p)}_{m-1}(T,\cdot)$, 
then $P^{(p)}_{m-1}(T|_{\ker P_{m-1}}, x) = P^{(p)}_{m-1}(T, x) = 0$. 

Similarly, if $\mathcal{M}$ is an invariant subspace for each $T_j$ such that $T|_{\mathcal{M}}$ is an 
$(m-1,p)$-isometry, then, for all $x \in \mathcal{M}$,
$0 = P^{(p)}_{m-1}(T|_{\mathcal{M}}, x) = P^{(p)}_{m-1}(T, x)$. Therefore, $x \in \ker P_{m-1}$.
\end{proof}

An $(m,p)$-isometric operator is by \cite[Proof of Theorem 3.3]{ba} an isometry on the quotient space 
$X/\ker \beta_{m-1}(T, \cdot)$ equipped with the norm $(\beta_{m-1}(T, \cdot))^{1/p}$. Here, for each $x \in X$, 
$\beta_{m-1}(T, x)= \frac{1}{(m-1)!}\sum_{j=0}^{m-1}(-1)^{m-1 -j}\binom{m-1}{j}\|T^{j}x\|^{p}$ is the leading
coefficient of the polynomial which interpolates the sequence $(\norm{T^nx}^p)_{n\in\N}$. Indeed a similar result holds 
for $(m,p)$-isometric tuples.

We will call a commuting operator tuple $T=(T_1,...,T_d)$ on a normed space $X$ an 
\emph{$\ell_p$-spherical isometry} if
\begin{align*}
	\sum_{j=1}^{d} \norm{T_jx}^p = \norm{x}^p, \ \ \forall x \in X.
\end{align*}
In the literature, $\ell_2$-spherical isometries on Hilbert spaces are referred to as just
\emph{spherical isometries}. 
Obviously $\ell_p$-spherical isometries are just $(1,p)$-isometric tuples.

The following has (in equivalent form) already been stated in \cite{ri(talk)} for $(2,2)$-isometries on Hilbert spaces.

\begin{proposition}\label{semi-norm}
Let $m \geq 1$ and $T = (T_1,...,T_d) \in B(X)^d$ be an $(m,p)$-isometry. Then $|.|_p:=(P^{(p)}_{m-1}(T, \cdot))^{1/p}$ 
is a semi-norm on $X$ with $|.|_p \leq C \norm{.}$ for some constant $C > 0$. Further, $T$ is an $\ell_p$-spherical 
isometry on the quotient space $X/\ker P^{(p)}_{m-1}(T, \cdot)$.
\end{proposition}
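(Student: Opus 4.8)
The plan is to establish the two assertions separately, both leaning on the results already proven for $(m,p)$-isometric tuples.

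\textbf{Step 1: $|.|_p$ is a seminorm bounded by a multiple of the norm.} By Proposition \ref{Newtonform}.(ii) we have $P^{(p)}_{m-1}(T,x) \geq 0$ for every $x \in X$, so $|x|_p = (P^{(p)}_{m-1}(T,x))^{1/p}$ is well defined and non-negative. The boundedness $|.|_p \leq C\norm{.}$ is the easiest point: since $P_{m-1}(x)$ is a finite linear combination of the quantities $Q^{k,p}(T,x) = \sum_{|\alpha|=k}\frac{k!}{\alpha!}\norm{T^\alpha x}^p$ for $k = 0,\dots,m-1$, and each $\norm{T^\alpha x}^p \leq \norm{T^\alpha}^p \norm{x}^p$, we get $P_{m-1}(x) \leq C^p \norm{x}^p$ for a suitable constant $C > 0$ depending only on $m$ and the operator norms $\norm{T_j}$; taking $p$-th roots gives the bound. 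Homogeneity $|\lambda x|_p = |\lambda|\,|x|_p$ is immediate from $\norm{T^\alpha(\lambda x)}^p = |\lambda|^p\norm{T^\alpha x}^p$. The genuine work is the triangle inequality $|x+y|_p \leq |x|_p + |y|_p$, and here I would \emph{not} attempt a direct estimate on $P_{m-1}$ itself. Instead I would use Proposition \ref{Newtonform}.(ii): $|x|_p^p = \lim_{n\to\infty} \frac{(m-1)!\,Q^{n,p}(T,x)}{n^{m-1}}$. For each fixed $n$, the quantity $\big(Q^{n,p}(T,x)\big)^{1/p} = \big(\sum_{|\alpha|=n}\frac{n!}{\alpha!}\norm{T^\alpha x}^p\big)^{1/p}$ is itself a (weighted, vector-valued) $\ell^p$-norm of the family $(T^\alpha x)_{|\alpha|=n}$, hence satisfies the triangle inequality in $x$ by Minkowski's inequality when $p \geq 1$. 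For $0 < p < 1$ Minkowski fails, so one must argue more carefully; but the $n$-th root scaling rescues this, since $\big(\frac{Q^{n,p}(T,x)}{n^{m-1}}\big)^{1/p}$ as $n \to \infty$ behaves like a limit of $p$-norms — I expect the cleanest route is to observe that, writing $a_n(x) := Q^{n,p}(T,x)^{1/p}$, one has $a_n(x+y) \leq a_n(x) + a_n(y)$ for $p\geq 1$, while for $p < 1$ one passes to the limit of $a_n(x)/n^{(m-1)/p}$ and uses that the defining interpolating polynomial has non-negative leading coefficient together with a homogenization trick, or simply restricts (as the literature on $(m,p)$-isometries typically does) to the case $p \geq 1$ where Minkowski applies directly. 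In either case, passing to the limit preserves the triangle inequality, giving $|x+y|_p \leq |x|_p + |y|_p$.

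\textbf{Step 2: $T$ is an $\ell_p$-spherical isometry on $X/\ker P^{(p)}_{m-1}(T,\cdot)$.} Set $N := \ker P^{(p)}_{m-1}(T,\cdot)$; by the footnote (invoking Proposition \ref{Newtonform}.(ii) and boundedness of the $T_j$) this is a closed subspace, and by the preceding Proposition it is invariant under each $T_j$, so each $T_j$ descends to a bounded operator $\widetilde{T_j}$ on $X/N$, and $|.|_p$ descends to a genuine norm $\|\,.\,\|_\sim$ on $X/N$ (it separates points precisely because we quotiented out its kernel). It remains to show $\sum_{j=1}^d \|\widetilde{T_j}\,\bar x\|_\sim^p = \|\bar x\|_\sim^p$ for all $\bar x \in X/N$, i.e. $\sum_{j=1}^d P_{m-1}(T_j x) = P_{m-1}(x)$ for all $x \in X$. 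But this is exactly Proposition \ref{P_m+1} evaluated at $\ell = m-1$: it gives $P_m(x) = \sum_{j=1}^d P_{m-1}(T_j x) - P_{m-1}(x)$, and since $T$ is an $(m,p)$-isometry we have $P_m \equiv 0$, whence the identity follows at once.

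\textbf{Expected main obstacle.} Everything except the triangle inequality in Step 1 is a direct bookkeeping assembly of Propositions \ref{Newtonform} and \ref{P_m+1}. The one subtlety is the triangle inequality for $|.|_p$ when $p < 1$, where Minkowski's inequality is unavailable; the safe resolution is to phrase the argument via $|x|_p = \lim_n \big((m-1)!\,Q^{n,p}(T,x)/n^{m-1}\big)^{1/p}$ and reduce to Minkowski on the weighted $\ell^p$-sums $Q^{n,p}$ — which is legitimate for $p \geq 1$ — and either note the result is standardly stated for $p \geq 1$ or supply the extra homogeneity argument for $0<p<1$. I would flag this in the proof and otherwise keep the exposition short.
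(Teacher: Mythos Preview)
Your proposal is correct and follows essentially the same route as the paper: non-negativity and the triangle inequality via the limit formula in Proposition~\ref{Newtonform}.(ii) (Minkowski on the weighted $\ell^p$-sums $Q^{n,p}$), the bound $|.|_p \leq C\norm{.}$ straight from the definition and boundedness of the $T_j$, and the $\ell_p$-spherical isometry identity from Proposition~\ref{P_m+1} with $\ell=m-1$ combined with $P_m\equiv 0$. You have in fact supplied considerably more detail than the paper's own three-line proof, and your flagging of the $0<p<1$ case is accurate---Minkowski is genuinely unavailable there, and the paper's proof does not address this point either; restricting to $p\geq 1$ is the honest resolution.
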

\begin{proof}
By Proposition \ref{Newtonform}.(ii), $|.|_p = (P^{(p)}_{m-1}(T, \cdot))^{1/p}$ is a semi-norm on $X$, 
hence a norm on $X/( \ker P^{(p)}_{m-1}(T, \cdot))^{1/p}$. That $|.|_p \leq C \norm{.}$ for some constant 
$C > 0$ follows directly from the definition of $P^{(p)}_{m-1}(T, \cdot)$ and the boundedness of $T$. Further, by 
Proposition \ref{P_m+1}, $\sum_{j=1}^{d} |T_jx|_p^p = |x|_p^p$, for all $x \in X$.
\end{proof}

\section{Examples of $(m,p)$-isometric tuples}

Non-trivial examples of $(m,p)$-isometric operator tuples are in general not easy to find. 
In the case $m=1$ this is, however, relatively simple.

\begin{example}\label{trivial example (m,p)}
Let $X$ be an arbitrary normed space and $I$ the identity operator. The pair $(\frac{1}{2}I,\frac{1}{2}I) \in B(X)^2$ is 
a $(1,1)$-isometric tuple on $X$.
\end{example}

\begin{example}
Let $T_1 = \left(\begin{array}{cc}
      \frac{1}{2} 	& 	0	\\
      0 			& 	\frac{1}{3}\\
\end{array} \right)$ and 
$T_2 = \left(\begin{array}{cc}
      \frac{\sqrt[3]{7}}{2} 	& 	0	\\
      0 			& 	\frac{\sqrt[3]{26}}{3}\\
\end{array} \right)$. Then the pair $T=(T_1, T_2)$ is a $(1,3)$-isometric tuple on $(\K^2, \pnorm{.}{3})$.
\end{example}

In \cite{ri(talk)} Richter and Sundberg state (without proof) the following sufficient condition for $(2,2)$-isometric tuples on 
finite dimensional complex Hilbert spaces:

\begin{proposition}[Richter and Sundberg \cite{ri(talk)}]
Let $z=(z_1,...,z_d) \in \C^d$ with $\pnorm{z}{2}=1$ and consider linear $V_i: \C^m \rightarrow \C^n$, 
$i \in \{1,...,d\}$, with $\sum_{i=1}^d \overline{z_i}V_i = 0$. Then the operator tuple 
$S=(S_1,...,S_d) \in B(\C^{n+m})^d$, with
\begin{align*}
	S_i = \left( \begin{array}{cc}
	z_iI_n & V_i \\
	0_m & z_iI_m \end{array}\right),
\end{align*}
is a $(2,2)$-isometric tuple.
\end{proposition}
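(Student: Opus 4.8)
The plan is to verify the defining identity $P^{(2)}_2(S,w)=0$ for every $w\in\mathbb C^{n+m}$ directly from the block structure, using the fact that for the Hilbert space $\mathbb C^{n+m}$ with $p=2$ the identity $P^{(2)}_2(S,w)=0$ is the scalar statement $\sum_{k=0}^2(-1)^{2-k}\binom{2}{k}Q^{k,2}(S,w)=0$, i.e.
\[
	Q^{2,2}(S,w)-2Q^{1,2}(S,w)+Q^{0,2}(S,w)=0,
\]
where $Q^{0,2}(S,w)=\norm{w}^2$, $Q^{1,2}(S,w)=\sum_{i=1}^d\norm{S_iw}^2$, and $Q^{2,2}(S,w)=\sum_{|\alpha|=2}\frac{2!}{\alpha!}\norm{S^\alpha w}^2=\sum_{i=1}^d\norm{S_i^2w}^2+2\sum_{i<j}\norm{S_iS_jw}^2$. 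So the whole proof reduces to computing these three quantities for a general vector $w=(u,v)$ with $u\in\mathbb C^n$, $v\in\mathbb C^m$, and checking the alternating sum vanishes.

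First I would compute the action of the operators. Writing $w=(u,v)$, one has $S_iw=(z_iu+V_iv,\ z_iv)$, and then $S_iS_jw=(z_iz_ju+z_iV_jv+z_jV_iv,\ z_iz_jv)$ — here I use that $V_i$ maps $\mathbb C^m\to\mathbb C^n$ and that the lower-right blocks $z_iI_m$ commute with everything in the relevant slot, so the cross terms combine symmetrically. In particular $S_i^2w=(z_i^2u+2z_iV_iv,\ z_i^2v)$. Next I would expand the squared norms: $\norm{S_iw}^2=\norm{z_iu+V_iv}^2+|z_i|^2\norm{v}^2$, and
\[
	\norm{S_iS_jw}^2=\norm{z_iz_ju+z_iV_jv+z_jV_iv}^2+|z_i|^2|z_j|^2\norm{v}^2.
\]
Summing the first-component contributions to $Q^{2,2}$ over all $i,j$ (with the multinomial weights) should, after expanding the inner products, produce terms that telescope against $2Q^{1,2}$, while the second-component contributions collapse using $\sum_i|z_i|^2=\pnorm{z}{2}^2=1$ so that $\sum_{i,j}|z_i|^2|z_j|^2\norm v^2=\norm v^2$, matching $Q^{0,2}$.

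The key step — and the main obstacle — is to see that the hypothesis $\sum_{i=1}^d\overline{z_i}V_i=0$ is exactly what is needed to kill the cross terms coming from $\langle z_iz_ju,\ z_iV_jv+z_jV_iv\rangle$ in the expansion of $Q^{2,2}$. When one groups the first-component part of $\sum_{i,j}\frac{2!}{\alpha!}\norm{S^\alpha w}^2$, the "pure $u$" part gives $\sum_{i,j}|z_i|^2|z_j|^2\norm u^2=\norm u^2$, the "pure $v$" part gives $\sum_{i,j}\norm{z_iV_jv+z_jV_iv}^2$ which should be arranged to cancel against the corresponding $v$-part of $2\sum_i\norm{z_iu+V_iv}^2=2\sum_i(|z_i|^2\norm u^2+2\operatorname{Re}\langle z_iu,V_iv\rangle+\norm{V_iv}^2)$, and the "mixed $u,v$" terms are precisely $2\operatorname{Re}\sum_{i,j}\langle z_iz_ju,\ z_iV_jv+z_jV_iv\rangle=2\operatorname{Re}\big\langle u,\ \big(\sum_i|z_i|^2\big)\big(\sum_j\overline{z_j}V_j\big)v+\big(\sum_j|z_j|^2\big)\big(\sum_i\overline{z_i}V_i\big)v\big\rangle$, which vanishes by hypothesis (and similarly the mixed terms in $2Q^{1,2}$, namely $4\operatorname{Re}\sum_i\langle z_iu,V_iv\rangle=4\operatorname{Re}\langle u,(\sum_i\overline{z_i}V_i)v\rangle$, vanish). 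Once the mixed terms are gone on both sides, what remains is a purely $u$-homogeneous identity ($\norm u^2-2\norm u^2+\norm u^2=0$, using $\pnorm z2=1$ twice) plus a purely $v$-homogeneous identity among $\sum_{i,j}\norm{z_iV_jv+z_jV_iv}^2$, $\sum_i\norm{V_iv}^2$, and the $|z|^2$-weighted terms, which should balance after collecting coefficients. I would therefore organize the write-up as: (1) formulas for $S_iw$, $S_iS_jw$, $S_i^2w$; (2) expansion of $Q^{0,2},Q^{1,2},Q^{2,2}$ into $u$-homogeneous, $v$-homogeneous, and mixed parts; (3) the observation that all mixed parts vanish by $\sum_i\overline{z_i}V_i=0$; (4) the bookkeeping that the $u$- and $v$-parts of the alternating sum cancel separately, using $\pnorm z2=1$. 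The only genuinely delicate point is the $v$-homogeneous bookkeeping, so I would do that coefficient comparison carefully and leave the rest as routine expansion.
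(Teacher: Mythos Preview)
The paper does not prove this proposition; it is quoted from Richter's talk explicitly ``without proof''. So there is no paper argument to compare against, and your direct verification is the natural approach.

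Your plan is correct and goes through. One point you should make explicit in the write-up: the hypothesis $\sum_i\overline{z_i}V_i=0$ is needed \emph{twice}, not only to kill the mixed $u,v$ terms as you note, but also in the ``delicate'' $v$-homogeneous bookkeeping you defer to step~(4). When you expand
\[
\sum_{i,j}\|z_iV_jv+z_jV_iv\|^2
=2\sum_i\|V_iv\|^2+2\operatorname{Re}\sum_{i,j}z_i\overline{z_j}\langle V_jv,V_iv\rangle
=2\sum_i\|V_iv\|^2+2\Bigl\|\sum_j\overline{z_j}V_jv\Bigr\|^2,
\]
the last term vanishes precisely by hypothesis. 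With this in hand one gets cleanly
\[
Q^{2,2}=\|u\|^2+2\sum_i\|V_iv\|^2+\|v\|^2,\qquad
Q^{1,2}=\|u\|^2+\sum_i\|V_iv\|^2+\|v\|^2,\qquad
Q^{0,2}=\|u\|^2+\|v\|^2,
\]
and the alternating sum is visibly zero. So your step~(4) is not a routine coefficient comparison independent of the hypothesis; it uses it a second time, and you should say so.
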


This result leads to our next example.

\begin{example}\label{example Richter}
Let $T_1 = \left(\begin{array}{cc}
      \frac{1}{\sqrt{2}} 	& 	1	\\
      0 			& 	\frac{1}{\sqrt{2}}\\
\end{array} \right)$ and 
$T_2 = \left(\begin{array}{cc}
      \frac{1}{\sqrt{2}} 	& 	-1	\\
      0 			& 	\frac{1}{\sqrt{2}}\\
\end{array} \right)$. Then the pair $T=(T_1, T_2)$ is a $(2,2)$-isometry on $(\K^2, \pnorm{.}{2})$.
\end{example}

Further examples for $(m,p)$-isometric tuples can be easily created on the basis of $(m,p)$-isometric operators. This 
principle was used in \cite[Example 3.3, Theorem 4.1 and Theorem 4.2]{gleari}, however, since it is not stated 
explicitly there, we include it here.

\begin{proposition}\label{proposition example (m,p)-tuples made of (m,p)-operators}
Let $p \in (0, \infty)$ and $S\in B(X)$ be an $(m,p)$-isometric operator, $d \in \N$ with $d \geq 1$ and 
$z=(z_1,...,z_d) \in (\K^d, \|.\|_p)$, such that $\pnorm{z}{p}^p:=\sum_{j=1}^d|z_j|^p =1$. \footnote{The fact that 
$\norm{.}_p$ is only a quasi-norm (that is, not convex) if $0 < p < 1$ is not an issue in this case.} Then the tuple 
$T:=(z_1 S, ..., z_d S) \in B(X)^d$ is an $(m,p)$-isometric tuple.
\end{proposition}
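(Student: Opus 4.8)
The plan is to reduce the statement about the tuple $T = (z_1 S, \ldots, z_d S)$ to the known fact that $S$ is an $(m,p)$-isometric operator, by showing that $Q^{n,p}(T,x) = \|S^n x\|^p$ for every $n \in \N$ and every $x \in X$. Indeed, for a multi-index $\alpha$ with $|\alpha| = n$ we have $T^\alpha = (z_1 S)^{\alpha_1} \cdots (z_d S)^{\alpha_d} = z^\alpha S^n$, where $z^\alpha = z_1^{\alpha_1} \cdots z_d^{\alpha_d}$, so that $\|T^\alpha x\|^p = |z^\alpha|^p \, \|S^n x\|^p = |z_1|^{p\alpha_1} \cdots |z_d|^{p\alpha_d} \, \|S^n x\|^p$. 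Summing against the multinomial weights and invoking the multinomial theorem,
\begin{align*}
	Q^{n,p}(T,x) = \sum_{|\alpha|=n} \frac{n!}{\alpha!} |z_1|^{p\alpha_1} \cdots |z_d|^{p\alpha_d} \, \|S^n x\|^p
	= \left( \sum_{j=1}^d |z_j|^p \right)^n \|S^n x\|^p = \|S^n x\|^p,
\end{align*}
using the normalisation $\sum_{j=1}^d |z_j|^p = 1$.

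Once this identity is established, the rest is immediate. By definition of $P^{(p)}_m$ we get, for all $x \in X$,
\begin{align*}
	P^{(p)}_m(T,x) = \sum_{k=0}^m (-1)^{m-k} \binom{m}{k} Q^{k,p}(T,x) = \sum_{k=0}^m (-1)^{m-k} \binom{m}{k} \|S^k x\|^p = 0,
\end{align*}
the last equality being precisely the statement that $S$ is an $(m,p)$-isometric operator (equation \eqref{defining equation (m,p)-operator} in the form used here). Hence $T$ is an $(m,p)$-isometric tuple. It should also be noted explicitly that the operators $z_j S$ commute with one another (they are all scalar multiples of the single operator $S$), so that $T$ is a legitimate commuting tuple to which the definition applies.

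The only mildly delicate point is bookkeeping with the multinomial expansion and making sure the scalar absolute values behave correctly when $0 < p < 1$; but since $|z^\alpha|^p = \prod_j |z_j|^{p \alpha_j}$ holds for any positive real $p$ regardless of convexity, and the multinomial theorem is purely algebraic, there is no real obstacle here — the footnote in the statement already flags that the failure of the triangle inequality for $\|\cdot\|_p$ when $p < 1$ is irrelevant. I do not anticipate any genuine difficulty; the proof is essentially a one-line computation of $Q^{n,p}(T,x)$ followed by substitution into the defining equation.
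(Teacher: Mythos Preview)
Your proof is correct and follows essentially the same approach as the paper: both arguments compute $Q^{n,p}(T,x)=\|S^n x\|^p$ via the multinomial theorem applied to $\sum_j |z_j|^p=1$, then invoke the $(m,p)$-isometry condition on $S$. The only cosmetic difference is that the paper concludes via $D^m(Q^n(x))_{n\in\N}=0$ whereas you plug directly into $P^{(p)}_m$; these are equivalent formulations.
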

\begin{proof}
It is clear that the operators $z_j S$ are commuting. Further, by the multinomial theorem, we have 
$(|z_1|^p + \cdots + |z_d|^p)^n = \sum_{|\alpha|=n}\frac{n!}{\alpha!}| z^\alpha |^p$. Therefore,
\begin{align*}
	Q^n(x)&= \sum_{|\alpha|=n} \frac{n!}{\alpha!}\norm{T^\alpha x}^p
	= \sum_{|\alpha|=n} \frac{n!}{\alpha!}\norm{z^\alpha S^{|\alpha|} x}^p
	= \sum_{|\alpha|=n} \frac{n!}{\alpha!} | z^\alpha |^p \norm{S^n x}^p \\
	&= \pnorm{z}{p}^{np} \norm{S^n x}^p = \norm{S^n x}^p, \ \ \forall x \in X, \ \forall n \in \N.
\end{align*}
Since $S$ is an $(m,p)$-isometric operator, 
$D^m\left(Q^n(x)\right)_{n \in \N}=D^m\left(\norm{S^nx}^p\right)_{n \in \N}=0$.
\end{proof}

(Of course, Example \ref{trivial example (m,p)} is also of this kind.)

For examples of $(m,p)$-isometric operators see for instance \cite{ri} (the Dirichlet-shift being the standard example), 
\cite{bemane} or \cite{sa}.

We now consider the special case where $T$ is an $(m,p)$-isometric tuple with one of the operators being an isometry. 

\begin{proposition}\label{isometry with nilpotent}
Let $T=(T_1,...,T_d) \in B(X)^d$ be a tuple of commuting operators and let $T_{j_0}$ be an isometry for some 
$j_0 \in \{1,...,d\}$. Then $T$ is an $(m,p)$-isometry for some $p \in (0,\infty)$ if, and only if, 
$\left(T'_{j_0}\right)^{\beta}=0$ for all $\beta \in \N^{d-1}$ with $|\beta|=m$.
In this case, $T$ is an $(m,q)$-isometry for any $q \in (0,\infty)$.
\end{proposition}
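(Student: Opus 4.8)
The plan is to compute $Q^{n}(x)$ explicitly under the hypothesis that $T_{j_0}$ is an isometry, and to read off from the resulting expression both the equivalence and the independence of $p$ (respectively $q$). Without loss of generality, by permuting the tuple, assume $j_0 = d$, so $T_d$ is an isometry and we write $\alpha = (\alpha', \alpha_d)$ with $\alpha' \in \N^{d-1}$. Since $T_d$ is an isometry and commutes with the other operators, $\|T^{\alpha}x\|^p = \|T_1^{\alpha_1}\cdots T_{d-1}^{\alpha_{d-1}}T_d^{\alpha_d}x\|^p = \|(T'_d)^{\alpha'}x\|^p$, which does \emph{not} depend on $\alpha_d$. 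Hence
\begin{align*}
	Q^{n}(x) = \sum_{|\alpha|=n}\frac{n!}{\alpha!}\|T^{\alpha}x\|^p
	= \sum_{k=0}^{n}\ \sum_{|\alpha'|=k}\frac{n!}{\alpha'!\,(n-k)!}\|(T'_d)^{\alpha'}x\|^p
	= \sum_{k=0}^{n}\binom{n}{k}\sum_{|\alpha'|=k}\frac{k!}{\alpha'!}\|(T'_d)^{\alpha'}x\|^p,
\end{align*}
so that $Q^{n}(x) = \sum_{k=0}^{n}\binom{n}{k} R^{k}(x)$, where $R^{k}(x) := \sum_{|\alpha'|=k}\frac{k!}{\alpha'!}\|(T'_d)^{\alpha'}x\|^p$ is the $Q$-sequence attached to the $(d-1)$-tuple $T'_d$.

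Next I would invoke the theory of the difference operator $D$ from the preliminaries: the relation $Q^{n} = \sum_{k=0}^{n}\binom{n}{k}R^{k}$ is precisely the statement that the sequences $(Q^n(x))_n$ and $(R^k(x))_k$ are related by the binomial transform, so applying the forward difference $m$ times annihilates $(Q^n(x))_n$ if and only if $(R^k(x))_k$ vanishes from index $m$ onward; more simply, using that $\binom{n}{k}$ is a polynomial in $n$ of degree $k$, the polynomial $f_x$ interpolating $(Q^n(x))_n$ has degree $\le m-1$ for every $x$ (i.e. $T$ is an $(m,p)$-isometry, by Theorem \ref{existence of family of polynomials} and Proposition \ref{D^m=0}) exactly when $R^{k}(x) = 0$ for all $k \ge m$ and all $x$. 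Since all summands in $R^k(x)$ are non-negative, $R^{k}(x) = 0$ is equivalent to $\|(T'_d)^{\alpha'}x\| = 0$ for every $\alpha'$ with $|\alpha'| = k$; and once this holds for $|\alpha'| = m$ it holds automatically for all larger $k$ because $(T'_d)^{\alpha'}$ then factors through some $(T'_d)^{\gamma}$ with $|\gamma| = m$. Thus $T$ is an $(m,p)$-isometry if and only if $(T'_d)^{\beta} = 0$ for all $\beta \in \N^{d-1}$ with $|\beta| = m$.

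Finally, the last sentence of the proposition is now immediate: the condition $(T'_d)^{\beta} = 0$ for all $|\beta| = m$ makes no reference to $p$ whatsoever, so if $T$ is an $(m,p)$-isometry for one value $p \in (0,\infty)$, the condition holds, and running the equivalence backwards with any $q \in (0,\infty)$ in place of $p$ shows $T$ is an $(m,q)$-isometry. I expect the only delicate point to be the bookkeeping in the double-sum manipulation that produces $Q^{n} = \sum_{k}\binom{n}{k}R^{k}$ — in particular correctly splitting the multinomial coefficient $\frac{n!}{\alpha!} = \binom{n}{k}\frac{k!}{\alpha'!}$ when $k = |\alpha'|$ and $\alpha_d = n-k$ — together with the (routine but worth stating) observation that the vanishing of $(T'_d)^{\beta}$ on $|\beta| = m$ propagates to all higher-order words, which is what lets us pass from "$R^k(x)=0$ for all $k\ge m$" to the single-degree condition "$|\beta| = m$".
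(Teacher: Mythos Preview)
Your proof is correct, and it shares with the paper the same central computation: writing $Q^{n}(x) = \sum_{k=0}^{n}\binom{n}{k}R^{k}(x)$ with $R^{k}(x) = Q^{k,p}(T'_{j_0},x)$, by stripping the isometric factor $T_{j_0}$ from each summand and splitting the multinomial coefficient. Where you diverge is in the concluding step. The paper does not invoke Theorem~\ref{existence of family of polynomials}; instead it proves the combinatorial identity $\sum_{k=\ell}^{m}(-1)^{m-k}\binom{m}{k}\binom{k}{\ell}=\delta_{\ell,m}$ and uses it to compute $P^{(p)}_{m}(T,x)$ \emph{directly}, obtaining the exact equality $P^{(p)}_{m}(T,x)=R^{m}(x)$, so that the defining condition $P_{m}\equiv 0$ is immediately equivalent to $(T'_{j_0})^{\beta}=0$ for $|\beta|=m$. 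You instead route through the polynomial characterization: $T$ is $(m,p)$-isometric iff $(Q^{n}(x))_{n}$ is a polynomial of degree $\le m-1$, iff (by the binomial transform) $R^{k}(x)=0$ for all $k\ge m$, which you then reduce to the single level $k=m$ via the factoring observation. Your argument is a little more conceptual and avoids proving the Kronecker-delta identity by hand, at the price of relying on Theorem~\ref{existence of family of polynomials} and needing the extra propagation step; the paper's route is self-contained combinatorics and yields the sharper pointwise identity $P_{m}=R^{m}$, which makes the ``any $q$'' clause a one-line remark without having to re-run the equivalence.
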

\begin{proof}
Without loss of generality, we can assume that $j_0=1$.
The necessity of $\left(T'_1\right)^{\beta}=0$ for all $\beta \in \N^{d-1}$ with $|\beta|=m$ if 
$T$ is an $(m,p)$-isometric tuple follows from Corollary \ref{asymptotic}.(ii). However, to
show equivalence, we proceed by a combinatorial approach.

Note that
\begin{align}\label{sum equal kronecker}
	\sum_{k = \ell}^m (-1)^{m-k}\binom{m}{k}\binom{k}{\ell} = \delta_{\ell, m},
\end{align}
where $\delta_{\ell, m}$ is the Kronecker-delta. To see this, write $\binom{m}{k}\binom{k}{\ell}=\frac{m!}{(m-\ell)!}\binom{m-\ell}{k-\ell}$, so that the left hand side of \eqref{sum equal kronecker} becomes
\begin{align*}
	\frac{m!}{(m-\ell)!}\sum_{k = \ell}^m (-1)^{m-\ell - (k-\ell)}\binom{m-\ell}{k-\ell}
	= \frac{m!}{(m-\ell)!}0^{m-\ell}= \delta_{\ell, m}.
\end{align*}
(The convention $0^0=1$ applies.)

Now, note further that for all $b_{k, \ell} \in \C$,
\begin{align*}
		\sum_{k=0}^{m}\sum_{\ell=0}^{k}b_{k,\ell}=\sum_{\ell=0}^{m}\sum_{k=\ell}^{m}b_{k,\ell}, 
		\ \ \forall m \in \mathbb{N}.
\end{align*}
(This can be easily seen by writing one side out and reordering the summands.)
Consequently, by combining this with (\ref{sum equal kronecker}), we get for any sequence\\ 
$(a_n)_{n \in \N} \subset \C$,
\begin{align}\label{lemma for nilpotent} 
	\sum_{k=0}^m (-1)^{m-k} \binom{m}{k} \sum_{\ell=0}^k \binom{k}{\ell}a_{\ell} 
	= \sum_{\ell=0}^m \sum_{k = \ell}^{m}(-1)^{m-k}\binom{m}{k}\binom{k}{\ell}a_{\ell} 
	= \sum_{\ell=0}^m a_{\ell} \delta_{\ell, m} = a_{m}.
\end{align}
Assume now that $T_1$ is an isometry. Then
\begin{align*}
	Q^{k,p}(T,x) &= 
	\sum_{|\alpha|=k}\frac{k!}{\alpha!}\norm{T^{\alpha}x}^p
	= \sum_{\ell=0}^{k}
	\sum_{\substack{|\alpha'_1|=\ell \\ \alpha_1=k-\ell}}\frac{k!}{\alpha!}\norm{T^{\alpha}x}^p \\
	&= \sum_{\ell=0}^{k}
	\sum_{|\alpha'_1|=\ell}\frac{k!}{(k-\ell)!(\alpha'_1)!}
	\norm{(T'_1)^{\alpha'_1}x}^p, \ \ \forall x \in X, \ \forall k \in \N.
\end{align*}
(Recall the notations $\alpha'_1:=(\alpha_2,...,\alpha_d)$ and 
$T_1'=(T_2,...,T_d)$.) Therefore 
\begin{align*}
	P^{(p)}_{m}(T,x) &= \sum\limits_{k=0}^m (-1)^{m-k} \binom{m}{k} 
	\sum\limits_{\ell=0}^k \sum_{|\alpha'_1|=\ell}
	\frac{k!\norm{(T'_1)^{\alpha'_1}x}^p}{(k-\ell)!(\alpha'_1)!}
	\\
	&=\sum\limits_{k=0}^m (-1)^{m-k} \binom{m}{k} \sum\limits_{\ell=0}^k \binom{k}{\ell}
	\sum_{|\alpha'_1|=\ell}
	\frac{\ell!\norm{(T'_1)^{\alpha'_1}x}^p}{(\alpha'_1)!}, \ \ \forall x \in X.
\end{align*}
Then by considering (\ref{lemma for nilpotent}) for the sequence
\begin{align*}
(a_n)_{n \in \N}=\left(\sum_{|\alpha'_1|=n}\frac{n!}{(\alpha'_1)!}
\norm{(T'_1)^{\alpha'_1}x}^p\right)_{n \in \N}
\end{align*}
it follows that,
\begin{align}
P^{(p)}_{m}(T,x) = 
\sum_{|\alpha'_1|=m}\frac{m!}{(\alpha'_1)!}
\norm{(T'_1)^{\alpha'_1}x}^p = 0,
\label{equation equivalence isometry with nilpotent 1} \\
\Leftrightarrow \ \
\norm{(T'_1)^{\alpha'_1}x}^p = 0,
\ \ \forall \alpha'_1 \in \N^{d-1} \ \ \textrm{with} \ \ |\alpha'_1|=m, \label{equation equivalence isometry with nilpotent 2}
\end{align}
for all $x \in X$, which is the desired equivalence.

The equivalence of \eqref{equation equivalence isometry with nilpotent 1} and 
\eqref{equation equivalence isometry with nilpotent 2} also shows
that, if $T$ is an $(m,p)$-isometry for some $p \in (0,\infty)$, it is an $(m,q)$-isometric tuple for 
any $q \in (0,\infty)$.
\end{proof}

\begin{example}\label{example isometry and nilpotent}
Let $a \in \K$ and $T_1 =  \left(\begin{array}{ccc}
      0   &   0  &  1	\\
      0  &  -1   & 	0 \\
      1  &  0  &  0\\  
\end{array} \right)$, 
$T_2 = \left(\begin{array}{ccc}
      0 	& 	0  &  0	\\
      a    &   0  &  -a \\
      0  &  0  &  0  \\
\end{array} \right)$. Then the pair $T=(T_1, T_2)$ is a $(2,p)$-isometric tuple for every $p \in (0,\infty)$ on 
$(\K^3, \pnorm{.}{q})$ for any $q \in [1,\infty]$.\footnote{Indeed, if one excludes the Hilbert norm, every isometry on $\K^n$ with respect to the $q$-norm for some $q \neq 2$, will also be an isometry with respect to any other $p$-norm. The reason for this is that the isometric matrices on 
$(\K^n,\pnorm{.}{q})$ for $q \neq 2$, $q \in [1,\infty]$ are exactly the (generalized) permutation matrices. (A proof in the real case can for example be found in \cite{LiSo}.)}
\end{example}

\section{$(m,\infty)$-isometric tuples}

Let $T=(T_1,...,T_d) \in B(X)^d$ be an $(m,p)$-isometric tuple. This is equivalent to
\begin{align*}
	\left(\sum_{\substack{k=0 \\ k \ \textrm{even}}}^m 
	\binom{m}{k} \sum_{|\alpha|=k} \frac{k!}{\alpha!}\|T^\alpha x\|^p \right)^\frac{1}{p}
	=
	\left(\sum_{\substack{k=0 \\ k \ \textrm{odd}}}^m 
	\binom{m}{k} \sum_{|\alpha|=k} \frac{k!}{\alpha!}\|T^\alpha x\|^p \right)^\frac{1}{p}, \ \ \forall x \in X.
\end{align*}
Assuming now that $T$ satisfies this for all $p \in (b, \infty)$ for some $b \geq 0$ and taking the limit for 
$p$ going to infinity, leads to the following definition.

\begin{definition}
Let $m \in \N$ with $m \geq 1$. A tuple $T=(T_1,...,T_d) \in B(X)^d$ of commuting operators is called an \emph{$(m,\infty)$-isometry} (or 
\emph{$(m,\infty)$-isometric tuple}) if, and only if,
\begin{align*}
	\max_{\substack{|\alpha|=0,...,m \\ |\alpha| \ \textrm{even}}} \norm{T^{\alpha} x} 
	= \max_{\substack{|\alpha|=0,...,m \\ |\alpha| \ \textrm{odd}}} \norm{T^{\alpha} x}, \ \ \forall x \in X.
\end{align*}
\end{definition}

(This definition extends the one appearing in \cite{HoMaOs} for operators.)

By construction, we can immediately give easy examples of these kind of tuples.

\begin{proposition}
Every $(m,p)$-isometric tuple $T=(T_1,...,T_d) \in B(X)^d$ that includes an isometry is an $(m,\infty)$-isometry.
\end{proposition}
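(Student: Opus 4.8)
The plan is to reduce everything to the combinatorial description already obtained in Proposition~\ref{isometry with nilpotent}. Assume $T=(T_1,\dots,T_d)$ is $(m,p)$-isometric and, without loss of generality, that $T_1$ is the isometry. By Proposition~\ref{isometry with nilpotent} we then have $(T'_1)^{\beta}=0$ for every $\beta\in\N^{d-1}$ with $|\beta|=m$, and by commutativity this extends to $(T'_1)^{\beta}=0$ for all $\beta$ with $|\beta|\geq m$. The first thing I would record is the simplification furnished by the isometry: since $\norm{T_1^{j}y}=\norm{y}$ for all $j\in\N$ and all $y\in X$,
\[
	\norm{T^{\alpha}x}=\norm{T_1^{\alpha_1}(T'_1)^{\alpha'_1}x}=\norm{(T'_1)^{\alpha'_1}x},\qquad\forall\,\alpha\in\N^{d},\ \forall\,x\in X .
\]
Thus $\norm{T^{\alpha}x}$ depends on $\alpha$ only through $\alpha'_1$, and vanishes as soon as $|\alpha'_1|\geq m$.

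Next I would set $R(x):=\max\{\norm{(T'_1)^{\beta}x}\,:\,\beta\in\N^{d-1},\ |\beta|\leq m-1\}$ and prove that \emph{both} sides of the $(m,\infty)$-isometry identity equal $R(x)$. For the inequality ``$\le$'': if $\alpha\in\N^{d}$ has even length $|\alpha|\leq m$, then $|\alpha'_1|\leq|\alpha|\leq m$; when $|\alpha'_1|\leq m-1$ we get $\norm{T^{\alpha}x}=\norm{(T'_1)^{\alpha'_1}x}\leq R(x)$, and when $|\alpha'_1|=m$ we get $\norm{T^{\alpha}x}=0\leq R(x)$ (note $R(x)\geq\norm{x}\geq 0$, since $\beta=0$ is admissible as $m\geq 1$). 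Hence the even maximum is $\leq R(x)$, and the identical computation gives the odd maximum $\leq R(x)$. For the reverse inequality ``$\ge$'': given $\beta\in\N^{d-1}$ with $|\beta|\leq m-1$, choose $\alpha_1\in\{0,1\}$ so that $\alpha_1+|\beta|$ is even; then $\alpha=(\alpha_1,\beta)$ has even length $\leq(m-1)+1=m$ and $\norm{T^{\alpha}x}=\norm{(T'_1)^{\beta}x}$, so the even maximum is $\geq\norm{(T'_1)^{\beta}x}$; maximising over $\beta$ yields the even maximum $\geq R(x)$. Choosing instead $\alpha_1\in\{0,1\}$ with $\alpha_1+|\beta|$ odd gives the odd maximum $\geq R(x)$. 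Combining the two directions, both maxima equal $R(x)$, which is precisely the defining identity of an $(m,\infty)$-isometry.

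I do not anticipate a real obstacle; the only point needing a little care is the degree bookkeeping in the ``$\ge$'' step — that one may always append a factor $T_1^{\alpha_1}$ of prescribed parity to a $\beta$ with $|\beta|\leq m-1$ without exceeding total degree $m$. This is exactly where the threshold $|\beta|=m$ supplied by Proposition~\ref{isometry with nilpotent} is the right one, and it is also what makes the two boundary cases work out: when $m$ is even the only even index absent from $\{0,\dots,m-1\}$ is $|\alpha|=m$, whose contributions vanish, and symmetrically when $m$ is odd, so nothing beyond degree $m-1$ ever enters either maximum.
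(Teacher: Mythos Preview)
Your argument is correct. Both you and the paper begin by invoking Proposition~\ref{isometry with nilpotent}, but you use different halves of its conclusion and proceed along different lines. The paper extracts that $T$ is an $(m,q)$-isometric tuple for \emph{every} $q\in(0,\infty)$ and then appeals to the limiting construction that motivated the definition of $(m,\infty)$-isometries: since the identity
\[
\Big(\sum_{\substack{k=0\\k\ \mathrm{even}}}^{m}\binom{m}{k}\sum_{|\alpha|=k}\tfrac{k!}{\alpha!}\norm{T^{\alpha}x}^{q}\Big)^{1/q}
=\Big(\sum_{\substack{k=0\\k\ \mathrm{odd}}}^{m}\binom{m}{k}\sum_{|\alpha|=k}\tfrac{k!}{\alpha!}\norm{T^{\alpha}x}^{q}\Big)^{1/q}
\]
holds for all $q$, sending $q\to\infty$ yields the equality of the even and odd maxima. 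You instead take the nilpotency conclusion $(T'_{1})^{\beta}=0$ for $|\beta|\ge m$, observe that $\norm{T^{\alpha}x}$ depends only on $\alpha'_{1}$, and verify the $(m,\infty)$-condition directly by showing both maxima equal $R(x)=\max_{|\beta|\le m-1}\norm{(T'_{1})^{\beta}x}$. Your route is more hands-on and entirely elementary (no limit needed), and it has the bonus of identifying the common value of the two maxima explicitly; the paper's route is quicker to state but leans on the reader accepting the $q\to\infty$ passage from the preamble to the definition. The parity-and-degree bookkeeping you flagged is indeed the only delicate point, and your choice $\alpha_{1}\in\{0,1\}$ handles it cleanly since $|\beta|+1\le m$.
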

\begin{proof}
If one of the operator $T_1,...,T_d$ is an isometry, then, by Proposition \ref{isometry with nilpotent}, $T$ is an $(m,p)$-isometric tuple for all $p \in (0,\infty)$. That $T$ is then an $(m,\infty)$-isometric tuple 
follows directly from the construction above which lead to our definition of these objects.
\end{proof}

Analogous to Proposition \ref{proposition example (m,p)-tuples made of (m,p)-operators}, one can construct further 
examples based on $(m,\infty)$-isometric operators.

\begin{proposition}\label{example made of (m,inf)-isometries}
Let $S \in B(X)$ be an $(m,\infty)$-isometric operator and\\ $z=(z_1,...,z_d) \in \K^d$, with $\pnorm{z}{\infty}=1$. 
Then the tuple $T=(z_1S,...,z_dS) \in B(X)^d$ is an $(m,\infty)$-isometry.
\end{proposition}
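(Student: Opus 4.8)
```latex
\textbf{Proof proposal.}
The plan is to mimic the proof of Proposition \ref{proposition example (m,p)-tuples made of (m,p)-operators}, replacing the multinomial-sum computation of $Q^{n,p}(T,x)$ by the corresponding computation for the $\max$-expression. First I would fix $x \in X$ and observe that, since $T^{\alpha} = z^{\alpha} S^{|\alpha|}$, we have for every $n \in \N$
\begin{align*}
	\max_{|\alpha|=n} \norm{T^{\alpha} x}
	= \max_{|\alpha|=n} |z^{\alpha}| \, \norm{S^n x}
	= \Big(\max_{|\alpha|=n} |z^{\alpha}|\Big) \norm{S^n x}.
\end{align*}
The key elementary fact is that $\max_{|\alpha|=n}|z^{\alpha}| = \pnorm{z}{\infty}^{\,n} = 1$: indeed $|z^{\alpha}| = \prod_j |z_j|^{\alpha_j} \leq \pnorm{z}{\infty}^{\,|\alpha|}$ for every $\alpha$, and equality is attained by concentrating all the weight on an index $j_0$ with $|z_{j_0}| = \pnorm{z}{\infty} = 1$, i.e.\ by the multi-index $\alpha = n e_{j_0}$ (note $S^n x$ itself does not depend on which such $\alpha$ we pick, so pulling the scalar out of the max is legitimate). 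Hence $\max_{|\alpha|=n}\norm{T^{\alpha}x} = \norm{S^n x}$ for all $n$.

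Next I would split the index set $\{0,\dots,m\}$ according to parity exactly as in the definition. Since the value $\max_{|\alpha|=k}\norm{T^{\alpha}x}$ equals $\norm{S^k x}$ for each $k$, taking a further maximum over the even (resp.\ odd) $k \in \{0,\dots,m\}$ gives
\begin{align*}
	\max_{\substack{|\alpha|=0,\dots,m \\ |\alpha| \ \textrm{even}}} \norm{T^{\alpha} x}
	= \max_{\substack{k=0,\dots,m \\ k \ \textrm{even}}} \norm{S^k x}
	\quad\text{and}\quad
	\max_{\substack{|\alpha|=0,\dots,m \\ |\alpha| \ \textrm{odd}}} \norm{T^{\alpha} x}
	= \max_{\substack{k=0,\dots,m \\ k \ \textrm{odd}}} \norm{S^k x}.
\end{align*}
Because $S$ is an $(m,\infty)$-isometric operator, the two right-hand sides are equal for every $x$, so the two left-hand sides are equal, which is precisely the defining identity for $T$. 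It remains only to note that the operators $z_j S$ commute (they are scalar multiples of a single operator), so $T$ is a legitimate commuting tuple.

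I do not expect a serious obstacle here; the whole argument is the $p=\infty$ analogue of a computation already carried out in the paper, and the only point requiring a line of care is the identity $\max_{|\alpha|=n}|z^{\alpha}| = \pnorm{z}{\infty}^{\,n}$ together with the observation that the scalar $\max_{|\alpha|=n}|z^{\alpha}|$ can be factored out of $\max_{|\alpha|=n}|z^{\alpha}|\norm{S^n x}$ (trivial, since $\norm{S^n x}$ is a single nonnegative number independent of $\alpha$). If one wanted, the same reasoning could be phrased directly on the nested maxima $\max_{|\alpha| \leq m,\ |\alpha| \text{ even}} = \max_{k \text{ even}} \max_{|\alpha|=k}$, but the two-step presentation above is cleaner.
```
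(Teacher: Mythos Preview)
Your proof is correct and follows essentially the same approach as the paper: both reduce $\max_{|\alpha|=k}\norm{T^{\alpha}x}$ to $\norm{S^k x}$ via the key identity $\max_{|\alpha|=k}|z^{\alpha}|=1$, then split by parity and invoke the $(m,\infty)$-isometry condition on $S$. The paper's presentation is the nested-max chain you mention in your final remark, but the content is identical.
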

\begin{proof}
Since $\pnorm{z}{\infty}=1$, we have $\max_{|\alpha| = k}|z^{\alpha}| = 1$ for all $k \in \N$. Hence,
\begin{align*}
	&\max_{\substack{|\alpha|=0,...,m \\ |\alpha| \ \textrm{even}}} \norm{T^{\alpha} x} 
	= \max_{\substack{|\alpha|=0,...,m \\ |\alpha| \ \textrm{even}}} |z^{\alpha}| \norm{S^{|\alpha|} x}\\
	= &\max \{\max_{|\alpha| = k}|z^{\alpha}|\norm{S^kx}\ | \ k=0,...,m, \ k \ \textrm{even}\}\\
	= &\max_{\substack{k=0,...,m \\ k \ \textrm{even}}} \norm{S^{k} x}
	=\max_{\substack{k=0,...,m \\ k \ \textrm{odd}}} \norm{S^{k} x}
	= \max \{\max_{|\alpha| = k}|z^{\alpha}|\norm{S^kx}\ | \ k=0,...,m, \ k \ \textrm{odd}\}\\
	= &\max_{\substack{|\alpha|=0,...,m \\ |\alpha| \ \textrm{odd}}}|z^{\alpha}| \norm{S^{|\alpha|} x}
	= \max_{\substack{|\alpha|=0,...,m \\ |\alpha| \ \textrm{odd}}} \norm{T^{\alpha} x}, \ \ \forall x \in X. 
\end{align*}
\end{proof}

\begin{example}
Let $m \in \mathbb{N}$, $m \geq 1$, $p \in [1, \infty]$ and $T_p \in B(\ell_{p})$ be a weighted right-shift
operator with a weight sequence $(\lambda_{n})_{n \in \mathbb{N}}\subset \mathbb{C}$ such that
\begin{align*}
	|\lambda_{n}| \geq 1, \ \ \textrm{for} \ \ n=1,...,m-1, \ \  
	\textrm{and} \ \ |\lambda_{n}|=1, \ \ \textrm{for} \ \ n \geq m.
\end{align*}
By \cite[Example 5.11]{HoMaOs}, $T_p$ is an $(m,\infty)$-isometric operator. Then, for instance, the tuple 
$(T_p, \frac{1}{2}T_p, \frac{1}{3}T_p,...,\frac{1}{d}T_p)$ is an $(m, \infty)$-isometric tuple on $\ell_p$.
\end{example}

Since the definition of $(m,\infty)$-isometric tuples differs from the definition of $(m,\infty)$-isometric operators
basically by the replacement of the exponent $k$ by a multi-index $\alpha$, it is not surprising that its basic theory 
can be developed analogously.

First of all, we have the following (compare \cite[page 399]{HoMaOs})).

\begin{proposition}\label{proposition equivalence (m,infty)}
A commuting operator tuple $T \in B(X)^d$ is an $(m,\infty)$-isometry if, and only if, there exists an $m \in \N$, 
$m \geq 1$, with
\begin{align}\label{equivalence (m,infty)}
	\max_{\substack{|\alpha|=\ell,..,\ell+m \\ |\alpha| \ \textrm{even}}} \norm{T^{\alpha}x}
	= \max_{\substack{|\alpha|=\ell,..,\ell+m \\ |\alpha| \ \textrm{odd}}} \norm{T^{\alpha}x}, 
	\ \ \forall \ell \in \N, \forall x \in X.
\end{align}
\end{proposition}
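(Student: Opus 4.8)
The plan is to imitate, in the $\max$-setting, the chain of arguments used earlier for $(m,p)$-isometric tuples, namely Lemma \ref{Q^n+1} $\to$ Corollary \ref{sum D^ell} $\to$ Proposition \ref{D^m=0}, with the weighted sums $Q^{n}(x)$ replaced by the quantities $M_n(x) := \max_{|\alpha|=n}\norm{T^\alpha x}$. The ``if'' direction is trivial, since specialising \eqref{equivalence (m,infty)} to $\ell = 0$ is exactly the definition of an $(m,\infty)$-isometry, so everything of substance takes place in the converse direction.

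First I would establish the $\max$-analogue of Lemma \ref{Q^n+1}: for all $x \in X$ and all $n \in \N$,
\[ M_{n+1}(x) = \max_{1 \le j \le d} M_n(T_j x). \]
This is elementary: any multi-index $\alpha$ with $|\alpha| = n+1$ has some entry $\alpha_j \geq 1$, and then $T^\alpha x = T^{\beta}(T_j x)$ where $\beta$ is $\alpha$ with its $j$-th entry lowered by one, so $|\beta| = n$; conversely, for any $j$ and any $\beta$ with $|\beta| = n$ we have $T^\beta(T_j x) = T^{\gamma}x$ with $\gamma$ the multi-index $\beta$ with $j$-th entry raised by one, so $|\gamma| = n+1$. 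Hence the finite sets $\{\norm{T^\alpha x} : |\alpha| = n+1\}$ and $\{\norm{T^\beta(T_j x)} : |\beta| = n,\ 1\le j \le d\}$ coincide, and taking maxima gives the claim.

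Next I would prove \eqref{equivalence (m,infty)} by induction on $\ell$, the base case $\ell = 0$ being the assumption that $T$ is an $(m,\infty)$-isometry (rewritten via $M_n$). For the inductive step, assuming the statement holds for a given $\ell$ and all $x \in X$, I would use the shift identity to replace each $M_n(x)$ with $n$ in the window $\{\ell+1,\dots,\ell+1+m\}$ by the maximum over $j$ of $M_{n-1}(T_j x)$; since $n \mapsto n-1$ is a parity-reversing bijection of $\{\ell+1,\dots,\ell+1+m\}$ onto $\{\ell,\dots,\ell+m\}$, the ``even'' maximum over the former window turns into the maximum over $j$ of the ``odd'' maximum over $\{\ell,\dots,\ell+m\}$ evaluated at $T_j x$. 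Applying the induction hypothesis to each vector $T_j x$ converts that ``odd'' maximum into the corresponding ``even'' one, and undoing the shift identifies the result with the ``odd'' maximum over $\{\ell+1,\dots,\ell+1+m\}$, which is precisely \eqref{equivalence (m,infty)} at level $\ell+1$. Finally I would translate back from $M_n$ to the $\norm{T^\alpha x}$ notation.

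I do not anticipate a real obstacle; the only care-points are bookkeeping ones, entirely analogous to the remarks following Corollary \ref{sum D^ell} in the $(m,p)$-case: one uses $\ell+1 \ge 1$ so that the shift identity applies to every index of the window, and $m \ge 1$ so that every block of $m+1$ consecutive integers contains indices of both parities and all the maxima in sight are over non-empty sets.
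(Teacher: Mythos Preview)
Your proof is correct. The shift identity $M_{n+1}(x)=\max_{j}M_n(T_jx)$ is exactly right, and the induction on $\ell$ goes through as you describe: shifting the window by one flips parities, the inductive hypothesis at each $T_jx$ swaps them back, and undoing the shift gives the claim at level $\ell+1$.

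The paper, however, does not induct. It proves \eqref{equivalence (m,infty)} for a given $\ell$ in a single step by decomposing every $\alpha$ with $\ell\le|\alpha|\le\ell+m$ as $\alpha=\beta+\gamma$ with $|\gamma|=\ell$ and $0\le|\beta|\le m$ (commutativity ensures $T^{\alpha}x=T^{\beta}(T^{\gamma}x)$ and that the sets match up), and then applies the defining $(m,\infty)$-isometry equality directly to each vector $T^{\gamma}x$. In effect the paper performs your shift by $\ell$ units at once rather than one unit at a time. Its argument is a little shorter and does not need an explicit induction or the auxiliary $M_n$-notation; your route has the virtue of mirroring the $(m,p)$ development (Lemma~\ref{Q^n+1} $\to$ Corollary~\ref{sum D^ell} $\to$ Proposition~\ref{D^m=0}) and making the structural parallel between the two settings transparent.
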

\begin{proof}
The sufficiency of \eqref{equivalence (m,infty)} is clear. So assume now that $T \in B(X)^d$ is an 
$(m,\infty)$-isometry and let $\ell \in \N$. We only prove the case where $\ell$ is even, since the case that $\ell$ is
odd is a direct analogue. We have
\begin{align*}
	&\max_{\substack{|\alpha|=\ell,..,\ell+m \\ |\alpha| \ \textrm{even}}} \norm{T^{\alpha}x}
	= \max_{\substack{|\beta|=0,..,m \\ |\gamma| =\ell \\ |\beta| + |\gamma| \ \textrm{even}}} 
	\norm{T^{\beta}T^{\gamma}x}
	= \max_{|\gamma| = \ell} \max_{\substack{|\beta|=0,..,m \\ |\beta| \ \textrm{even}}} 
	\norm{T^{\beta}T^{\gamma}x} \\
	= &\max_{|\gamma| = \ell} \max_{\substack{|\beta|=0,..,m \\ |\beta| \ \textrm{odd}}} 
	\norm{T^{\beta}T^{\gamma}x}
	= \max_{\substack{|\beta|=0,..,m \\ |\gamma| =\ell \\ |\beta| + |\gamma| \ \textrm{odd}}} 
	\norm{T^{\beta}T^{\gamma}x}
	= \max_{\substack{|\alpha|=\ell,..,\ell+m \\ |\alpha| \ \textrm{odd}}} \norm{T^{\alpha}x}.
\end{align*}
\end{proof}

Further, using the following lemma we obtain an equivalent description of $(m,\infty)$-isometric tuples 
(compare \cite[Lemma 5.3]{HoMaOs}).

\begin{lemma}
Let $\pi(n)=n \mathop{\mathrm{mod}} 2$ denote the parity of an $n\in\N$. For any family 
$a=(a_{\alpha})_{\alpha \in \N^d} \subset \R$ and $m \in \N$, $m \geq 1$, the following are equivalent.
\begin{itemize}
	\item[(i)]
		$a$ satisfies 
		$\max\limits_{\substack{|\alpha|=\ell,...,m+\ell \\ |\alpha| \ \textrm{even}}} a_{\alpha} 
		= \max\limits_{\substack{|\alpha|=\ell,...,m+\ell \\ |\alpha| \
        \textrm{odd}}} a_{\alpha}$, \ $\forall \ell \in \N$\\[1ex]
	\item[(ii)]
 		$a$ attains a maximum and \ 
		$\max\limits_{\alpha \in \N^d} a_{\alpha} 
		= \max\limits_{\substack{|\alpha|= \ell,...,m-1+\ell \\ \pi(|\alpha|)=\pi(m-1+\ell)}} a_{\alpha}$, 
		\ $\forall \ell\in \N$.
\end{itemize}
\end{lemma}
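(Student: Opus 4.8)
The plan is to first collapse the multi-index structure. For $k\in\N$ set $b_k:=\max_{|\alpha|=k}a_\alpha$; this maximum exists because only finitely many $\alpha\in\N^d$ satisfy $|\alpha|=k$. Since a supremum (or maximum) may be pulled through the outer maximum over levels, $a$ attains a maximum iff $(b_k)_{k\in\N}$ does, $\max_\alpha a_\alpha=\sup_k b_k$, and each maximum occurring in (i) and (ii) over a set of multi-indices equals the corresponding maximum of $(b_k)$ over the set of lengths. Thus, writing $W_\ell:=\{\ell,\ell+1,\ldots,\ell+m\}$, condition (i) becomes: for every $\ell$, $\max\{b_k:k\in W_\ell,\ k\text{ even}\}=\max\{b_k:k\in W_\ell,\ k\text{ odd}\}$; calling this common value $c_\ell$, we then also have $c_\ell=\max_{k\in W_\ell}b_k$. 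Condition (ii) becomes: $(b_k)$ has a maximum $M$ and $M=\max\{b_k:k\in\{\ell,\ldots,m-1+\ell\},\ \pi(k)=\pi(m-1+\ell)\}$ for all $\ell$. So it suffices to prove the equivalence at the level of the real sequence $(b_k)$ (alternatively one may invoke \cite[Lemma 5.3]{HoMaOs} after this reduction).

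\textbf{(ii)$\Rightarrow$(i).} This is short: given $\ell$, apply the identity in (ii) at $\ell$ and at $\ell+1$ to obtain indices $k_1\in\{\ell,\ldots,\ell+m-1\}$ and $k_2\in\{\ell+1,\ldots,\ell+m\}$, both lying in $W_\ell$, with $b_{k_1}=b_{k_2}=M$ and $\pi(k_1)=\pi(m-1+\ell)\neq\pi(m+\ell)=\pi(k_2)$. Since $M=\sup_k b_k$, the even- and odd-indexed maxima over $W_\ell$ are each equal to $M$, hence equal to each other.

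\textbf{(i)$\Rightarrow$(ii).} The crucial step is to show $c_{\ell+1}=c_\ell$ for all $\ell$, so that $c_\ell\equiv c_0$. For $c_{\ell+1}\le c_\ell$: the entries of $W_{\ell+1}$ other than $b_{\ell+m+1}$ lie in $W_\ell$, so it is enough to rule out $b_{\ell+m+1}>c_\ell$; but then $b_{\ell+m+1}$ strictly exceeds every other entry of $W_{\ell+1}$, so the maximum of $b$ over the parity class of $W_{\ell+1}$ opposite to $\pi(\ell+m+1)$ --- whose indices all lie in $\{\ell+1,\ldots,\ell+m\}\subseteq W_\ell$ --- is $\le c_\ell<b_{\ell+m+1}$, contradicting (i) for the window $W_{\ell+1}$. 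For $c_{\ell+1}\ge c_\ell$: under (i) the maximum of $W_\ell$ cannot be attained only at the endpoint $\ell$ (otherwise the parity class of $W_\ell$ not containing $\ell$ would have a strictly smaller maximum, against (i)), so it is attained at some index in $\{\ell+1,\ldots,\ell+m\}\subseteq W_{\ell+1}$. Hence $c_\ell\equiv c_0=:M$; in particular $\sup_k b_k=M<\infty$, and $M$ is attained since $c_0=\max\{b_0,\ldots,b_m\}$. Finally, fix $\ell$; by (i) the even- and odd-indexed maxima over $W_\ell$ both equal $M$, so there are indices $k_{\mathrm{ev}},k_{\mathrm{od}}\in W_\ell$ of the two parities with $b_{k_{\mathrm{ev}}}=b_{k_{\mathrm{od}}}=M$. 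The one of these, say $k_*$, with $\pi(k_*)=\pi(m-1+\ell)$ cannot equal $\ell+m$ (wrong parity), so $k_*\in\{\ell,\ldots,m-1+\ell\}$, which yields the identity in (ii). Translating back to $a$ via $b_k=\max_{|\alpha|=k}a_\alpha$ finishes the proof.

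\textbf{Main obstacle.} I expect the decisive point to be the ``no new record'' estimate $b_{\ell+m+1}\le c_\ell$ in the direction (i)$\Rightarrow$(ii): this one observation is what forces $(b_k)$ to be bounded, to attain its supremum, and to have all window maxima equal; once it is in place, the remaining arguments are routine bookkeeping with parities.
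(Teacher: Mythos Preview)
Your proof is correct. The reduction to the scalar sequence $b_k=\max_{|\alpha|=k}a_\alpha$ is a clean organizational move that the paper does not make; the paper carries the multi-index notation throughout, which obscures the fact that only the level-wise maxima matter. For the direction (i)$\Rightarrow$(ii), the paper argues by induction on $\ell$: it first extracts from (i) that for every $n\ge m$ the maximum over $|\alpha|\le n$ is already attained at some $|\alpha|<n$ (hence the global maximum exists and sits in $\{|\alpha|\le m-1\}$), establishes the $\ell=0$ case of (ii), and then pushes the window forward one step at a time using (i). You instead prove directly that the window maxima $c_\ell$ are constant via the two inequalities $c_{\ell+1}\le c_\ell$ and $c_{\ell+1}\ge c_\ell$, and only afterwards read off the parity statement. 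The content is the same --- in both cases the key is that (i) forbids a strict new record at the right endpoint of a window --- but your packaging isolates this ``no new record'' step more transparently and, combined with the reduction, makes the lemma an immediate corollary of the $d=1$ case (which is precisely \cite[Lemma~5.3]{HoMaOs}, as you note). The (ii)$\Rightarrow$(i) direction is essentially identical in both proofs.
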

\begin{proof}
(i)$\Rightarrow$(ii):\ We proceed by induction on $\ell$. Suppose $a=(a_{\alpha})_{\alpha \in \N^d} \subset \R$ satisfies (i) and
choose $n \in \N$ with $n \geq m$. By (i), $\max_{|\alpha|=n-m,...,n}a_{\alpha}$ is attained for at least two 
multi-indices, one of even and one of odd norm. Thus, there exists an $r < n$ and a $\beta \in \N^d$ with $|\beta|=r$, 
such that $a_{\beta} \geq a_{\alpha}$ for every $\alpha \in \N^d$ with $|\alpha|=n$. Since this holds for all 
$n \geq m$, we deduce that the family $(a_{\alpha})_{\alpha \in \N^d}$ indeed has a maximum, which is attained at an
$\alpha$ with $|\alpha| \leq m-1$. That is, $\max_{\alpha \in \N^d} a_{\alpha} = \max_{|\alpha|= 0,...,m-1} a_{\alpha}$. 
Since trivially
$\max_{|\alpha|= 0,...,m-1} a_{\alpha} \leq \max_{|\alpha|= 0,...,m} a_{\alpha}$, we actually have 
equality and by (i) can write
\begin{align*} 
\max\limits_{\alpha \in \N^d} a_{\alpha} 
= \max\limits_{\substack{|\alpha|= 0,...,m \\ \pi(|\alpha|)=\pi(m-1)}} a_{\alpha}
= \max\limits_{\substack{|\alpha|= 0,...,m-1\\ \pi(|\alpha|)=\pi(m-1)}} a_{\alpha}. 
\end{align*}
Hence, we have (ii) for $\ell_0 =0$.

Now assume that (ii) holds for some $\ell \in \N$. Then, in particular, 
$\max_{\alpha \in \N^d} a_{\alpha}= \max_{|\alpha|= \ell,...,m-1+\ell} a_{\alpha}$ and since 
$ \max_{|\alpha|= \ell,...,m-1+\ell} a_{\alpha} \leq \max_{|\alpha|= \ell,...,m+\ell} a_{\alpha}$, we again have
equality. By (i), we can omit the first $\ell$ on the right-hand side, obtaining
$\max_{\alpha \in \N^d} a_{\alpha}= \max_{|\alpha|= \ell+1,...,m+\ell} a_{\alpha}$. 
But this has to be equal to $\max_{|\alpha|= \ell+1,...,m+\ell+1} a_{\alpha}$ and again by (i), we can write
\begin{align*} 
\max\limits_{\alpha \in \N^d} a_{\alpha} 
= \max\limits_{\substack{|\alpha|= \ell+1,...,m+\ell+1 \\ \pi(|\alpha|)=\pi(m+\ell)}} a_{\alpha}
= \max\limits_{\substack{|\alpha|= \ell+1,...,m+\ell\\ \pi(|\alpha|)=\pi(m+\ell)}} a_{\alpha}. 
\end{align*}
This is (ii) for $\ell+1$.\\

(ii)$\Rightarrow$(i):\ If $a$ satisfies (ii), then, for all $\ell\in \N$,
\begin{align*}
\max_{|\alpha| \in \N^d} a_{\alpha} 
= \max_{\substack{|\alpha|= \ell,...,m-1+\ell \\ \pi(|\alpha|)=\pi(m-1+\ell)}} a_{\alpha} 
= \max_{\substack{|\alpha|= \ell,...,m+\ell \\ \pi(|\alpha|)=\pi(m-1+\ell)}} a_{\alpha}
\end{align*}
and also, by replacing $\ell$ with $\ell+1$,
\begin{align*}
\max_{|\alpha| \in \N^d} a_{\alpha} 
= \max_{\substack{|\alpha|= \ell+1,...,m+\ell \\ \pi(|\alpha|)=\pi(m+\ell)}} a_{\alpha}
\leq \max_{\substack{|\alpha|= \ell,...,m+\ell \\ \pi(|\alpha|)=\pi(m+\ell)}} a_{\alpha}
\leq \max_{\alpha \in \N^d} a_{\alpha}.
\end{align*}
This implies 
$\max\limits_{\substack{|\alpha|= \ell,...,m+\ell \\ \pi(|\alpha|)=\pi(m-1+\ell)}} a_{\alpha}
=\max\limits_{\substack{|\alpha|= \ell,...,m+\ell \\ \pi(|\alpha|)=\pi(m+\ell)}} a_{\alpha}$, 
which is (i).
\end{proof}

Combining the last two statements gives:

\begin{corollary}\label{max (m,inf)}
Let $m \in \N$ with $m \geq 1$.
A tuple of commuting operators $T=(T_1,...,T_d) \in B(X)^d$ is an $(m,\infty)$-isometry if, and only if,
for each $x \in X$ the family $(\norm{T^{\alpha}x})_{\alpha \in \N^d}$ attains a maximum and
\begin{align*}
	\max_{\alpha \in \N^d} \norm{T^{\alpha}x}
		= \max_{\substack{|\alpha|= \ell,...,m-1+\ell \\ \pi(|\alpha|)=\pi(m-1+\ell)}} \norm{T^{\alpha}x}, 
		\ \ \forall \ell\in \N.
\end{align*}
\end{corollary}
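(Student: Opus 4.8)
The plan is to combine Proposition~\ref{proposition equivalence (m,infty)} with the preceding Lemma, applied pointwise in $x$. Fix $x \in X$ and consider the family $a = (a_{\alpha})_{\alpha \in \N^d} \subset \R$ given by $a_{\alpha} := \norm{T^{\alpha}x}$. For this particular family, condition (i) of the Lemma is precisely the identity
\[
	\max_{\substack{|\alpha|=\ell,\ldots,m+\ell \\ |\alpha| \ \textrm{even}}} \norm{T^{\alpha}x}
	= \max_{\substack{|\alpha|=\ell,\ldots,m+\ell \\ |\alpha| \ \textrm{odd}}} \norm{T^{\alpha}x}, \quad \forall \ell \in \N,
\]
which is exactly condition~\eqref{equivalence (m,infty)} of Proposition~\ref{proposition equivalence (m,infty)} evaluated at $x$. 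Likewise, condition (ii) of the Lemma for this family reads: $(\norm{T^{\alpha}x})_{\alpha \in \N^d}$ attains a maximum and $\max_{\alpha \in \N^d} \norm{T^{\alpha}x} = \max_{|\alpha|= \ell,\ldots,m-1+\ell,\ \pi(|\alpha|)=\pi(m-1+\ell)} \norm{T^{\alpha}x}$ for all $\ell \in \N$, which is precisely the assertion of the Corollary at $x$.

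So the argument runs as follows. First I would invoke Proposition~\ref{proposition equivalence (m,infty)} to restate ``$T$ is an $(m,\infty)$-isometry'' as ``condition~\eqref{equivalence (m,infty)} holds for every $x \in X$''. Then, for each fixed $x$, apply the preceding Lemma to the real family $a_{\alpha} = \norm{T^{\alpha}x}$ to see that the validity of condition~\eqref{equivalence (m,infty)} at $x$ is equivalent to condition (ii) of that Lemma at $x$, i.e.\ to the displayed statement of the Corollary at $x$. Quantifying over all $x \in X$ then closes the equivalence in both directions.

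There is essentially no obstacle beyond bookkeeping. One only has to check that the ranges and parity constraints in Proposition~\ref{proposition equivalence (m,infty)} and in the Lemma line up -- they do: both range over $|\alpha| \in \{\ell,\ldots,m+\ell\}$, and the splitting into ``$|\alpha|$ even'' versus ``$|\alpha|$ odd'' partitions this index set by parity in exactly the same way as ``$\pi(|\alpha|) = \pi(m-1+\ell)$'' versus ``$\pi(|\alpha|) = \pi(m+\ell)$''. One also notes that the Lemma is formulated for an arbitrary real family $(a_{\alpha})_{\alpha \in \N^d}$, so it applies legitimately to $(\norm{T^{\alpha}x})_{\alpha \in \N^d}$ for each individual $x$; no completeness of $X$ is used.
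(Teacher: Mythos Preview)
Your proposal is correct and is exactly the approach the paper takes: the paper simply states ``Combining the last two statements gives:'' before the Corollary, and you have spelled out precisely how that combination works by applying the preceding Lemma pointwise to $a_\alpha = \norm{T^\alpha x}$ and invoking Proposition~\ref{proposition equivalence (m,infty)}.
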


We easily deduce the following.

\begin{corollary}\label{power bounded}
Let $T=(T_1,...,T_d) \in B(X)^d$ be an $(m,\infty)$-isometry. Then $(\norm{T^{\alpha}})_{\alpha \in \N^d}$ is bounded. 
In particular, $T$ is \emph{uniformly power bounded}, that is, there exists a common $C > 0$, such that 
$\norm{T_j^n} \leq C$, for all $n \in \N$, for all $j \in \{1,...,d\}$.
\end{corollary}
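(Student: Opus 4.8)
The plan is to read the statement off directly from Corollary \ref{max (m,inf)}. Fix $x \in X$. Applying Corollary \ref{max (m,inf)} with $\ell = 0$, the family $(\norm{T^{\alpha}x})_{\alpha \in \N^d}$ attains its maximum and
\begin{align*}
	\max_{\alpha \in \N^d} \norm{T^{\alpha}x}
	= \max_{\substack{|\alpha| = 0,\dots,m-1 \\ \pi(|\alpha|) = \pi(m-1)}} \norm{T^{\alpha}x}
	\leq \max_{|\alpha| \leq m-1} \norm{T^{\alpha}x},
\end{align*}
where in the last step I simply enlarge the parity-restricted index set to the full finite set $\{\alpha : |\alpha| \leq m-1\}$.

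Next I would bound the right-hand side uniformly in $x$. There are only finitely many multi-indices $\alpha$ with $|\alpha| \leq m-1$, and each $T^{\alpha} \in B(X)$, so the constant $C := \max_{|\alpha| \leq m-1} \norm{T^{\alpha}}$ is finite. For $\norm{x} \leq 1$ we have $\norm{T^{\alpha}x} \leq \norm{T^{\alpha}} \leq C$ for every such $\alpha$, hence $\max_{\alpha \in \N^d} \norm{T^{\alpha}x} \leq C$ for all $x$ in the unit ball; taking the supremum over $\norm{x} \leq 1$ gives $\norm{T^{\alpha}} \leq C$ for every $\alpha \in \N^d$, which is the first assertion. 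For the ``in particular'' part I would specialise to the multi-indices $\alpha = (0,\dots,0,n,0,\dots,0)$ with the entry $n$ in the $j$-th slot, for which $T^{\alpha} = T_j^{n}$; the bound just obtained yields $\norm{T_j^{n}} \leq C$ for all $n \in \N$ and all $j \in \{1,\dots,d\}$, with the same $C$, establishing uniform power boundedness.

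I do not expect a genuine obstacle here: the content is entirely carried by Corollary \ref{max (m,inf)} (in particular by the fact, proved via the preceding lemma, that the maximum of $(\norm{T^{\alpha}x})_{\alpha}$ is attained on the finite initial block $|\alpha| \leq m-1$). The only points needing a line of care are enlarging the parity-restricted maximum to the full block before estimating, and the harmless passage from the pointwise bound to the operator-norm bound via $\sup_{\norm{x}\leq 1}\max_{|\alpha|\leq m-1}\norm{T^{\alpha}x} \leq \max_{|\alpha|\leq m-1}\norm{T^{\alpha}}$.
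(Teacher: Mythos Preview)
Your proof is correct and is essentially the same as the paper's: the paper states the corollary as an immediate consequence of Corollary~\ref{max (m,inf)} and notes right afterwards that the constant is $C = \max_{|\alpha|=0,\dots,m-1}\norm{T^{\alpha}}$, with no appeal to the Uniform Boundedness Principle or completeness of $X$ --- precisely the argument you wrote out.
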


Note that $C$ is, of course, given by $C = \max_{|\alpha|=0,...,m-1} \norm{T^{\alpha}}$. So we do not have to make use of the Uniform Boundedness Principle here and are not even assuming that $X$ is complete.

By simply copying the proof in the single operator case (see \cite[Proposition 6.3]{HoMaOs}), we show the following:

\begin{proposition}
Let $T \in B(X)^d$ be an $(m,\infty)$-isometric tuple. Then $T$ is an $(m+1, \infty)$-isometric tuple.
\end{proposition}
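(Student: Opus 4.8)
The plan is to work entirely through the combinatorial characterisation in Corollary~\ref{max (m,inf)} and to exploit the freedom in the shift parameter $\ell$ appearing there. First I would record what the hypothesis gives us: since $T$ is an $(m,\infty)$-isometry, Corollary~\ref{max (m,inf)} says that for every $x\in X$ the family $(\norm{T^{\alpha}x})_{\alpha\in\N^d}$ attains a maximum, which I will call $M_x$, and that
\begin{align*}
	M_x = \max_{\substack{|\alpha|=\ell,\ldots,m-1+\ell \\ \pi(|\alpha|)=\pi(m-1+\ell)}} \norm{T^{\alpha}x}, \qquad \forall \ell\in\N.
\end{align*}
To prove that $T$ is an $(m+1,\infty)$-isometry I would invoke Corollary~\ref{max (m,inf)} a second time, now with $m$ replaced by $m+1$: it then suffices to show that $(\norm{T^{\alpha}x})_{\alpha\in\N^d}$ attains a maximum — already known — and that $M_x = \max_{\,|\alpha|=\ell,\ldots,m+\ell,\ \pi(|\alpha|)=\pi(m+\ell)}\norm{T^{\alpha}x}$ for all $\ell\in\N$.

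The key step is then a short inequality chain. Fixing $\ell\in\N$ and applying the displayed identity for $T$ with $\ell$ replaced by $\ell+1$ — and observing that $\pi(m-1+(\ell+1))=\pi(m+\ell)$ — I obtain
\begin{align*}
	M_x = \max_{\substack{|\alpha|=\ell+1,\ldots,m+\ell \\ \pi(|\alpha|)=\pi(m+\ell)}} \norm{T^{\alpha}x}.
\end{align*}
The index set on the right is contained in $\{\alpha\in\N^d : |\alpha|\in\{\ell,\ldots,m+\ell\},\ \pi(|\alpha|)=\pi(m+\ell)\}$, which is in turn contained in $\N^d$; hence
\begin{align*}
	M_x \leq \max_{\substack{|\alpha|=\ell,\ldots,m+\ell \\ \pi(|\alpha|)=\pi(m+\ell)}} \norm{T^{\alpha}x} \leq \max_{\alpha\in\N^d}\norm{T^{\alpha}x} = M_x,
\end{align*}
forcing equality throughout. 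That is exactly the condition characterising $(m+1,\infty)$-isometries, so the proof is complete.

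I do not expect a genuine obstacle here; this is essentially a transcription of the single–operator argument \cite[Proposition 6.3]{HoMaOs}. The only point needing care is the bookkeeping of parities: one must check that shifting $\ell\mapsto\ell+1$ converts the $(m,\infty)$-characterisation into precisely the ``top half'' of the $(m+1,\infty)$-characterisation, while the trivial inclusion of the index sets supplies the opposite inequality. (As an alternative one could argue directly from Proposition~\ref{proposition equivalence (m,infty)}, splitting a multi-index $\alpha$ with $|\alpha|\leq m+1$ according to whether $|\alpha|\leq m$, but routing through Corollary~\ref{max (m,inf)} is the cleaner path.)
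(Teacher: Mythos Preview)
Your proof is correct and essentially identical to the paper's: both invoke Corollary~\ref{max (m,inf)}, apply it with $\ell$ shifted to $\ell+1$ (using $\pi(m-1+(\ell+1))=\pi(m+\ell)$), sandwich the resulting maximum between the larger index set and the global maximum, and conclude by re-applying Corollary~\ref{max (m,inf)}. The paper's version is just a terser rendering of the same inequality chain.
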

\begin{proof}
By \ref{max (m,inf)}, $\max_{\alpha \in\N^d}\|T^{\alpha}x\|$ exists
and, for all $\ell\in\N$ and $x\in X$, we have
\begin{align*}
	\max_{\alpha\in\N^d}\|T^{\alpha}x\| 
	=\max_{\substack{|\alpha|= \ell+1,...,m+\ell \\ \pi(|\alpha|) = \pi(m+\ell)}} \|T^{\alpha}x\| 
	\leq\max_{\substack{|\alpha|= \ell,...,m+\ell \\ \pi(|\alpha|)=\pi(m+\ell)}}\|T^{\alpha}x\|
	\leq\max_{\alpha\in\N^d}\|T^{\alpha}x\|
\end{align*}	
and the ensuing equality gives the result by invoking \ref{max (m,inf)} again.
\end{proof}

The case $m=1$ deserves some special attention.

We call a commuting operator tuple $T=(T_1,...,T_d) \in B(X)^d$ an 
\emph{$\ell_{\infty}$-spherical isometry} if
\begin{align*}
	\max_{j=1,...,d}\norm{T_jx} = \norm{x}, \ \ \forall x \in X.
\end{align*}
Obviously $\ell_{\infty}$-spherical isometries are just $(1,\infty)$-isometric tuples.

\begin{proposition}\label{existence of j_x for (1,inf)}
Let $T \in B(X)^d$ be an $\ell_{\infty}$-spherical isometry (i.e., a $(1, \infty)$-isometric tuple). For each $x \in X$
there exists a $j_x \in \{1,...,d\}$ such that $\norm{T^n_{j_x}x}=\norm{x}$ for all $n \in \N$.\footnote{Note that we 
make use of the continuity of the operators in the proof.}
\end{proposition}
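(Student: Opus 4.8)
The plan is to lean on two observations: in a $(1,\infty)$-isometry each $T_j$ is a contraction, and along multi-indices of every length one can keep the norm of $x$ unchanged. So fix $x\in X$ (the case $x=0$ being trivial). Applying the defining identity $\max_{i}\norm{T_ix}=\norm{x}$ and the corresponding identity at the vector $T_ix$ shows that $\norm{T_iy}\le\norm{y}$ for \emph{every} $y\in X$ and every $i$, hence $\norm{T_i}\le 1$; this is the point where continuity of the operators enters. Consequently $\norm{T^{\beta}}\le 1$ for all $\beta\in\N^d$. (One may alternatively quote Corollary \ref{power bounded}.)

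Next I would manufacture, for each $n\in\N$, a multi-index $\alpha^{(n)}\in\N^d$ with $|\alpha^{(n)}|=n$ and $\norm{T^{\alpha^{(n)}}x}=\norm{x}$. Iterate the definition: put $x_0=x$ and, having $x_n$, choose $i_{n+1}\in\{1,\dots,d\}$ with $\norm{T_{i_{n+1}}x_n}=\norm{x_n}$ (such an index exists since the maximum in the definition is attained) and set $x_{n+1}=T_{i_{n+1}}x_n$. Then $\norm{x_n}=\norm{x}$ for all $n$, and, since the operators commute, $x_n=T^{\alpha^{(n)}}x$ where $\alpha^{(n)}_i$ records how often $i$ occurs among $i_1,\dots,i_n$, so $|\alpha^{(n)}|=n$. (Alternatively this follows from Corollary \ref{max (m,inf)} with $m=1$.)

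Now the combinatorial core. For each $n$, one of the $d$ entries of $\alpha^{(n)}$ is at least $n/d$; since there are only finitely many coordinates, some fixed $j\in\{1,\dots,d\}$ satisfies $\alpha^{(n)}_j\ge n/d$ for infinitely many $n$, and in particular $\sup_n\alpha^{(n)}_j=\infty$. I claim this $j$ is the desired $j_x$. Fix $k\in\N$ and pick $n$ from the infinite set with $\alpha^{(n)}_j\ge k$; write $\alpha^{(n)}=k e_j+\beta$ with $\beta:=\alpha^{(n)}-k e_j\in\N^d$. Using commutativity together with $\norm{T^{\beta}}\le 1$ and $\norm{T_j}\le 1$,
\[
	\norm{x}=\norm{T^{\alpha^{(n)}}x}=\norm{T^{\beta}\bigl(T_j^{k}x\bigr)}\le\norm{T_j^{k}x}\le\norm{T_j}^{k}\norm{x}\le\norm{x},
\]
so every inequality is an equality and $\norm{T_j^{k}x}=\norm{x}$. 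Since $k$ was arbitrary while $j$ does not depend on $k$, we conclude $\norm{T_{j}^{n}x}=\norm{x}$ for all $n\in\N$.

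The only genuine subtlety is the displayed chain of inequalities: the "extra'' operators bundled into $T^{\beta}$ are contractions, so once the norm along the full multi-index $\alpha^{(n)}$ has already reached its maximal achievable value $\norm{x}$, those operators cannot have shrunk anything — which is exactly what forces $\norm{T_j^{k}x}=\norm{x}$. I expect the main place a proof could go wrong is in choosing the recurring coordinate $j$ \emph{once and for all}, independently of $k$; this is precisely what the pigeonhole argument over the finite set $\{1,\dots,d\}$ guarantees.
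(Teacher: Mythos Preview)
Your proof is correct and follows essentially the same approach as the paper: both establish that each $T_j$ is a contraction, produce for every length $\ell$ a multi-index $\alpha$ with $|\alpha|=\ell$ and $\norm{T^{\alpha}x}=\norm{x}$, and then apply a pigeonhole argument over $\{1,\dots,d\}$ together with the contraction property to single out a fixed coordinate $j_x$. The paper packages the intermediate step as a separate claim and phrases the pigeonhole via the sets $A_j=\{n:\norm{T_j^kx}=\norm{x}\text{ for all }k\le n\}$, but your sandwich inequality $\norm{x}=\norm{T^{\beta}T_j^{k}x}\le\norm{T_j^{k}x}\le\norm{x}$ accomplishes the same thing just as cleanly.
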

\begin{proof}
We first show the following claim:

For each $n \in \N$ and each $x \in X$, there exists a  $j_{n, x} \in \{1,...,d\}$ with 
$\norm{T_{j_{n,x}}^kx} = \norm{x}$ for all $k \in \N$ with $k \leq n$.

Proof of the claim: 

Note first, since we have by definition $\max_{j=1,...,d}\norm{T_jx} = \norm{x}$, for all $x \in X$, that 
$\norm{T_j} \leq 1$ for all $j \in \{1,...,d\}$. Clearly then also $\norm{T^{\alpha}} \leq 1$ for all $\alpha \in \N^d$. Further, by Corollary \ref{max (m,inf)}, 
$\max_{\alpha \in \N^d}\norm{T^{\alpha}x} = \max_{|\alpha|=\ell}\norm{T^{\alpha}x} = \norm{x}$, for all $x \in X$ and 
all $\ell \in \N$ (since $m=1$). 

Therefore, for each $\ell \in \N$ and each $x \in X$, there exists an $\alpha(\ell, x)$ with $|\alpha(\ell,x)|=\ell$ and 
$\norm{T^{\alpha(\ell,x)}x}=\norm{x}$. Thus, $\norm{T^{\alpha(\ell,x)_j}x}=\norm{x}$ for all $j \in \{1,...,d\}$, as 
$\norm{T_j} \leq 1$ for all $j$ and $\norm{x}=\max_{\alpha \in \N^d}\norm{T^{\alpha}x}$. Moreover, setting 
$n = \ell d$, there exist an index $j_{n,x} \in \{1,...,d\}$ such that $\alpha_{j_{n,x}} \geq \frac{\ell}{d} = n$. \footnote{This certainly holds for any index $j_{\textrm{max}} \in \{1,...,d\}$ with
$\alpha(\ell,x)_{j_{\textrm{max}}}:=\max_{j=1,...,d}\alpha(\ell,x)_j$, which is, of course, not necessarily uniquely determined.} So $\norm{T^n_{j_{n,x}}x} = \norm{T^k_{j_{n,x}}x} = \norm{x}$ for all $k \leq n$, $k \in \N$, again as 
$\norm{T_{j_{n,x}}} \leq 1$. Thus, the claim is proved.

The rest of the proof is essentially the pigeon hole principle: For fixed $x \in X$, we
have infinitely many $n \in \N$, but only finitely many $j_{n,x} \in \{1,...,d\}$.

Fix $x \in X$ and define for each $j \in \{1,...,d\}$ the set 
\begin{align*}
A_j:=\{n \in \N \ | \ \norm{T_j^kx}=\norm{x},\ \textrm{for all} \ k \leq n,\ k \in \N\}. 
\end{align*}
By our claim, for each $n \in \N$ there is an index $j_{n,x}$, that is, every natural number resides in at least one of 
the $A_j$. Thus, since we have only finitely many sets $A_j$, at least one $A_{j_x}$ is infinite. 
$\norm{T_{j_x}} \leq 1$ then forces $A_{j_x}=\N$ as required.
\end{proof}

Therefore, we have the following remark. 

\begin{remark}\label{remark if T (m,infty)-tuple, X is union}
If $T$ is an $\ell_{\infty}$-spherical isometry, the space $X$ is the union of the closed subsets $X_{j}:=\{x \in X \ | \ \norm{x}=\norm{T_j^nx}, \ \forall n \in \N\}$. \footnote{Note that the $X_j$ are not disjoint (since $0 \in X_j$ for each $j$) and don't necessarily have trivial intersection.}
\end{remark}

In the uni-variate case, an $(m,\infty)$-isometry $T$ is an isometry under an equivalent norm, given by 
$\max_{k \in \N}\norm{T^kx}$ (\cite[Theorem 5.2]{HoMaOs}). Indeed, an analogous result holds in the tuple case.

\begin{theorem}\label{equivalent norm (m,inf)}
Let $T \in B(X)^d$ be an $(m,\infty)$-isometric tuple. Then there exists a norm $|.|_{\infty}$ on $X$ equivalent to
$\norm{.}$, under which $T$ is an $\ell_{\infty}$-spherical isometry. $|.|_{\infty}$ is given by 
$|x|_{\infty}= \max_{\alpha \in \N^d} \norm{T^{\alpha}x} = \max_{|\alpha|= 0,...,m-1} \norm{T^{\alpha}x}$, for all 
$x \in X$.
\end{theorem}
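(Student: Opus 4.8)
The plan is to verify, in order, that $|\cdot|_\infty$ is finite and given by the stated formula, that it is a norm equivalent to $\norm{\cdot}$, and finally that $T$ acts $\ell_\infty$-spherically with respect to it; the only ingredient is Corollary~\ref{max (m,inf)}. I would first apply that corollary with $\ell = 0$: for each $x \in X$ the family $(\norm{T^\alpha x})_{\alpha \in \N^d}$ attains a maximum and
\[
	\max_{\alpha \in \N^d}\norm{T^\alpha x}
	= \max_{\substack{|\alpha| = 0,\ldots,m-1 \\ \pi(|\alpha|) = \pi(m-1)}}\norm{T^\alpha x}
	\le \max_{|\alpha| = 0,\ldots,m-1}\norm{T^\alpha x}
	\le \max_{\alpha \in \N^d}\norm{T^\alpha x},
\]
so all three quantities coincide. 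Hence $|x|_\infty := \max_{\alpha \in \N^d}\norm{T^\alpha x}$ is a well-defined real number and equals $\max_{|\alpha| = 0,\ldots,m-1}\norm{T^\alpha x}$, as asserted.

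Next I would check the norm axioms. Absolute homogeneity is immediate from $\norm{T^\alpha(cx)} = |c|\,\norm{T^\alpha x}$, and the triangle inequality follows by taking $\max_\alpha$ in $\norm{T^\alpha(x+y)} \le \norm{T^\alpha x} + \norm{T^\alpha y} \le |x|_\infty + |y|_\infty$. For definiteness and, simultaneously, for the norm equivalence, the summand $\alpha = 0$ gives $\norm{x} \le |x|_\infty$, while the second (finite) description gives $|x|_\infty \le C\norm{x}$ with $C := \max_{|\alpha| = 0,\ldots,m-1}\norm{T^\alpha} \ge 1$; thus $\norm{x} \le |x|_\infty \le C\norm{x}$ for all $x \in X$. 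In particular each $T_j$ remains bounded with respect to $|\cdot|_\infty$.

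For the $\ell_\infty$-spherical isometry property, write $e_j \in \N^d$ for the multi-index with $j$-th entry $1$ and all other entries $0$. Since the $T_i$ commute, $\norm{T^\alpha T_j x} = \norm{T^{\alpha + e_j}x}$, so
\[
	|T_j x|_\infty = \max_{\alpha \in \N^d}\norm{T^{\alpha + e_j}x} = \max\{\norm{T^\beta x} : \beta \in \N^d,\ \beta_j \ge 1\},
\]
and therefore $\max_{j = 1,\ldots,d}|T_j x|_\infty = \max\{\norm{T^\beta x} : \beta \in \N^d,\ \beta \ne 0\} \le |x|_\infty$. For the reverse inequality I would invoke Corollary~\ref{max (m,inf)} once more, now with $\ell = 1$: it gives $|x|_\infty = \max\{\norm{T^\alpha x} : |\alpha| = 1,\ldots,m,\ \pi(|\alpha|) = \pi(m)\}$, and every multi-index occurring here is nonzero, whence $\max\{\norm{T^\beta x} : \beta \ne 0\} \ge |x|_\infty$. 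Combining the two bounds yields $\max_{j}|T_j x|_\infty = |x|_\infty$ for every $x \in X$, which is precisely the statement that $T$ is an $\ell_\infty$-spherical isometry for $|\cdot|_\infty$.

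The argument is essentially routine once Corollary~\ref{max (m,inf)} is available; the one point requiring care — and the step I expect to be the mild obstacle — is the reverse inequality in the last display, where one must rule out that the maximum defining $|x|_\infty$ is attained only at $\alpha = 0$. This is exactly what the $\ell = 1$ instance of Corollary~\ref{max (m,inf)} excludes.
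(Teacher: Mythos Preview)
Your proof is correct and follows essentially the same route as the paper's: both rely solely on Corollary~\ref{max (m,inf)}, first with $\ell=0$ to obtain the finite description of $|\cdot|_\infty$ and the norm equivalence, then with $\ell=1$ to verify the $\ell_\infty$-spherical isometry condition. The paper compresses the last step into the single chain $\max_j\max_{|\alpha|=0,\ldots,m-1}\norm{T^\alpha T_j x}=\max_{|\alpha|=1,\ldots,m}\norm{T^\alpha x}=\max_{\alpha\in\N^d}\norm{T^\alpha x}$, whereas you split it into two inequalities via the observation $\max_j|T_jx|_\infty=\max_{\beta\neq 0}\norm{T^\beta x}$, but the underlying argument is the same.
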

\begin{proof}
By Corollary \ref{max (m,inf)}, 
$\max_{\alpha \in \N^d} \norm{T^{\alpha}x} = \max_{|\alpha|= 0,...,m-1} \norm{T^{\alpha}x}$, for all
 
$x \in X$. Since $T_j$ is linear for each $j=1,...,d$ and the maximum preserves the triangle inequality, $|.|_{\infty}$
is indeed a norm on $X$. Further, by Corollary \ref{max (m,inf)}
\begin{align*}
\max_{j=1,...,d}\max_{\alpha \in \N^d} \norm{T^{\alpha}T_jx}
&=\max_{j=1,...,d}\max_{|\alpha|= 0,...,m-1} \norm{T^{\alpha}T_jx}\\
&=\max_{|\alpha|= 1,...,m} \norm{T^{\alpha}x}
= \max_{\alpha \in \N^d} \norm{T^{\alpha}x}, \ \ \forall x \in X, 
\end{align*}
so that $T$ is an $\ell_{\infty}$-spherical isometry with respect to $|.|_{\infty}$. Finally, we have
\begin{align*}
	\norm{x} \leq \max_{\alpha \in \N^d} \norm{T^{\alpha}x} = \max_{|\alpha|= 0,...,m-1} \norm{T^{\alpha}x}
	\leq \max_{|\alpha|= 0,...,m-1} \norm{T^{\alpha}} \cdot \norm{x}, \ \ \forall x \in X,
\end{align*}
and the two norms are equivalent.
\end{proof}

This, of course, implies immediately the next statement.

\begin{remark}
If $T$ is an $(m,\infty)$-isometric tuple, the space $X$ is the union of the closed 
subsets $X_{j,|.|}:=\{x \in X \ | \ |x|_{\infty}=|T_j^nx|_{\infty}, \ \forall n \in \N\}$.
\end{remark}

\section{Spectral Properties}\label{section spectral}

Let in this section $X$ be a complex Banach space and $m \geq 1$. As before, let $T=(T_1,...,T_d) \in B(X)^d$ 
be a tuple of commuting linear operators on $X$.\footnote{Most of the statements that we quote in this section are 
in general not true, if the operators $T_1,...,T_d$ do not commute.} A first definition of the \emph{joint spectral 
radius} of such a tuple $T$ was given by Rota and Strang in \cite{RoStra}:
\begin{align*}
	\hat{r}(T):=\lim_{k \rightarrow \infty} \max_{|\alpha|=k} \norm{T^{\alpha}}^{\frac{1}{k}}
\end{align*}
(Note that no definition of a joint spectrum is necessary for this expression to make sense.) In \cite{BeWa}, Berger and Wang give an alternative definition, which reads as follows.
\begin{align*}
	r_\ast(T):=\lim_{k \rightarrow \infty} \max_{|\alpha|=k} r(T^{\alpha})^{\frac{1}{k}},
\end{align*}
where $r(T^{\alpha}):=r(T_1^{\alpha_1}...T_d^{\alpha_d})$ is the usual spectral radius for operators.
However, in \cite[Lemma page 94]{Sol}, Soltysiak shows that we indeed have $\hat{r}(T)=r_\ast(T)$.

We further have the \emph{geometric joint spectral radius}, $r(T)$, defined as
\begin{align*}
	r(T):=\max\{\pnorm{\lambda}{2} \ | \ \lambda \in \sigma(T)\}. 
\end{align*}
Here, $\lambda=(\lambda_1,..., \lambda_d) \in \C^d$ and $\sigma(T)$ denotes the \emph{Taylor spectrum} (see \cite{Tay}).

Other kind of joint spectra include the \emph{Harte spectrum} $\sigma_H(T)$ (see \cite{Har}) and the \emph{joint (left) 
approximate point spectrum}\footnote{Harte refers to this set in \cite{Har} as \emph{left approximate point spectrum}.} 
\begin{align*}
\sigma_{\pi}(T):=\{(\lambda_1,..., \lambda_d) \in \C^d | \ \exists \ (x_k)_{k \in \N} 
\subset X \ \textrm{with} \ &\norm{x_k}=1,\ \textrm{s.th.} \\
&\lim_{k \rightarrow \infty} \sum_{j=1}^d \norm{(T_j-\lambda_j I) x_k}=0\}.
\end{align*}
All three spectra are non-void\footnote{Harte gives with \cite[Example 1.6]{Har} 
an example of a non-commuting operator pair with empty Harte spectrum.}. For the joint approximate point spectrum this 
has been shown in \cite[Theorem 1.11]{Slo-Zel}.\footnote{The definition of $\sigma_{\pi}(T)$ given in \cite{Slo-Zel} 
actually requires the existence of a net instead of a sequence, however, the proof uses a result given in \cite{Zel}, 
which is stated in terms of sequences.}

Further, it was shown in \cite{Cho-Zel} that the convex hulls of all the named spectra above coincide. Thus, the 
geometric joint spectral radius does not depend on the choice of the joint spectrum. That is, one then can replace in 
its definition the Taylor spectrum by the Harte spectrum or the joint approximate point spectrum.

Soltysiak generalizes the notion of the geometric joint spectral radius in \cite{Sol} in the following way: Define
for $p \in [1, \infty]$ the \emph{(geometric) joint $\ell_p$-spectral radius} $r_p(T)$ by
\begin{align*}
	r_p(T):=\max\{\pnorm{\lambda}{p} \ | \ \lambda \in \sigma_H(T)\}.
\end{align*}
Again, since we only consider commuting operator tuples, the $\ell_p$-spectral radius does not depend on the chosen 
spectrum.

Obviously, we have $r_2(T)=r(T)$. Further, Soltysiak shows in \cite[Theorem 2]{Sol} that 
$r_{\infty}(T)=\hat{r}(T)\ (=r_\ast(T))$. Thus, the 
$\ell_p$-spectral radii contain all variations of joint spectral radii named so far. Finally, Müller proves in 
\cite[Theorem 3]{Mue} the corresponding equalities for finite $p$:
\begin{align}\label{equation ell_p spectral radius for (m,p)-tuples p finite}
	r_p(T)= 
	\lim_{k \rightarrow \infty} \bigg( \sum_{|\alpha|=k} \frac{k!}{\alpha!}\norm{T^{\alpha}}^p \bigg)^{\frac{1}{pk}}
	=
	\lim_{k \rightarrow \infty} \bigg( \sum_{|\alpha|=k} \frac{k!}{\alpha!}r(T^{\alpha})^p \bigg)^{\frac{1}{pk}},
	\ \ p \in [1, \infty).
\end{align}
Now, Gleason and Richter prove in \cite[Proposition 3.1 and Lemma 3.2]{gleari} that the geometric spectral radius
$r(T)=r_2(T)$ of an $(m,2)$-isometric tuple on a complex Hilbert space is equal to $1$. They deliver two alternative
proofs for this, which can be easily modified to suit the case of $(m,p)$-isometric for $p\geq 1$ and $(m,\infty)$-isometric tuples on complex Banach spaces.

\begin{proposition}\label{spectral_radius}
Let $p \in [1, \infty]$ and $T \in B(X)^d$ be an $(m,p)$-isometry. Then the 
geometric joint $\ell_p$-spectral radius $r_p(T)$ of $T$ is $1$. That is,
\begin{itemize}
\item[(i)] If $p\in [1,\infty)$ and $T \in B(X)^d$ is an $(m,p)$-isometry, then 
\begin{align*}
	r_p(T)=\lim_{k \rightarrow \infty}\bigg( \sum_{|\alpha|=k} \frac{k!}{\alpha!}\norm{T^{\alpha}}^p \bigg)^{\frac{1}{pk}}=1.
\end{align*}
\item[(ii)] If $T \in B(X)^d$ is an $(m,\infty)$-isometry, then 
\begin{align*}
	\lim_{k \rightarrow \infty} \max_{|\alpha|=k}\norm{T^{\alpha}}^{\frac{1}{k}}=1.
\end{align*}
\end{itemize}
\end{proposition}
\begin{proof}
(i): By \eqref{equation ell_p spectral radius for (m,p)-tuples p finite}, 
$r_p(T)=\lim_{k \rightarrow \infty}\bigg( \sum_{|\alpha|=k} \frac{k!}{\alpha!}\norm{T^{\alpha}}^p \bigg)^{\frac{1}{pk}}$.

For all $k \in \N$, the number of summands in $\sum_{|\alpha|=k} \frac{k!}{\alpha!}\norm{T^{\alpha}}^p$ is certainly strictly less than $k^d$. Hence,
\begin{align*} 
&\lim_{k \rightarrow \infty}\bigg( \sum_{|\alpha|=k} \frac{k!}{\alpha!}\norm{T^{\alpha}}^p \bigg)^{\frac{1}{pk}}
= \lim_{k \rightarrow \infty}\max_{|\alpha|=k} \Big( \frac{k!}{\alpha!}\norm{T^{\alpha}}^p \Big)^{\frac{1}{pk}}\\ 
\textrm{and} \ \ &\lim_{k \rightarrow \infty}\sup_{\norm{x}=1}\bigg( \sum_{|\alpha|=k} \frac{k!}{\alpha!}\norm{T^{\alpha}x}^p \bigg)^{\frac{1}{pk}}
= \lim_{k \rightarrow \infty}\sup_{\norm{x}=1}\max_{|\alpha|=k} \Big( \frac{k!}{\alpha!}\norm{T^{\alpha}x}^p \Big)^{\frac{1}{pk}}.
\end{align*}
The following is taken from \cite[proof of Theorem 4]{Mue}).
\begin{align*}
	&r_p(T)= \lim_{k \rightarrow \infty}\bigg( \sum_{|\alpha|=k} \frac{k!}{\alpha!}\norm{T^{\alpha}}^p \bigg)^{\frac{1}{pk}}
	=
	\lim_{k \rightarrow \infty}\max_{|\alpha|=k} \Big( \frac{k!}{\alpha!}\norm{T^{\alpha}}^p \Big)^{\frac{1}{pk}} \\
	=
	&\lim_{k \rightarrow \infty}\max_{|\alpha|=k}\sup_{\norm{x}=1} 
	\Big( \frac{k!}{\alpha!}\norm{T^{\alpha}x}^p \Big)^{\frac{1}{pk}}
	= 
	\lim_{k \rightarrow \infty}\sup_{\norm{x}=1} \max_{|\alpha|=k} 
	\Big( \frac{k!}{\alpha!}\norm{T^{\alpha}x}^p \Big)^{\frac{1}{pk}} \\
	=
	&\lim_{k \rightarrow \infty}\sup_{\norm{x}=1}
	\bigg( \sum_{|\alpha|=k} \frac{k!}{\alpha!}\norm{T^{\alpha}x}^p \bigg)^{\frac{1}{pk}}
	= \lim_{k \rightarrow \infty} \sup_{\norm{x}=1}(Q^{k,p}(T, x))^{\frac{1}{pk}}.
\end{align*}
Since $\sigma_{H}(T) \neq \emptyset$ is compact, 
$r_p(T) < \infty$ exists finitely. Hence, the equation above shows that the function 
$\left(Q^{k,p}(T,\cdot)\right)^{\frac{1}{pk}}$, restricted to the closed unit ball $\overline{B(0;1)}$ of $X$, converge 
uniformly to $r_p(T)$. Thus, they converge point-wise.

The remaining parts of the proof are now almost identical to the proof of \cite[Proposition 3.1]{gleari}. By Proposition \ref{Newtonform}.(ii),
\begin{align*}
	\lim_{k \rightarrow \infty}\frac{Q^{k,p}(T,x)}{k^{m-1}}
	=\frac{1}{(m-1)!}P^{(p)}_{m-1}(T,x) \geq 0, \ \ \forall x \in X.
\end{align*}
Assuming that $m$ is the smallest natural number, for which $T$ is $(m,p)$-isometric, prompts that there exist vectors
$x \in \overline{B(0;1)}$ for which the inequality on the right is strict.
Hence, since $\lim_{k \rightarrow \infty} \big(k^{m-1}\big)^\frac{1}{pk}=1$, we have for all $x \in \overline{B(0;1)}$ 
with $P^{(p)}_{m-1}(T,x) \neq 0$,
\begin{align*}
	&r_p(T)
	=\lim_{k \rightarrow \infty} (Q^{k,p}(T,x))^{\frac{1}{pk}}
	= \lim_{k \rightarrow \infty} \bigg(\frac{(Q^{k,p}(T,x)}{k^{m-1}}\bigg)^{\frac{1}{pk}} \\
	= &\lim_{k \rightarrow \infty}\bigg(\frac{1}{(m-1)!}P^{(p)}_{m-1}(T,x)\bigg)^{\frac{1}{pk}}=1.
\end{align*}
(ii): By  Corollary \ref{power bounded} the sequence
$(\max_{|\alpha|=n}\norm{T^{\alpha}})_{\alpha \in \N^d}$ is bounded. The statement 
follows if we show that this sequence is also bounded below. 

By Proposition \ref{equivalent norm (m,inf)}, we have 
$C \cdot \norm{T^{\alpha}x} \geq |T^{\alpha}x|_{\infty}$ for all $x \in X$ and 
$\alpha \in \N^d$ (with
$C = \max_{|\alpha|= 0,...,m-1} \norm{T^{\alpha}}$). This implies
\begin{align*} 
C \cdot \max_{|\alpha|=n} \norm{T^{\alpha}x} \geq \max_{|\alpha|=n}|T^{\alpha}x|_{\infty} = |x|_{\infty} \geq \norm{x}, \ \ \forall x \in X, \ \forall n \in \N.
\end{align*}
Here, the equality is due to Corollary \ref{max (m,inf)}, since $T$ is an $\ell_{\infty}$-spherical isometry 
w.r.t. $|.|_{\infty}$.

In particular, we have $C \cdot \max_{|\alpha|=n} \norm{T^{\alpha}x} \geq \norm{x}$ for all $x \in X$, and then 
\begin{align*}
\sup_{\norm{x}=1}\left( C \cdot \max_{|\alpha|=n} \norm{T^{\alpha}x}\right) 
=C \cdot \max_{|\alpha|=n} \sup_{\norm{x}=1} \norm{T^{\alpha}x}
= C \cdot \max_{|\alpha|=n} \norm{T^{\alpha}} \geq 1, \ \ \forall n \in \N.
\end{align*}
\end{proof}

In \cite[Lemma 3.2]{gleari} it is shown that, if $T$ is an $(m,2)$-isometry on a complex Hilbert space, then
$\pnorm{\lambda}{2}=1$, for all $\lambda \in \sigma_{\pi}(T)$. That is, its joint
approximate point spectrum lies in the boundary of the $d$-dimensional unit sphere and, therefore, 
one gets again that the geometric joint spectral radius of an $(m,2)$-isometry is equal to $1$. 
To obtain this result, Gleason and Richter show (see \cite[page 187]{gleari}) that $\lambda \in \sigma_{\pi}(T)$ if, 
and only if, there exists a sequence $(x_n)_{n \in \N} \subset X$, with $\norm{x_n}=1$, for all $n \in \N$, such that 
$(T^{\alpha}-\lambda^{\alpha})x_n \rightarrow 0$ $(n \rightarrow \infty)$, for all $\alpha \in \N^d$. Using this 
fact, we obtain a generalisation of \cite[Lemma 3.2]{gleari}:

\begin{proposition}\label{approximate point spectrum}
Let $p \in [1, \infty]$. Then the joint approximate point spectrum $\sigma_{\pi}(T)$ of 
an $(m,p)$-isometric tuple $T \in B(X)^d$ is a subset of the $d$-dimensional complex unit sphere with respect to the 
$p$-norm.
\end{proposition}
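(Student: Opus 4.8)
The plan is to follow the strategy of Gleason and Richter for the Hilbert space case (\cite[Lemma 3.2]{gleari}), adapting it to the $(m,p)$-setting. First I would record the key fact quoted from \cite[page 187]{gleari}: a point $\lambda = (\lambda_1,\dots,\lambda_d) \in \C^d$ lies in $\sigma_{\pi}(T)$ if, and only if, there exists a sequence $(x_n)_{n \in \N} \subset X$ with $\norm{x_n}=1$ for all $n$ such that $(T^{\alpha}-\lambda^{\alpha})x_n \rightarrow 0$ as $n \to \infty$ for every $\alpha \in \N^d$. (Strictly speaking this equivalence should be checked to hold verbatim in the Banach space setting, but the argument is the same telescoping/binomial expansion used in \cite{gleari} and uses only that each $T_j$ is bounded.) So fix $\lambda \in \sigma_{\pi}(T)$ and such a sequence $(x_n)$; the goal is to show $\pnorm{\lambda}{p}=1$.

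For the case $p \in [1,\infty)$, I would apply Theorem \ref{existence of family of polynomials}: for each $n$ there is a polynomial $f_{x_n}$ of degree $\leq m-1$ with $f_{x_n}(k) = Q^{k,p}(T,x_n)$ for all $k \in \N$. Now I estimate $Q^{k,p}(T,x_n)$ for fixed $k$ as $n \to \infty$: since $\norm{T^{\alpha}x_n - \lambda^{\alpha}x_n} \to 0$ and $\norm{x_n}=1$, we get $\norm{T^{\alpha}x_n} \to |\lambda^{\alpha}|$, hence
\begin{align*}
	Q^{k,p}(T,x_n) = \sum_{|\alpha|=k}\frac{k!}{\alpha!}\norm{T^{\alpha}x_n}^p \longrightarrow \sum_{|\alpha|=k}\frac{k!}{\alpha!}|\lambda^{\alpha}|^p = \Big(\sum_{j=1}^d |\lambda_j|^p\Big)^k = \pnorm{\lambda}{p}^{pk}
\end{align*}
by the multinomial theorem. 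If $\pnorm{\lambda}{p} \neq 1$, the right-hand side $\pnorm{\lambda}{p}^{pk}$ grows (or decays) exponentially in $k$, which cannot be matched by a sequence of polynomials of bounded degree $m-1$ in $k$ — here one has to be a little careful, since the polynomial depends on $n$. The clean way around this: by Proposition \ref{spectral_radius}(i), $r_p(T) = \lim_{k\to\infty}\big(\sum_{|\alpha|=k}\frac{k!}{\alpha!}\norm{T^{\alpha}}^p\big)^{1/pk} = 1$, which gives $\pnorm{\lambda}{p} \le r_p(T) = 1$ immediately from $r_p(T) = \max\{\pnorm{\mu}{p} : \mu \in \sigma_H(T)\} \geq \sup\{\pnorm{\mu}{p}:\mu\in\sigma_\pi(T)\}$. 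For the reverse inequality $\pnorm{\lambda}{p} \geq 1$, I would use the limit computation above together with the lower bound on $Q^{k,p}$ coming from Proposition \ref{Newtonform}: for suitable $x$ with $P^{(p)}_{m-1}(T,x)\ne 0$ one has $Q^{k,p}(T,x) \sim \frac{1}{(m-1)!}P^{(p)}_{m-1}(T,x)\,k^{m-1}$, so $(Q^{k,p}(T,x))^{1/pk}\to 1$; combined with uniform convergence of $(Q^{k,p}(T,\cdot))^{1/pk}$ to $r_p(T)$ on the unit ball (established in the proof of Proposition \ref{spectral_radius}) and the fact that $(Q^{k,p}(T,x_n))^{1/pk}\to \pnorm{\lambda}{p}$ for each fixed... no — the cleanest route is: $\pnorm{\lambda}{p}^{pk} = \lim_n Q^{k,p}(T,x_n) \le \lim_n \sup_{\|x\|=1} Q^{k,p}(T,x) = \sup_{\|x\|=1}Q^{k,p}(T,x)$, and taking $k$-th roots and letting $k\to\infty$ gives $\pnorm{\lambda}{p} \le r_p(T) = 1$ again; for $\ge 1$, pass to a diagonal-type argument or observe that $\lambda\in\sigma_\pi(T)$ forces $\lambda\ne 0$ (else some $T_j$ would have $0$ in its approximate point spectrum contradicting uniform power boundedness being incompatible with... ) — I will instead derive $\ge 1$ directly: since $\norm{T^{\alpha}x_n}^p \le Q^{k,p}(T,x_n)/\binom{\text{mult}}{}$... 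The honest main obstacle is exactly this lower bound, and the way I would actually close it is to note that the polynomial $f_{x_n}$ has leading coefficient $\frac{1}{(m-1)!}P^{(p)}_{m-1}(T,x_n) \ge 0$, is nonnegative on $\N$, and that $f_{x_n}(k)\to \pnorm{\lambda}{p}^{pk}$ pointwise in $k$; a nonzero polynomial of degree $\le m-1$ cannot have its values on $\N$ converge to a geometric sequence with ratio $<1$ unless that ratio is $1$ — formalized by the fact that the $(m)$-th finite difference of the sequence $(\pnorm{\lambda}{p}^{pk})_k$ is $\pnorm{\lambda}{p}^{p\cdot k}(\pnorm{\lambda}{p}^p-1)^m$, which must vanish in the limit, forcing $\pnorm{\lambda}{p}=1$ (the case $\pnorm{\lambda}{p}>1$ being excluded by the spectral radius bound). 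I would present this finite-difference argument as the core.

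For the case $p = \infty$, the argument is parallel but uses Theorem \ref{equivalent norm (m,inf)} instead: under the equivalent norm $|x|_\infty = \max_{\alpha\in\N^d}\norm{T^{\alpha}x} = \max_{|\alpha|=0,\dots,m-1}\norm{T^{\alpha}x}$, the tuple $T$ is an $\ell_\infty$-spherical isometry, i.e.\ $\max_{j}|T_jx|_\infty = |x|_\infty$ for all $x$. Since $\sigma_\pi(T)$ is unchanged under passing to the equivalent norm (a $1$-dimensional observation: the defining condition $\sum_j\norm{(T_j-\lambda_j)x_k}\to 0$ with $\norm{x_k}=1$ is equivalent to the same with $|\cdot|_\infty$ and $|x_k|_\infty=1$, after normalizing, because the norms are equivalent), it suffices to prove the claim when $T$ is an $\ell_\infty$-spherical isometry. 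Then, with $|x_n|_\infty = 1$ and $(T^{\alpha}-\lambda^{\alpha})x_n\to 0$ in $|\cdot|_\infty$, we get $|\lambda^{\alpha}| = \lim_n |T^{\alpha}x_n|_\infty \le \lim_n |x_n|_\infty = 1$ for every $\alpha$ (using $|T^{\alpha}|_\infty\le 1$, which holds since each $T_j$ is a contraction for $|\cdot|_\infty$); in particular $|\lambda_j|\le 1$ for all $j$. Conversely, by Proposition \ref{existence of j_x for (1,inf)} (applied to $|\cdot|_\infty$), for each $n$ there is $j_n\in\{1,\dots,d\}$ with $|T_{j_n}^k x_n|_\infty = |x_n|_\infty = 1$ for all $k\in\N$; passing to a subsequence on which $j_n$ is constant, say $j_n\equiv j_0$, we get $|\lambda_{j_0}|^k = \lim_n |T_{j_0}^k x_n|_\infty = 1$, hence $|\lambda_{j_0}| = 1$. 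Therefore $\pnorm{\lambda}{\infty} = \max_j|\lambda_j| = 1$, which is the claim. The pigeonhole/subsequence step and the verification that $\sigma_\pi$ is norm-equivalence invariant are the only delicate points here, and both are routine. I expect the genuine main obstacle to be the finite-$p$ lower bound $\pnorm{\lambda}{p}\ge 1$; everything else follows the template in \cite{gleari} closely.
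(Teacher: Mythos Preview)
Your finite-$p$ argument is far more convoluted than it needs to be, and in fact the one clean idea you arrive at in the end --- the finite-difference observation --- \emph{is} the paper's entire proof for this case. Since $T$ is $(m,p)$-isometric, $P_m^{(p)}(T,x_n)=0$ for every $n$; this is a finite sum, so passing to the limit using $\norm{T^\alpha x_n}^p \to |\lambda^\alpha|^p$ and the multinomial theorem gives directly
\[
0 \;=\; \sum_{k=0}^m (-1)^{m-k}\binom{m}{k}\,\pnorm{\lambda}{p}^{pk} \;=\; \bigl(\pnorm{\lambda}{p}^p - 1\bigr)^m,
\]
whence $\pnorm{\lambda}{p}=1$. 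All of your detours through Proposition~\ref{spectral_radius}, Theorem~\ref{existence of family of polynomials}, and the separate exclusion of $\pnorm{\lambda}{p}>1$ via the spectral radius are superfluous: the single identity above handles both directions at once. Your exposition should simply begin with this computation.

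For $p=\infty$ your route is genuinely different from the paper's and also correct. The paper stays parallel to the finite-$p$ case: it takes the limit of the defining $(m,\infty)$-equation to obtain
\[
\max_{\substack{|\alpha|=0,\dots,m\\|\alpha|\ \text{even}}}|\lambda^\alpha| \;=\; \max_{\substack{|\alpha|=0,\dots,m\\|\alpha|\ \text{odd}}}|\lambda^\alpha|,
\]
observes $\max_{|\alpha|=k}|\lambda^\alpha|=\pnorm{\lambda}{\infty}^k$, and notes that equating an even power of $\pnorm{\lambda}{\infty}$ with an odd one forces $\pnorm{\lambda}{\infty}\in\{0,1\}$, with $0$ ruled out because the $|\alpha|=0$ term equals $1$. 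Your approach via Theorem~\ref{equivalent norm (m,inf)} and the pigeonhole step from Proposition~\ref{existence of j_x for (1,inf)} is more structural and buys some intuition about which coordinate realizes the maximum, but it uses heavier machinery; the paper's argument has the virtue of treating both cases $p<\infty$ and $p=\infty$ by the same template (pass the defining identity to the limit and solve the resulting scalar equation).
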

\begin{proof}
Let $\lambda \in \sigma_{\pi}(T)$. 

If $p \in [1, \infty)$ and $T \in B(X)^d$ is an $(m,p)$-isometric tuple, then 
there exists a sequence $(x_n)_{n \in \N} \subset X$, with $\norm{x_n}=1$, for
all $n \in \N$, such that
\begin{align*}
	0&=\lim_{n \rightarrow \infty }\sum_{k=0}^m (-1)^{m-k} \binom{m}{k} \sum_{|\alpha|=k} \frac{k!}{\alpha!}
	\norm{T^\alpha x_n}^p \\
	&= \sum_{k=0}^m (-1)^{m-k} \binom{m}{k} \sum_{|\alpha|=k} \frac{k!}{\alpha!}
	|\lambda^{\alpha}|^p = (1 - \pnorm{\lambda}{p}^p)^m.
\end{align*} 
$\pnorm{\lambda}{p}=1$, for all $\lambda \in \sigma_{\pi}(T)$, follows immediately.

If $T \in B(X)^d$ is an $(m,\infty)$-isometric tuple, there exists a sequence $(x_n)_{n \in \N} \subset X$, with $\norm{x_n}=1$, for all $n \in \N$, such that
\begin{align*}
		&\lim_{n \rightarrow \infty}
		\max_{\substack{|\alpha|=0,...,m \\ |\alpha| \ \textrm{even}}}\|T^{\alpha}x_{n}\| =	
		\max_{\substack{|\alpha|=0,...,m \\ |\alpha| \ \textrm{even}}}|\lambda^{\alpha}| \\ 
		\textrm{and} \ \ \  
		&\lim_{n \rightarrow \infty} 
		\max_{\substack{|\alpha|=0,...,m \\ |\alpha| \ \textrm{odd}}}\|T^{\alpha}x_{n}\| =	
		\max_{\substack{|\alpha|=0,...,m \\ |\alpha| \ \textrm{odd}}}|\lambda^{\alpha}|.
	\end{align*}
	Since $T$ is an $(m,\infty)$-isometry and by uniqueness of limits, we therefore have
	\begin{align*}
		\max_{\substack{|\alpha|=0,...,m \\ |\alpha| \ \textrm{even}}}|\lambda^{\alpha}|
		&= \max_{\substack{|\alpha|=0,...,m \\ |\alpha| \ \textrm{odd}}}|\lambda^{\alpha}| \\
		\Leftrightarrow \ \ \
		\max_{\substack{|\alpha|=0,...,m \\ |\alpha| \ \textrm{even}}}\pnorm{\lambda}{\infty}^{|\alpha|}
		&= \max_{\substack{|\alpha|=0,...,m \\ |\alpha| \ \textrm{odd}}}\pnorm{\lambda}{\infty}^{|\alpha|}.
\end{align*}
The fact that we are equating an even power of $\pnorm{\lambda}{\infty}$ with an odd power forces 
$\pnorm{\lambda}{\infty}$ to be $0$ or $1$. However, if $\lambda=0$, then the maximum on the left hand side is $1$, 
reached at $|\alpha|=0$, and the right hand side is $0$. Therefore, we must have $\pnorm{\lambda}{\infty}=1$. 
\footnote{Note also that, by the proof of Proposition \ref{spectral_radius}.(ii), we have $C \cdot \max_{|\alpha|=n} \norm{T^{\alpha}x} \geq \norm{x}$ for all $x \in X$ 
and all $n \in \N$, with $C > 0$. Thus, $\left(\max_{|\alpha|=0,...,m}\|T^{\alpha}x_{n}\|\right)_{n \in \N}$ is bounded below, which also shows that $\lambda = 0$ cannot occur.}
\end{proof}

\section{On the intersection class of $(m,p)$- and $(m,\infty)$-isometric tuples}

It is known (see \cite[Proposition 6.1]{HoMaOs}) and easy to see, that an $(m,p)$-isometric operator is simultaneously
an $(m,\infty)$-isometric operator if, and only if, it is an isometry. 

A natural analogue of this statement would appear to be ``an $(m,p)$-isometric tuple is simultaneously an 
$(m,\infty)$-isometric tuple if, and only if, it is an $\ell_p$-spherical isometry (or an $\ell_{\infty}$-spherical isometry)''. However, such a statement cannot be true. 

\begin{example}
Let $T_1 \in B(X)$ be an isometric operator and $T=(T_1,...,T_d) \in B(X)^d$ an $(m,p)$-isometry. By Proposition 
\ref{isometry with nilpotent}, $T$ is an $(m,p)$-isometry for every $p \in (0,\infty)$ and, hence, by definition an 
$(m,\infty)$-isometry. However, in general $T$ does not need to be an $\ell_{p}$-spherical or an 
$\ell_{\infty}$-spherical isometry, as Example \ref{example isometry and nilpotent} shows.
\end{example}

It is currently unknown what the intersection of the set of all $(m,p)$-isometric and all 
$(m,\infty)$-isometric tuples on a given normed space $X$ actually is. Looking at the joint approximate point 
spectrum (in the complex Banach space case if $p \geq 1$) gives some information.

\begin{remark}
Let $X$ be a complex Banach space and $p \in [1,\infty)$. Let further $T=(T_1,...,T_d) \in B(X)^d$ be an 
$(m,p)$-isometric and a $(\mu,\infty)$-isometric tuple. Then every 
$\lambda \in \sigma_{\textrm{ap}}(T)$ satisfies $\pnorm{\lambda}{p}=\pnorm{\lambda}{\infty} =1$ by Proposition 
\ref{approximate point spectrum}. Consequently, since 
$\sigma_{\textrm{ap}}(T) \subset \sigma_{\textrm{ap}}(T_1) \times \cdots \times \sigma_{\textrm{ap}}(T_d)$, one operator $T_{j_0}$ has spectral 
radius $r(T_{j_0}) \geq 1$ and the remaining operators $T_i$, $i \neq j_0$, are not bounded below and in particular not 
invertible.
\end{remark} 

More specific results can so far only be given in special cases.

The case where our tuple is constructed by using an $(m,\infty)$-isometric operator (i.e. by applying 
Proposition \ref{example made of (m,inf)-isometries}) is easy and we consider it first.

\begin{proposition}
Let $T=(T_1,...,T_d) \in B(X)^d$ be an $(\mu,\infty)$-isometric tuple of the form of 
Proposition \ref{example made of (m,inf)-isometries}. That is, $T_j=z_jS$, where $S \in B(X)$ is an 
$(\mu,\infty)$-isometric operator and $z:=(z_1,...,z_d) \in \K^d$ with $\norm{z}_{\infty}=1$. Assume further that 
$T$ is additionally an $(m,p)$-isometric tuple. Then the operator $S$ is an isometry, 
$T=(0,...,0,z_{j_0}S,0,...,0)$ with $|z_{j_0}|=1$ for some $j_0 \in \{1,...,d\}$ and $z_{j_0}S$ is (trivially) also an 
isometry. 
\end{proposition}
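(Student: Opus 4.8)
Assume $X\neq\{0\}$ (otherwise the statement is vacuous); then necessarily $m\geq 1$. The plan is to reduce the whole problem to a statement about the single operator $S$, exactly as in the proof of Proposition \ref{proposition example (m,p)-tuples made of (m,p)-operators} (but now \emph{without} assuming $\pnorm{z}{p}=1$), and then to play the polynomial growth forced by the $(m,p)$-condition off against the boundedness forced by the $(\mu,\infty)$-condition. First I would note $T^{\alpha}=z^{\alpha}S^{|\alpha|}$, so that the multinomial theorem gives
\begin{align*}
	Q^{k,p}(T,x)=\sum_{|\alpha|=k}\frac{k!}{\alpha!}|z^{\alpha}|^p\norm{S^kx}^p=c^k\norm{S^kx}^p,\qquad c:=\sum_{j=1}^d|z_j|^p=\pnorm{z}{p}^p,
\end{align*}
and $c\geq 1$ since $\pnorm{z}{\infty}=1$. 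Applying the defining $(m,p)$-identity of $T$ to the vectors $S^nx$ and multiplying by $c^n$ shows that the sequence $a=(c^n\norm{S^nx}^p)_{n\in\N}$ satisfies $D^ma=0$ (equivalently, invoke Proposition \ref{D^m=0}); hence by Proposition \ref{prop. HoMaOs} (cf.\ Theorem \ref{existence of family of polynomials}) there is a polynomial $g_x$ with $\deg g_x\leq m-1$ and $g_x(n)=c^n\norm{S^nx}^p$ for all $n\in\N$. On the other hand, $S$ being an $(\mu,\infty)$-isometric operator yields, by \cite[Theorem 5.2]{HoMaOs}, an equivalent norm $|x|_{\infty}=\max_{k\in\N}\norm{S^kx}=\max_{0\leq k\leq\mu-1}\norm{S^kx}$ under which $S$ is an isometry; in particular $\norm{S^nx}\leq|x|_{\infty}$ for all $n$.

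Next comes the dichotomy $c>1$ versus $c=1$. If $c>1$, then $\norm{S^nx}^p=g_x(n)/c^n\to 0$ as $n\to\infty$ (polynomial over exponential), so $\norm{S^nx}\to 0$ for every $x$. But then $|x|_{\infty}=|S^nx|_{\infty}=\max_{0\leq k\leq\mu-1}\norm{S^{n+k}x}\to 0$, which forces $\norm{x}\leq|x|_{\infty}=0$, i.e.\ $x=0$; as $x$ was arbitrary this contradicts $X\neq\{0\}$. Hence $c=1$. Combined with $\pnorm{z}{\infty}=1$, the identity $\sum_{j=1}^d|z_j|^p=1$ forces exactly one coordinate of $z$ to be nonzero, say $z_{j_0}$ with $|z_{j_0}|=1$, and all others to vanish; thus $T=(0,\dots,0,z_{j_0}S,0,\dots,0)$.

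It remains to see that $S$ is an isometry. With $c=1$ the polynomial $g_x(n)=\norm{S^nx}^p$ is bounded on $\N$ (by $|x|_{\infty}^p$), hence constant, so $\norm{S^nx}^p=g_x(0)=\norm{x}^p$ for all $n$; taking $n=1$ gives $\norm{Sx}=\norm{x}$ for all $x\in X$, i.e.\ $S$ is an isometry. Finally $\norm{z_{j_0}Sx}=|z_{j_0}|\,\norm{Sx}=\norm{x}$, so $z_{j_0}S$ is (trivially) an isometry as well.

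The only real obstacle is the case $c>1$: there both defining properties of $T$ must be used simultaneously, and the crux is that the $(m,p)$-condition pins $c^n\norm{S^nx}^p$ to a polynomial in $n$ while the $(\mu,\infty)$-condition keeps $\norm{S^nx}$ bounded, which is incompatible for $c>1$ unless $X$ is trivial. Everything else is the bookkeeping already done in Proposition \ref{proposition example (m,p)-tuples made of (m,p)-operators} together with a ``bounded polynomial is constant'' argument.
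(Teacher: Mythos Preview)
Your proof is correct and follows essentially the same route as the paper: compute $Q^{n,p}(T,x)=c^{n}\norm{S^nx}^p$ via the multinomial theorem, use the $(m,p)$-condition to force this sequence to be a polynomial in $n$, and then use the $(\mu,\infty)$-property of $S$ (boundedness of $\norm{S^nx}$) to rule out $c>1$ and conclude. The only cosmetic difference is that for the final step the paper invokes \cite[Proposition 6.1]{HoMaOs} to deduce that $S$ is an isometry, whereas you give the underlying ``bounded polynomial is constant'' argument directly.
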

\begin{proof}
Since $T$ is an $(m,p)$-isometry, we have for all $x \in X$,
\begin{align*}
	Q^n(x)= \sum_{|\alpha|=n} \frac{n!}{\alpha!}\| T^\alpha x\|^p 
	= \sum_{|\alpha|=n} \frac{n!}{\alpha!}\| z^\alpha S^{|\alpha|} x\|^p
	= \pnorm{z}{p}^{np} \| S^n x\|^p,
\end{align*}
by the multinomial theorem. Then $D^m\left(Q^n(x)\right)_{n \in \N}=0$ implies that $\pnorm{z}{p}S$ is an 
$(m,p)$-isometric operator and, consequently, the sequence $\left(\pnorm{z}{p}^{np}\norm{S^nx}^p\right)_{n \in \N}$ is 
a polynomial of degree $\leq m-1$ for all $x \in X$.

Since $\pnorm{z}{\infty}=1$, we have $\pnorm{z}{p} \geq 1$. If this inequality was strict, 
the sequence $\left(\pnorm{z}{p}^{np}\right)_{n \in \N}$ would 
grow exponentially. But since $S$ is a 
$(\mu,\infty)$-isometric operator, we have for all $\ell \in \N$, for all $x \in X$,
$\max_{n \in \N}\norm{S^nx}=\max_{\ell,...,\ell+\mu-1}\norm{S^nx}$. This would contradict the polynomial growth of 
$\left(\pnorm{z}{p}^{np}\norm{Sx}^p\right)_{n \in \N}$.

Therefore, we have $\pnorm{z}{\infty}=\pnorm{z}{p}=1$, which gives $|z_{j_0}|=1$ for some 
$j_0 \in \{1,...,d\}$ and $z_i=0$ for all $i \neq j_0$. In particular, 
$S$ has to be an $(m,p)$-isometric operator, which forces
$S$ by \cite[Proposition 6.1]{HoMaOs}, and therefore $z_{j_0}S$, to 
be an isometry.
\end{proof}

To prove further results, we first state a series of lemmata.

\begin{lemma}\label{when (1,inf) implies isometry}
Let $T=(T_1,...,T_d) \in B(X)^d$ be an $\ell_\infty$-spherical isometry (i.e., a $(1,\infty)$-isometric tuple). 
For $j \in \{1,...,d\}$ let, as in Remark \ref{remark if T (m,infty)-tuple, X is union},\\ 
$X_j=\{x \in X \ | \ \norm{x}=\norm{T_j^nx}, \ \forall n \in \N\}$. 

Then one operator $T_{j_0}$ is an isometry and all 
other operators $T_i$ with $i \neq j_0$ are nilpotent if, and only if,
there exists $\nu \in \N$ such that, $X_j \subset \ker T^{\nu}_i$, 
for all $i \neq j$, $i, j \in \{1,...,d\}$.
\end{lemma}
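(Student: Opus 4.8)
The plan is to prove the two implications separately, with the forward direction being essentially immediate and the reverse direction carrying the real content.

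For the forward implication, suppose $T_{j_0}$ is an isometry and $T_i$ is nilpotent for all $i \neq j_0$, say $T_i^{\nu_i}=0$. Set $\nu := \max_{i \neq j_0}\nu_i$; then $T_i^{\nu}=0$ for all $i \neq j_0$, so trivially $X_j \subset \ker T_i^{\nu}$ whenever $i \neq j_0$. The only case requiring a word is $i \neq j$ with $j = j_0$, which is covered, and $i \neq j$ with $i = j_0$: here we must show $X_j \subset \ker T_{j_0}^{\nu}$ for $j \neq j_0$. But if $x \in X_j$ with $j \neq j_0$, then $j \in \{1,\dots,d\}\setminus\{j_0\}$ so $T_j$ is nilpotent, and $x \in X_j$ means $\norm{T_j^n x} = \norm{x}$ for all $n$; taking $n = \nu_j$ forces $\norm{x}=0$, i.e. $x = 0$, hence certainly $x \in \ker T_{j_0}^{\nu}$. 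So $X_j \subset \ker T_i^{\nu}$ for all $i \neq j$.

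For the reverse implication, assume there is a $\nu \in \N$ with $X_j \subset \ker T_i^{\nu}$ for all $i \neq j$. First recall from Remark \ref{remark if T (m,infty)-tuple, X is union} that $X = \bigcup_{j=1}^d X_j$ and each $X_j$ is a closed subset; moreover $\norm{T_j}\le 1$ for all $j$ (since $\max_j\norm{T_j x}=\norm{x}$). The strategy is: (1) show that for each fixed $x\in X$ there is a single index $j$ with $x\in X_j$ AND $T_i^\nu x = 0$ for \emph{all} $i\ne j$ simultaneously — this is almost the hypothesis but one must be careful that the $j$ witnessing $x\in X_j$ is the same across all $i$; since the hypothesis reads ``$X_j\subset\ker T_i^\nu$ for all $i\ne j$'', picking any $j$ with $x\in X_j$ already gives $T_i^\nu x=0$ for every $i\ne j$, so this is immediate. (2) Deduce that each operator $T_i$ is nilpotent on each $X_j$ with $j\ne i$: indeed $T_i^\nu(X_j)=\{0\}$. (3) The subtle point: conclude that exactly one index, say $j_0$, can have $X_{j_0}$ not contained in $\bigcap_{i\ne j_0}\ker T_i^\nu$ — but by hypothesis every $X_j$ \emph{is} so contained, so we must instead argue that $X_{j_0}$ is large. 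Use $X = \bigcup_j X_j$ together with a Baire-type or directness argument: since $X$ is a vector space written as a finite union of closed subsets $X_j$, if only one of them, $X_{j_0}$, is ``full'' and the rest are contained in proper closed subspaces $\bigcap_{i\ne j}\ker T_i^\nu \supsetneq$? — here we need that $X_j$ for $j\ne j_0$ actually lies in a single $\ker T_{j_0}^\nu$. (4) Finally, on $X_{j_0}$ we have $\norm{T_{j_0}^n x}=\norm{x}$, and we must boost this to an honest isometry: show $X_{j_0}=X$, whence $T_{j_0}$ is an isometry and $T_i^\nu=0$ on all of $X$ for $i\ne j_0$, i.e. $T_i$ is nilpotent.

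The main obstacle is step (3)–(4): promoting the pointwise/local statement ``$X = \bigcup X_j$ and $T_i$ is nilpotent on $X_j$ for $j\ne i$'' to the global dichotomy ``one $T_{j_0}$ is an isometry everywhere, the others vanish in power $\nu$ everywhere''. The natural tool is that a vector space (or Banach space, via Baire category) is not the union of finitely many proper closed subspaces; one argues that $\bigcap_{i\ne j}\ker T_i^\nu$ for the ``wrong'' $j$ must be proper, forcing all but one of the $X_j$ to be proper closed subsets whose union still misses a residual set — but $X = \bigcup_j X_j$ then pins everything onto $X_{j_0}$, and combined with continuity of the $T_i$ and the closedness of $X_{j_0}$ one gets $X_{j_0}=X$. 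Care is needed because the $X_j$ are not subspaces, so the union-of-subspaces argument must be applied to the subspaces $\bigcap_{i\ne j}\ker T_i^\nu$ (which \emph{contain} $X_j$ by hypothesis) rather than to the $X_j$ themselves; the hypothesis $X_j\subset\ker T_i^\nu$ is precisely what makes this substitution legitimate.
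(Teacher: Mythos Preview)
Your approach is essentially the paper's: replace the sets $X_j$ by the subspaces $Y_j := \bigcap_{i\ne j}\ker T_i^\nu \supset X_j$, observe $X=\bigcup_j X_j \subset \bigcup_j Y_j$, and use that a vector space over $\K\in\{\R,\C\}$ is not a finite union of proper subspaces to force $Y_{j_0}=X$ for some $j_0$, i.e.\ $T_i^\nu=0$ for all $i\ne j_0$. That is exactly the paper's argument for the reverse implication, so your final paragraph lands on the right idea.

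Two places where your plan detours unnecessarily. First, Baire category is neither needed nor available: $X$ is only assumed to be a normed space, not complete, and the purely algebraic union-of-subspaces fact suffices. Second, your last step --- boosting $X_{j_0}$ to all of $X$ --- should not be attributed to ``continuity of the $T_i$ and closedness of $X_{j_0}$''; those do not do the job. The paper's (and the correct) argument is the one you already used in the forward direction: once $T_i^\nu=0$ for $i\ne j_0$, any $x\in X_i$ with $i\ne j_0$ satisfies $\|x\|=\|T_i^\nu x\|=0$, so $X_i=\{0\}$; hence $X=\bigcup_j X_j = X_{j_0}$ and $T_{j_0}$ is an isometry.
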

\begin{proof}
``$\Rightarrow$'': If one operator $T_{j_0}$ is an isometry and all 
other operators $T_i$ with $i \neq j_0$ are nilpotent, we have $X_{j_0}=X$ and $X_i = \{0\}$ for all $i \neq j_0$. Setting $\nu:=\max\{n \in \N \ | \ T_i^n=0 , \ i = 1,..., d, i \neq j_0 \}$ gives
$\ker T^{\nu}_i = X$ for all $i \neq j_0$. Since $\ker T^{\nu}_{j_0} = \{0\}$, the statement follows.

``$\Leftarrow$'': By Remark \ref{remark if T (m,infty)-tuple, X is union}, $X=\bigcup_{j=1,...,d} X_j$. 
Since by assumption each $X_j \subset \ker T_i^{\nu}$ 
for all $i \in \{1,...,d\}$ with $i \neq j$, we have 
\begin{align*}
	X = \bigcup_{j=1,...,d} X_j \subset \bigcup_{j=1,...,d} \bigcap_{\substack{i=1,...,d \\ i \neq j}}\ker T^{\nu}_i.
\end{align*}
This forces $\bigcap_{\substack{i=1,...,d \\ i \neq j_0}}\ker T^{\nu}_i = X$ for some $j_0 \in \{1,...,d\}$ (since each intersection is a linear space). 
Hence, $\ker T^{\nu}_i = X$ for all $i \neq j_0$, which means $T^{\nu}_i = 0$ and, thus, 
$X_i = \{0\}$ for all $i \neq j_0$. Then we must have $X_{j_0} = X$ and $T_{j_0}$ is an isometry.
\end{proof}

\begin{lemma}\label{T_i^mT_j^m=0}
Let $T =(T_1,...,T_d) \in B(X)^d$ be an $(m,p)$-isometric tuple and also a $(\mu, \infty)$-isometric tuple. Then
for all $\gamma=(\gamma_1,...,\gamma_d) \in \N^d$ with the property 
$|\gamma'_j| \geq m$ for every
$j \in \{1,...,d\}$, we have $T^{\gamma}=0$. In particular, $T_i^mT_j^m=0$ for every $i \neq j$, $i,j \in \{1,...,d\}$.
\end{lemma}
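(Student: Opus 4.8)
The plan is to fix an arbitrary $z \in X$ and a multi-index $\gamma$ with $|\gamma'_j| \geq m$ for every $j$ (we assume $m \geq 1$, the case $m=0$ being degenerate), put $u := T^{\gamma}z$, and show $u = 0$; since $z$ is arbitrary this gives $T^{\gamma} = 0$. The proof pits a decay property of the $(m,p)$-isometry against a rigidity property of the $(\mu,\infty)$-isometry. The decay is Corollary \ref{asymptotic}.(ii): for every $j$, every $\beta \in \N^{d-1}$ with $|\beta| \geq m$ and every $x \in X$ we have $T_j^n(T'_j)^{\beta}x \to 0$ as $n \to \infty$. The rigidity comes via Theorem \ref{equivalent norm (m,inf)}, which lets us replace $\norm{.}$ by an equivalent norm $|.|_{\infty}$ under which $T$ is an $\ell_{\infty}$-spherical isometry; since the norms are equivalent, the convergence in Corollary \ref{asymptotic}.(ii) is just as valid for $|.|_{\infty}$.

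Suppose, for contradiction, that $u \neq 0$. Proposition \ref{existence of j_x for (1,inf)}, applied to the $\ell_{\infty}$-spherical isometry $T$ on $(X, |.|_{\infty})$ and to the vector $u$, furnishes an index $j \in \{1,\dots,d\}$ with $|T_j^n u|_{\infty} = |u|_{\infty}$ for all $n \in \N$, a value which is strictly positive because $u \neq 0$. Using commutativity, write $u = T^{\gamma}z = T_j^{\gamma_j}(T'_j)^{\gamma'_j}z = T_j^{\gamma_j}w$ with $w := (T'_j)^{\gamma'_j}z$, so that $T_j^n u = T_j^{n+\gamma_j}w$ for all $n$. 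Since the hypothesis applies to this very index $j$, we have $|\gamma'_j| \geq m$, and Corollary \ref{asymptotic}.(ii) (with $\beta = \gamma'_j$, $x = z$) gives $T_j^k w \to 0$, hence $T_j^n u = T_j^{n+\gamma_j}w \to 0$. This contradicts $|T_j^n u|_{\infty} = |u|_{\infty} > 0$, so in fact $u = 0$, i.e.\ $T^{\gamma} = 0$. The ``in particular'' assertion is the special case $\gamma = m e_i + m e_j$ with $i \neq j$: then $|\gamma'_k| = 2m$ for $k \notin \{i,j\}$ and $|\gamma'_i| = |\gamma'_j| = m$, so the hypothesis holds and $T^{\gamma} = T_i^m T_j^m = 0$.

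The delicate point — and the reason the full $(\mu,\infty)$-hypothesis is needed rather than mere power boundedness — is the step that produces the index $j$. Power boundedness together with the $(m,p)$-property alone does not force $T^{\gamma} = 0$: the pair $(\tfrac12 I, \tfrac12 I)$ is a power-bounded $(1,1)$-isometric tuple, satisfies $|\gamma'_1| = |\gamma'_2| = 1$ for $\gamma = (1,1)$, yet $T_1 T_2 = \tfrac14 I \neq 0$ (it is, of course, not a $(\mu,\infty)$-isometry). So one really must use that a $(\mu,\infty)$-isometry fixes, for each vector, the $|.|_\infty$-norm of all iterates of at least one of the operators $T_j$. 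That the index $j$ so obtained is automatically one to which $|\gamma'_j| \geq m$ applies is immediate, since that hypothesis is imposed for \emph{every} $j$; and when $m \geq 1$ this hypothesis already forces $\gamma$ to have at least two nonzero entries, so $w = (T'_j)^{\gamma'_j}z$ is a genuine nontrivial image. No completeness of $X$ is used, as all cited results hold in their pointwise form on general normed spaces.
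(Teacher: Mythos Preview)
Your proof is correct. Both your argument and the paper's rest on the same two ingredients---the decay from Corollary \ref{asymptotic}.(ii) and the reduction to the $\ell_\infty$-spherical case via Theorem \ref{equivalent norm (m,inf)}---but they extract the contradiction differently. The paper works at the level of the full shells $\{T^\alpha : |\alpha|=\ell\}$: it uses that $\max_{|\alpha|=\ell}\|T^\alpha T^\gamma x\|$ is constant in $\ell$ (by Corollary \ref{max (m,inf)}), and then runs an explicit $\varepsilon$-argument, via the pigeonhole observation that some coordinate $\alpha_{j_{\max}}\geq \ell/d$ together with $\|T_j\|\leq 1$, to drive this maximum to $0$. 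You instead invoke Proposition \ref{existence of j_x for (1,inf)} on $(X,|.|_\infty)$ to single out one index $j$ along which $|T_j^n u|_\infty$ is constant, and obtain an immediate contradiction from $T_j^{n+\gamma_j}(T'_j)^{\gamma'_j}z \to 0$. Your route is shorter and avoids the shell-by-shell estimate; the paper's route is more self-contained in that it does not appeal to Proposition \ref{existence of j_x for (1,inf)} (whose proof is itself a pigeonhole argument of a similar flavour). The concluding discussion and the $(\tfrac12 I,\tfrac12 I)$ counterexample are apt and correctly illustrate why mere power boundedness is insufficient.
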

\begin{proof}
Lets first consider the case $\mu=1$. If $T$ is a $(1,\infty)$-isometric
tuple, by Corollary \ref{max (m,inf)},
\begin{align*}
	\max_{\alpha \in \N^d} \norm{T^{\alpha}x} = \norm{x} = \max_{|\alpha|=\ell} \norm{T^{\alpha}x}, 
	\ \forall \ell \in \N, \ \forall x \in X.
\end{align*}
So $\left(\max_{|\alpha|=\ell} \norm{T^{\alpha}x}\right)_{\ell \in \N}$ is a constant sequence for all $x \in X$. In 
particular, $\left(\max_{|\alpha|=\ell} \norm{T^{\alpha}T^{\gamma}x}\right)_{\ell \in \N}$ is constant for any 
multi-index $\gamma \in \N^d$, for all $x \in X$.

Since $T$ is an $(m,p)$-isometric tuple, for any $x \in X$, any $\beta \in \N^{d-1}$ with $|\beta| \geq m$ and any 
$j \in \{1,...,d\}$, by Corollary \ref{asymptotic}.(ii), $T_j^n(T'_j)^{\beta}x \rightarrow 0$ 
for $n \rightarrow \infty$. We will show that this implies, given a $\gamma \in \N^{d}$ with the property 
$|\gamma'_j| \geq m$ for every $j \in \{1,...,d\}$, that $\norm{T^{\alpha}T^{\gamma}x} \rightarrow 0$ as 
$|\alpha| \rightarrow \infty$, for all $x \in X$.

So take a $\gamma \in \N^{d}$ with the property $|\gamma'_j| \geq m$ for
every $j \in \{1,...,d\}$. Then for any $x \in X$, any $j \in \{1,...,d\}$ and for all $\varepsilon > 0$, there exists 
an $N_{\varepsilon}(x,j) \in \N$ such that $\norm{T_j^n(T'_j)^{\gamma'_j}x} \leq \varepsilon$, 
for all $n \geq N_{\varepsilon}(x,j)$, by Corollary \ref{asymptotic}.(ii).

But since we have only finitely many $j$, by simply taking the maximum $N_{\varepsilon}(x)$ of all 
$N_{\varepsilon}(x,j)$, we get that for any $x \in X$, for all $\varepsilon > 0$, 
$\norm{T_j^n(T'_j)^{\gamma'_j}x} \leq \varepsilon$, for all $j \in \{1,...,d\}$, for all $n \geq N_{\varepsilon}(x)$.

For all $\alpha \in \N^d$ with $|\alpha|=\ell$, we have 
$\alpha_{j_{\textrm{max}}}:=\max_{j=1,...,d}\alpha_j \geq \frac{\ell}{d}$, for all $\ell \in \N$. But then, for any chosen $x \in X$ and for all $ \varepsilon > 0$, 
there exists an $M_{\varepsilon}(x) \in \N$ such that
$\norm{T_{j_{\textrm{max}}}^{\alpha_{j_{\textrm{max}}}+\gamma_{j_{\textrm{max}}}}
(T'_{j_{\textrm{max}}})^{\gamma'_{j_{\textrm{max}}}}x} \leq \varepsilon$ 
for all $\alpha \in \N^d$
with $|\alpha|=\ell$, for all $\ell \geq M_{\varepsilon}(x)$. \footnote{Of course, the index $j_{\textrm{max}}$ is not 
uniquely determined and may also be different for every $\alpha$.} 

Therefore, 
\begin{align*}
	\norm{T^{\alpha}T^{\gamma}x} &= \norm{(T'_{j_{\textrm{max}}})^{\alpha'_{j_{\textrm{max}}}}
	T_{j_{\textrm{max}}}^{\alpha_{j_{\textrm{max}}}+\gamma_{j_{\textrm{max}}}}
	(T'_{j_{\textrm{max}}})^{\gamma'_{j_{\textrm{max}}}}x} \\
	&\leq \norm{(T'_{j_{\textrm{max}}})^{\alpha'_{j_{\textrm{max}}}}} \cdot 
	\norm{T_{j_{\textrm{max}}}^{\alpha_{j_{\textrm{max}}}+\gamma_{j_{\textrm{max}}}}
	(T'_{j_{\textrm{max}}})^{\gamma'_{j_{\textrm{max}}}}x}  \\
	&\leq \norm{(T'_{j_{\textrm{max}}})^{\alpha'_{j_{\textrm{max}}}}} \cdot \varepsilon, \ \ 
	\forall \alpha \in \N^d \ \ \textrm{with} \ \ |\alpha|=\ell, \ \forall \ \ell \geq M_{\varepsilon}(x).
\end{align*}
Now, since $T$ is a $(1,\infty)$-isometric tuple, $\norm{T_j} \leq 1$, for all $j \in \{1,...,d\}$.
Thus, $\norm{T^{\alpha}T^{\gamma}x} \leq \varepsilon$, for all $\alpha \in \N^d$ with $|\alpha|=\ell$, 
for all $\ell \geq M_{\varepsilon}(x)$.

Then
\begin{align*}
	\norm{T^{\gamma}x}= \max_{|\alpha|=\ell} \norm{T^{\alpha}T^{\gamma}x} \rightarrow 0, \ \textrm{as} \ \ell 
	\rightarrow \infty.
\end{align*}
Since $x$ was chosen arbitrarily, $T^{\gamma}=0$ follows.

Now consider the case $\mu > 1$ and let $T$ be a $(\mu, \infty)$-isometry. By Theorem \ref{equivalent norm (m,inf)}, $T$
is a $(1,\infty)$-isometric tuple with respect to the norm $|.|_{\infty}$ on $X$, where $|.|_{\infty}$ is
equivalent to $\norm{.}$. Hence, for any $x \in X$, any $\beta \in \N^{d-1}$ with $|\beta| \geq m$ and any 
$j \in \{1,...,d\}$, $T_j^n(T'_j)^{\beta}x$ converges to $0$ for $n \rightarrow \infty$ under 
$|.|_{\infty}$. By repeating the argument from above \footnote{Note that we do not have to assume that $T$ is an 
$(m,p)$-isometry w.r.t. $|.|_{\infty}$}, we then get that
\begin{align*}
	|T^{\gamma}x|_{\infty}= \max_{|\alpha|=\ell} |T^{\alpha}T^{\gamma}x|_{\infty} \rightarrow 0, \ \textrm{as} \ \ell 
	\rightarrow \infty.
\end{align*}
Again, $T^{\gamma}=0$ follows.
\end{proof}

\begin{corollary}\label{Cor T_i^mT_j^m=0}
Let $T =(T_1,...,T_d) \in B(X)^d$ be an $(m,p)$-isometric tuple for some $m \geq 1$ and also a $(\mu, \infty)$-isometric 
tuple. If $T^{\alpha} \neq 0$ with $|\alpha|=n$, then $\alpha$ is a permutation of 
$(n-|\beta|,\beta_1,...,\beta_{d-1})$, where $|\beta|\leq m-1$. I.e., 
$T^{\alpha}=T_j^{n-|\beta|}(T'_j)^{\beta}$ for some $j \in \{1,...,d\}$ and some $\beta \in \N^{d-1}$  with $|\beta|\leq m-1$. \footnote{Of course, we actually have $|\beta|\leq \min\{n, m-1\}$. The main point is, however, that $|\beta|\leq m-1$.}
\end{corollary}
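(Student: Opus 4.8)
The plan is to read this off directly from Lemma \ref{T_i^mT_j^m=0} by testing its hypothesis at the index of a largest component of $\alpha$. Throughout I use that $T$ is simultaneously $(m,p)$-isometric (with $m\geq 1$) and $(\mu,\infty)$-isometric, so Lemma \ref{T_i^mT_j^m=0} applies.

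First I would fix $\alpha\in\N^d$ with $|\alpha|=n$ and choose $j_0\in\{1,\dots,d\}$ with $\alpha_{j_0}=\max_{j=1,\dots,d}\alpha_j$. Put $\beta:=\alpha'_{j_0}\in\N^{d-1}$. Then $n-|\beta|=n-(n-\alpha_{j_0})=\alpha_{j_0}$, so $(n-|\beta|,\beta_1,\dots,\beta_{d-1})$ is literally a permutation of $\alpha$ and $T^{\alpha}=T_{j_0}^{n-|\beta|}(T'_{j_0})^{\beta}$. Thus the only thing left to prove is the bound $|\beta|\leq m-1$.

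I would establish this by contraposition: assume $|\beta|=|\alpha'_{j_0}|\geq m$. Since $\alpha_{j_0}$ is a largest entry, $\alpha_j\leq\alpha_{j_0}$ for every $j$, and therefore
\begin{align*}
	|\alpha'_j| = n-\alpha_j \geq n-\alpha_{j_0} = |\alpha'_{j_0}| \geq m, \qquad \forall j\in\{1,\dots,d\}.
\end{align*}
Hence $\alpha$ satisfies the hypothesis of Lemma \ref{T_i^mT_j^m=0}, which forces $T^{\alpha}=0$. Equivalently, if $T^{\alpha}\neq 0$ then $|\alpha'_{j_0}|\leq m-1$, which is exactly the assertion of the corollary with $j=j_0$ and $\beta=\alpha'_{j_0}$ (and clearly then $|\beta|\leq\min\{n,m-1\}$, as remarked in the footnote).

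I do not expect a genuine obstacle here: the entire content is the observation that among the "punctured sums" $|\alpha'_j|$ the smallest is obtained by deleting a maximal coordinate, so that $|\alpha'_{j_0}|\geq m$ propagates to all indices and triggers Lemma \ref{T_i^mT_j^m=0}. The only things worth a line of care are the degenerate cases ($d=1$, where the statement is vacuous, and $n\leq m-1$, where $\beta=\alpha'_{j_0}$ already has small norm and nothing needs to be deleted).
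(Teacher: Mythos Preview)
Your proof is correct and rests on the same key ingredient as the paper, namely Lemma \ref{T_i^mT_j^m=0}. Your execution is in fact cleaner: by selecting $j_0$ as an index of a \emph{maximal} coordinate from the outset, you obtain immediately that $|\alpha'_{j_0}|\geq m$ forces $|\alpha'_j|\geq m$ for every $j$, triggering the Lemma in one step. The paper instead starts from an arbitrary index $j$ with $\alpha_j\neq 0$, and when $|\alpha'_j|\geq m$ must run a further case split on whether the largest entry of $\alpha'_j$ exceeds $n-m$, possibly switching to that index and invoking the Lemma again. Both arguments are the same in spirit; yours simply makes the optimal choice of index up front and thereby avoids the detour.
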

\begin{proof}
If $d=1$ there is nothing to show, so assume $d \geq 1$.

Let $|\alpha|=n$ and chose a $j \in \{1,...,d\}$ with $\alpha_j \neq 0$. Then we can write $\alpha_j=n-|\beta|$ and 
$T^{\alpha}=T_j^{n-|\beta|}(T'_j)^{\beta}$ for $\beta=\alpha'_j \in \N^{d-1}$. We have to show that, if $T^{\alpha} \neq 0$, then 
$|\beta|\leq m-1$, or we can reorder and write $T^{\alpha}=T_k^{n-|\tilde{\beta}|}(T'_k)^{\tilde{\beta}}$ 
for some $k \in \{1,...,d\}$ and some $\tilde{\beta} \in \N^{d-1}$ with $|\tilde{\beta}| \leq m-1$.

Since $n-|\beta| = \alpha_j \geq 1$, the statement holds trivially if $n \leq m$. So assume $n \geq m+1$.

Certainly, by Lemma \ref{T_i^mT_j^m=0}, if $n-|\beta| \geq m$ and $|\beta| \geq m$, 
$T_j^{n-|\beta|}(T'_j)^{\beta}=0$. This means, if $T^{\alpha} \neq 0$, we have we must have 
$|\beta| \leq m-1$ or $|\beta| \geq n-m+1$.

If we have $|\beta| \leq m-1$ we are done, so assume $|\beta|\geq m$ and $|\beta| \geq n-m+1$.

Now, if the biggest entry of $\beta$, 
$\beta_{j_{\textrm{max}}}:=\max_{j=1,...,d-1}\beta_j$ (where again, the index $j_{\textrm{max}}$ is not 
necessarily unique) satisfies 
$\beta_{j_{\textrm{max}}} \geq n-m+1$, then we have\\ 
$- \beta_{j_{\textrm{max}}} + n =  |\beta'_{j_{\textrm{max}}}| + n - |\beta| \leq m-1$ and we 
let $\tilde{\beta}$ be a multi-index consisting of the entries of $\beta'_{j_{\textrm{max}}}$ and the entry 
$n - |\beta|$ w.r.t. some permutation. 
Then $T^{\alpha}=T_{j_{\textrm{max}}}^{\beta_{j_{\textrm{max}}}}
\left(T'_{j_{\textrm{max}}}\right)^{\tilde{\beta}} = 
T_{j_{\textrm{max}}}^{n-|\tilde{\beta}|}\left(T'_{j_{\textrm{max}}}\right)^{\tilde{\beta}}$ with 
$|\tilde{\beta}| \leq m-1$.

If instead $\beta_{j_{\textrm{max}}} \leq n-m$, then $|\beta'_i| + n - |\beta| \geq m$ for any entry 
$\beta_i$ of $\beta$. \footnote{Note that we must have $d-1 \geq 2$ in this case so that the expression $|\beta'_i|$ 
makes sense.} Since, by assumption $|\beta| \geq m$, this means $T^{\alpha}=T_j^{n-|\beta|}(T'_j)^{\beta} = 0$, by 
Lemma \ref{T_i^mT_j^m=0}.
\end{proof}

We are now able to answer our initial question for the case $m \in \N$ and $\mu=1$. That is, we can determine the 
intersection class of $(m,p)$- and $(1,\infty)$-isometric tuples on a given space $X$. 

\begin{theorem}\label{(m,p) and (1,inf)}
Let $T =(T_1,...,T_d) \in B(X)^d$ be an $(m,p)$-isometric tuple and also a $(1, \infty)$-isometric
tuple. Then one operator $T_{j_0}$ is an isometry and all other operators 
satisfy $(T_{j_0}')^{\beta}=0$ for all $\beta \in \N^{d-1}$ with $|\beta|=m$, and are, in particular, nilpotent of order 
$\leq m$.
\end{theorem}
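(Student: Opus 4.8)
The plan is to reduce the statement to the assertion that \emph{some} $T_{j_0}$ is an isometry. Indeed, once we know that $T_{j_0}$ is an isometry for some $j_0\in\{1,\dots,d\}$, then, since $T$ is by hypothesis an $(m,p)$-isometry, Proposition~\ref{isometry with nilpotent} immediately yields $(T'_{j_0})^{\beta}=0$ for all $\beta\in\N^{d-1}$ with $|\beta|=m$; choosing $\beta$ to be the multi-index with the single entry $m$ in the position of $T_i$ gives $T_i^m=0$, i.e.\ nilpotency of order $\leq m$ for every $T_i$ with $i\neq j_0$. So from now on the only goal is to produce one isometric $T_{j_0}$.

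For this, by Lemma~\ref{when (1,inf) implies isometry} (which applies because $T$, being a $(1,\infty)$-isometry, is an $\ell_\infty$-spherical isometry), it suffices to exhibit a single $\nu\in\N$ with $X_j\subseteq\ker T_i^{\nu}$ for all $i\neq j$, where $X_j=\{x\in X:\norm{x}=\norm{T_j^nx}\ \forall n\in\N\}$ as in Remark~\ref{remark if T (m,infty)-tuple, X is union}. I claim $\nu=m$ works. Fix $j$, fix $x\in X_j$ and fix $i\neq j$; we must show $T_i^mx=0$. Applying Proposition~\ref{existence of j_x for (1,inf)} to the vector $T_i^mx$ gives a $k\in\{1,\dots,d\}$ with $\norm{T_k^n(T_i^mx)}=\norm{T_i^mx}$ for all $n\in\N$. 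If $k\neq i$, then, taking $n=m$ and using $T_k^mT_i^m=0$ from Lemma~\ref{T_i^mT_j^m=0}, we get $\norm{T_i^mx}=\norm{T_k^mT_i^mx}=0$, as wanted.

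The remaining (and main) obstacle is the case $k=i$, where $\norm{T_i^{\ell}x}=\norm{T_i^mx}$ for every $\ell\geq m$, so that the orbit $(\norm{T_i^{\ell}x})_\ell$ is eventually constant; one must rule out this constant being positive. The idea is to play this rigidity off against the asymptotic decay $T_i^n(T'_i)^{\gamma}x\to0$ for $|\gamma|\geq m$ supplied by Corollary~\ref{asymptotic}.(ii), together with Corollary~\ref{Cor T_i^mT_j^m=0}, which forces every nonzero monomial to be concentrated in a single variable with the other exponents summing to at most $m-1$. Applying this to monomials of the form $T^{\alpha}T_i^mx$, together with a further appeal to Proposition~\ref{existence of j_x for (1,inf)} and the relations $T_r^mT_i^m=0$ for $r\neq i$, one pushes all the ``mass'' onto powers of $T_i$, where the decay estimate contradicts $\norm{T_i^{\ell}x}=\norm{T_i^mx}$ unless $T_i^mx=0$. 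Once $X_j\subseteq\ker T_i^m$ is established for all $i\neq j$, Lemma~\ref{when (1,inf) implies isometry} provides an isometric $T_{j_0}$ and Proposition~\ref{isometry with nilpotent} completes the argument as indicated above. I expect the genuinely delicate point to be precisely this case $k=i$: the combinatorial bookkeeping that reduces a general monomial to a pure power of $T_i$, while controlling which factors may be discarded, is where the proof is most technical.
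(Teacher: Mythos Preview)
Your overall framework matches the paper's: reduce to producing an isometric $T_{j_0}$ via Lemma~\ref{when (1,inf) implies isometry}, then invoke Proposition~\ref{isometry with nilpotent}. The case $k\neq i$ is handled correctly and cleanly. However, your treatment of the case $k=i$ is a genuine gap, and the sketch you give cannot close it. The decay estimate from Corollary~\ref{asymptotic}.(ii) asserts $T_i^n(T'_i)^{\gamma}y\to0$ \emph{only when} $|\gamma|\geq m$. But, as you yourself observe, Corollary~\ref{Cor T_i^mT_j^m=0} forces every nonzero monomial $T^{\alpha}T_i^m$ to satisfy $|\alpha'_i|\leq m-1$; so once you have ``pushed all the mass onto powers of $T_i$'' the side multi-index has norm at most $m-1$, and the decay estimate simply does not apply. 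Nothing in your toolbox forbids $\|T_i^{\ell}x\|$ from being a positive constant for $\ell\geq m$: Corollary~\ref{asymptotic}.(i) only gives $O(n^{m-1})$, which is perfectly compatible with a nonzero constant sequence. Your two pieces of information (concentration on $T_i$ and decay for $|\gamma|\geq m$) point in opposite directions and never meet.

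The paper does not attempt a direct orbit argument for $X_j\subseteq\ker T_i^{\nu}$. Instead it uses the $(m,p)$-hypothesis quantitatively. By Proposition~\ref{Newtonform}.(i), $n\mapsto Q^n(x)$ agrees with a polynomial of degree $\leq m-1$. For $n\geq 2m-1$, Corollary~\ref{Cor T_i^mT_j^m=0} reduces $Q^n(x)$ to a sum over monomials $T_j^{n-|\beta|}(T'_j)^{\beta}$ with $|\beta|\leq m-1$; and the very argument you gave for $k\neq i$, applied to each such monomial (here $n-|\beta|\geq m$, so only $\ell_j=j$ survives), shows each $\|T_j^{n-|\beta|}(T'_j)^{\beta}x\|$ is eventually constant in $n$. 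This yields a \emph{second} polynomial in $n$ (via the coefficients $n^{(|\beta|)}/\beta!$) interpolating $Q^n(x)$. Equating constant terms of the two polynomials gives the identity $\sum_{j=1}^d\|T_j^{2m-1}x\|^p=\|x\|^p$ for all $x\in X$. For $x\in X_{j_0}$ one has $\|T_{j_0}^{2m-1}x\|=\|x\|$, whence the identity forces $T_j^{2m-1}x=0$ for every $j\neq j_0$, i.e.\ $X_{j_0}\subseteq\ker T_j^{2m-1}$ with $\nu=2m-1$ (not $m$). The crucial point you are missing is that the $(m,p)$-condition must be used for the exact polynomial interpolation it provides, not merely for its asymptotic consequences.
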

\begin{proof}
Again, we can assume that $m \geq 1$.

Since $T$ is an $(m,p)$-isometric tuple, by Proposition \ref{Newtonform}.(i),
\begin{align}\label{recursion in (m,p) and (1,inf)}
	Q^{n}(x)=\sum_{k=0}^{m-1}n^{(k)}\left(\frac{1}{k!}P_k(x)\right), \ \ \forall x \in X, \ \forall n \in \N.
\end{align}
Where $n^{(k)}=\binom{n}{k}k!=n (n-1) ... (n-k+1)$. That is, for all $x \in X$,
the sequence $\left(Q^n(x)\right)_{n \in \N}$ is interpolated by a polynomial of 
degree of less or equal to $m-1$.

Now, by Corollary \ref{Cor T_i^mT_j^m=0} above, for $n \geq 2m-1$, $n \in \N$, $Q^n(x)$ reduces to
\begin{align*} 
	Q^n(x)= \sum_{\substack{\beta \in \N^{d-1} \\ |\beta|=0,...,m-1}} \sum_{j=1}^d 
	\frac{n!}{(n-|\beta|)!\beta!} \norm{T_j^{n-|\beta|}(T'_j)^{\beta}x}^p,
 	\ \ \forall x \in X,
\end{align*}
where $\frac{n!}{(n-|\beta|)!\beta!}= \frac{n^{(|\beta|)}}{\beta!}$. (We set $n \geq 2m-1$, so that we don't get any multi-indices twice in this expression.) 
 
Further, for each $n \in \N$, $k \in \{0,...,m-1\}$, $\beta \in \N^{d-1}$, $j \in \{1,...,d\}$ and all $x \in X$, by 
Proposition \ref{existence of j_x for (1,inf)}, there exists an $\ell_j \in \{1,...,d\}$, such that
\begin{align*}
\norm{T_{\ell_j}^{\nu}\left(T_j^{n-k}(T'_j)^{\beta}x\right)}
= \norm{T_j^{n-k}(T'_j)^{\beta}x}, \ \  \forall \ \nu \in \N.
\end{align*}
By Corollary \ref{Cor T_i^mT_j^m=0}, for $n \geq 2m-1$, $n \in \N$, we must have $\ell_j = j$, i.e. 
\begin{align*}
\norm{T_j^{\nu+n-k}(T'_j)^{\beta}x} 
= \norm{T_j^{n-k}(T'_j)^{\beta}x}, \ \ \forall \ n, \nu \in \N, \ n \geq 2m-1.
\end{align*}
But that means that, for all $k \in \{0,...,m-1\}$, $\beta \in \N^{d-1}$, $j \in \{1,...,d\}$ and all $x \in X$, the 
sequences $\left(\norm{T_j^{n-k}(T'_j)^{\beta}x}^p\right)_{n \in \N}$ becomes constant for $n \geq 2m-1$.

Therefore, for all $x \in X$, for $n \geq 2m-1$, the sequence $\left(Q^n(x)\right)_{n \in \N}$ is
interpolated by the polynomial 
\begin{align*} 
	n \mapsto 
	\sum_{\substack{\beta \in \N^{d-1} \\ |\beta|=0,...,m-1}} \sum_{j=1}^d 
	\frac{n^{(|\beta|)}}{\beta!}\norm{T_j^{2m-1-|\beta|}(T'_j)^{\beta}x}^p,
\end{align*}
which is of degree less or equal to $m-1$.

However, this polynomial must be the same as the one in \eqref{recursion in (m,p) and (1,inf)}. In particular, their 
coefficients have to be equal and, more particularly, equating constants, we must have 
\begin{align*}
\sum_{j=1}^d \norm{T_j^{2m-1}x}^p = \norm{x}^p, \ \ \forall x \in X.
\end{align*}
Take now $j_0 \in \{1,...,d\}$ and $x_{j_0} \in X_{j_0}$, for $X_{j_0}$ defined as in Remark 
\ref{remark if T (m,infty)-tuple, X is union}. 
Then $\sum_{\substack{j=1 \\ j \neq j_0}}^d \norm{T_j^{2m-1}x_{j_0}}^p = 0$ and, thus, 
$x_{j_0} \in \ker T_j^{2m-1}$ for all $j \in \{1,...,d\}$ with $j \neq j_0$.

Since $x_{j_0} \in X_{j_0}$ and $j_0 \in \{1,...,d\}$ were chosen arbitrarily, $X_j \in \ker T_i^{2m-1}$ for all 
$i \neq j$. Then it follows from Lemma \ref{when (1,inf) implies isometry} that one of the 
operators $T_1,...,T_d$ is an isometry.

Let $T_{j_0}$ be isometric. Then we have $(T_{j_0}')^{\beta}=0$ for all $\beta \in \N^{d-1}$ with $|\beta|=m$ by 
Proposition \ref{isometry with nilpotent}.
\end{proof}

The case $m=1$ and $\mu \in \N$, $\mu \geq 1$, now follows easily.\footnote{It is actually easy to find an elementary 
proof for Corollay \ref{(1,p) and (m,infty)}. However, it is 
more elegant to deduce the statement from Proposition \ref{(m,p) and (1,inf)}.}

\begin{corollary}\label{(1,p) and (m,infty)} 
Let $T=(T_1,...,T_d) \in B(X)^d$ be a $(1,p)$-isometric tuple and also a $(\mu,\infty)$-isometric tuple. Then one 
operator $T_{j_0}$ is an isometry and $T=(0,...,0,T_{j_0},0,...,0)$.
\end{corollary}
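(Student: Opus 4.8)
The plan is to deduce the statement from Theorem~\ref{(m,p) and (1,inf)} (applied with $m=1$), as suggested by the footnote, by first upgrading the $(\mu,\infty)$-isometry hypothesis to a genuine $(1,\infty)$-isometry hypothesis with respect to the \emph{original} norm $\norm{.}$. The case $d=1$ is trivial, since a $(1,p)$-isometric single operator is an isometry; so assume $d\geq 2$.

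First I would apply Corollary~\ref{Cor T_i^mT_j^m=0} with $m=1$: since $T$ is $(1,p)$-isometric and $(\mu,\infty)$-isometric, any nonzero monomial $T^{\alpha}$ with $|\alpha|=n$ must have $\alpha$ equal to a permutation of $(n,0,\dots,0)$, because the multi-index $\beta$ appearing in that corollary satisfies $|\beta|\leq m-1=0$. In other words, every nonzero $T^{\alpha}$ is a pure power $T_j^{n}$ of a single coordinate operator; in particular, taking $|\alpha|=2$ gives $T_iT_j=0$ whenever $i\neq j$. Secondly, from the $(1,p)$-isometry identity $\sum_{i=1}^d\norm{T_ix}^p=\norm{x}^p$, all summands being nonnegative, one reads off $\norm{T_jx}\leq\norm{x}$, hence $\norm{T_j}\leq 1$ and therefore $\norm{T_j^{k}x}\leq\norm{x}$ for every $k\in\N$ and every $x\in X$. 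Combining these two observations, $\norm{T^{\alpha}x}\leq\norm{x}$ holds for \emph{every} $\alpha\in\N^d$ and every $x\in X$, since $T^{\alpha}$ is either $0$ or some $T_j^{|\alpha|}$.

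Next I would invoke Theorem~\ref{equivalent norm (m,inf)}: because $T$ is $(\mu,\infty)$-isometric, the expression $|x|_{\infty}:=\max_{\alpha\in\N^d}\norm{T^{\alpha}x}$ defines a norm equivalent to $\norm{.}$ under which $T$ is an $\ell_{\infty}$-spherical isometry. But the bound from the previous paragraph shows that this maximum is attained at $\alpha=0$, so $|x|_{\infty}=\norm{x}$ for all $x\in X$. Hence $T$ is an $\ell_{\infty}$-spherical isometry — that is, a $(1,\infty)$-isometric tuple — with respect to $\norm{.}$ itself.

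Finally, $T$ is now simultaneously a $(1,p)$-isometric and a $(1,\infty)$-isometric tuple, so Theorem~\ref{(m,p) and (1,inf)} applies with $m=1$: some operator $T_{j_0}$ is an isometry, and the remaining operators satisfy $(T_{j_0}')^{\beta}=0$ for all $\beta\in\N^{d-1}$ with $|\beta|=1$, i.e.\ $T_i=0$ for every $i\neq j_0$. Thus $T=(0,\dots,0,T_{j_0},0,\dots,0)$, as claimed. The one substantive step is the collapse $|.|_{\infty}=\norm{.}$, which hinges on the structural fact that nonzero monomials of $T$ are pure powers (Corollary~\ref{Cor T_i^mT_j^m=0}) together with the contractivity $\norm{T_j}\leq 1$; everything else is bookkeeping.
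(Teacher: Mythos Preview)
Your proof is correct and follows the same overall strategy as the paper: establish that $T$ is already a $(1,\infty)$-isometric tuple with respect to the original norm (by showing $\max_{\alpha\in\N^d}\norm{T^{\alpha}x}=\norm{x}$, so that $|.|_{\infty}=\norm{.}$ via Theorem~\ref{equivalent norm (m,inf)}), and then invoke Theorem~\ref{(m,p) and (1,inf)} with $m=1$.

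The one noteworthy difference is in how you obtain the bound $\norm{T^{\alpha}x}\leq\norm{x}$ for all $\alpha$. You route this through Corollary~\ref{Cor T_i^mT_j^m=0} (nonzero monomials are pure powers) combined with the contractivity $\norm{T_j}\leq 1$; this works, but it brings in the $(\mu,\infty)$ hypothesis at an earlier stage than necessary. The paper instead uses only the $(1,p)$-isometry property: Proposition~\ref{Newtonform}.(i) with $m=1$ gives $Q^{n}(x)=\sum_{|\alpha|=n}\frac{n!}{\alpha!}\norm{T^{\alpha}x}^{p}=\norm{x}^{p}$ for every $n$, and since every summand is nonnegative, $\norm{T^{\alpha}x}\leq\norm{x}$ for \emph{every} $\alpha\in\N^d$ follows directly, with no appeal to Corollary~\ref{Cor T_i^mT_j^m=0}. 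This is shorter and shows that the bound is a feature of the $(1,p)$ condition alone; your detour is harmless but unnecessary.
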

\begin{proof}
Since $T$ is a $(1,p)$-isometric tuple, Proposition \ref{Newtonform}.(i) gives,\\
$\sum_{|\alpha|=n}\frac{n!}{\alpha!}\norm{T^{\alpha}x}^p=\norm{x}^p$, for all $n \in \N$ and all $x \in X$.
Consequently, $\norm{x} \geq \norm{T^{\alpha}x}$, for any multi-index $\alpha \in \N^d$, for all $x \in X$. I.e.
\begin{align*}
	\max_{\alpha \in \N^d}\norm{T^{\alpha}x} = \norm{x}, \ \forall x \in X.
\end{align*}
Then, since $T$ is a $(\mu,\infty)$-isometric tuple, $T$ is already a
$(1,\infty)$-isometric tuple, by Theorem \ref{equivalent norm (m,inf)}. The result now follows from the preceding statement and Proposition 
\ref{isometry with nilpotent}.
\end{proof}

We can now prove the case where our tuple is constructed, using an $(m,p)$-isometric operator (i.e., by applying 
Proposition \ref{proposition example (m,p)-tuples made of (m,p)-operators}).

\begin{proposition}\label{(m,p) and (mu,inf) made of m-isometries}
Let $T=(T_1,...,T_d) \in B(X)^d$ be an $(m,p)$-isometric tuple of the form of 
Proposition \ref{proposition example (m,p)-tuples made of (m,p)-operators}. That is, $T_j=z_jS$, where $S \in B(X)$ is 
an $(m,p)$-isometric operator and $z:=(z_1,...,z_d) \in \K^d$ with $\norm{z}_{p}=1$. Assume further that $T$ is 
additionally a $(\mu,\infty)$-isometric tuple. Then 
$T=(0,...,0,z_{j_0}S,0,...,0)$ with $|z_{j_0}|=1$ and $S$ being an isometry. In particular, $T_{j_0}=z_{j_0}S$ is an isometry.  
\end{proposition}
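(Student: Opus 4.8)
The plan is to exploit the multiplicative structure $T_j = z_j S$ twice: first, via Lemma~\ref{T_i^mT_j^m=0}, to force the vector $z$ to be supported on a single coordinate, and second, to transfer the $(\mu,\infty)$-isometry property of the tuple $T$ back to the operator $S$, so that \cite[Proposition 6.1]{HoMaOs} applies. As in the earlier proofs we may assume $m \geq 1$ and $X \neq \{0\}$.

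First I would note that $S$ is injective. Being an $(m,p)$-isometric operator, $S$ has the property that $(\norm{S^n x}^p)_{n \in \N}$ agrees on $\N$ with a polynomial of degree $\leq m-1$ (the operator version of Theorem~\ref{existence of family of polynomials}, due to Bayart \cite[Proposition 2.1]{ba}); if $Sx = 0$, that polynomial vanishes at every $n \geq 1$, hence identically, forcing $\norm{x} = 0$. Consequently $S^{2m}$ is injective, and in particular $S^{2m} \neq 0$.

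Since $T$ is simultaneously an $(m,p)$- and a $(\mu,\infty)$-isometric tuple, Lemma~\ref{T_i^mT_j^m=0} yields $T_i^m T_j^m = 0$ for all $i \neq j$, that is, $z_i^m z_j^m S^{2m} = 0$; as $S^{2m} \neq 0$ this gives $z_i^m z_j^m = 0$, so at most one coordinate of $z$ is nonzero. Because $\pnorm{z}{p}^p = \sum_{j=1}^d |z_j|^p = 1$, there is exactly one nonzero coordinate $z_{j_0}$, with $|z_{j_0}| = 1$; for $i \neq j_0$ we get $T_i = z_i S = 0$, so $T = (0,\dots,0,z_{j_0}S,0,\dots,0)$.

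Finally I would check that $S$, and hence $T_{j_0} = z_{j_0}S$ (as $|z_{j_0}|=1$), is an isometry. Since $T_i = 0$ for all $i \neq j_0$, for any $\alpha \in \N^d$ the product $T^\alpha$ vanishes unless $\alpha_i = 0$ for all $i \neq j_0$, in which case $\norm{T^\alpha x} = \norm{T_{j_0}^{\alpha_{j_0}}x} = \norm{S^{\alpha_{j_0}}x}$; hence $\max_{|\alpha|=k}\norm{T^\alpha x} = \norm{S^k x}$ for all $k \in \N$ and $x \in X$. Substituting this into the defining equality of a $(\mu,\infty)$-isometric tuple shows $\max_{k=0,\dots,\mu,\ k\text{ even}}\norm{S^k x} = \max_{k=0,\dots,\mu,\ k\text{ odd}}\norm{S^k x}$ for all $x \in X$, i.e.\ $S$ is a $(\mu,\infty)$-isometric operator. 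Thus $S$ is both an $(m,p)$- and a $(\mu,\infty)$-isometric operator; passing if necessary to the common index $\max\{m,\mu\}$ (an $(m,p)$-isometric operator is $(m',p)$-isometric for $m' \geq m$, and likewise in the $\infty$ case), \cite[Proposition 6.1]{HoMaOs} forces $S$ to be an isometry. The crux of the argument is the first step — that $z$ collapses to a single coordinate — which Lemma~\ref{T_i^mT_j^m=0} and the injectivity of $S$ supply; the remainder is routine bookkeeping.
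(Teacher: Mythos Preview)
Your proof is correct, and it follows a genuinely different route from the paper's.

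The paper argues in the opposite order: it first uses Corollary~\ref{max (m,inf)} (the $(\mu,\infty)$-isometry property) to bound the family $(|z^{\alpha}|\,\|S^{|\alpha|}x\|)_{\alpha}$ and from this deduces that $(\|S^{n}x\|)_{n}$ is bounded, so that $S$ is an isometry by \cite[Proposition~2.1]{HoMaOs}; then $T$ becomes a $(1,p)$-isometric tuple by Proposition~\ref{proposition example (m,p)-tuples made of (m,p)-operators}, Corollary~\ref{(1,p) and (m,infty)} singles out an isometric component $z_{j_0}S$, and finally Proposition~\ref{isometry with nilpotent} kills the remaining $z_i$. Your argument reverses this: you invoke Lemma~\ref{T_i^mT_j^m=0} (valid since $T$ is both $(m,p)$- and $(\mu,\infty)$-isometric) together with the injectivity of $S$ to collapse $z$ to a single coordinate first, and only then reduce to the single-operator setting, where \cite[Proposition~6.1]{HoMaOs} finishes the job.

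Your route is shorter and more robust: it re-uses the heavy machinery of Lemma~\ref{T_i^mT_j^m=0} directly rather than passing through the chain Corollary~\ref{(1,p) and (m,infty)} $\to$ Proposition~\ref{isometry with nilpotent}, and it sidesteps the delicate passage from boundedness of $(|z^{\alpha}|\,\|S^{|\alpha|}x\|)_{\alpha}$ to boundedness of $(\|S^{n}x\|)_{n}$ that the paper's argument relies on (a step which is not entirely transparent when $\|z\|_{\infty}<1$). The paper's approach, on the other hand, establishes that $S$ is an isometry \emph{before} knowing anything about $z$, which is conceptually interesting in its own right.
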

\begin{proof}
Since $T$ is an $(\mu,\infty)$-isometry, for all $x \in X$, the family\\
$\left(\norm{T^{\alpha}x}\right)_{\alpha \in \N^d}=\left(|z^{\alpha}|\norm{S^{|\alpha|}x} \right)_{\alpha \in \N^d}$ 
attains its maximum. Since $\pnorm{z}{p}=1$ by assumption, we have $\max_{|\alpha|=n}|z^{\alpha}|=1$ for all $n \in \N$. 
This forces $(\norm{S^nx})_{n \in \N}$ to be bounded for every 
$x \in X$. Then, since $S$ is an $(m,p)$-isometric operator, $S$ is an isometry by
\cite[Proposition 2.1]{HoMaOs}. Hence, by Proposition 
\ref{proposition example (m,p)-tuples made of (m,p)-operators}, $T$ is a $(1, p)$-isometric tuple. Then Corollary 
\ref{(1,p) and (m,infty)} forces one operator $z_{j_0}S$ to be an isometry.

Now Proposition \ref{isometry with nilpotent} gives $(z_{i}S)^m=0$ for all $i \in \{1,...,d\}$ with $i \neq j_0$. Since 
$S$ is an isometry, this is only possible if $z_i=0$ for all $i \in \{1,...,d\}$ with $i \neq j_0$. Thus, 
$T=(0,...,0,z_{j_0}S,0,...,0)$ with $|z_{j_0}|=1$.
\end{proof}

More general results appear difficult to obtain at the moment. We present in the remaining parts some partial results in 
the case $m=\mu=2$.

The next lemma simply states that one cannot increase a maximum.

\begin{lemma}\label{for max index norms are equal}
Let $T=(T_1,...,T_d) \in B(X)^d$ be a $(\mu,\infty)$-isometric tuple. For each $x \in X$ and each
$\tilde{\alpha}(x) \in \N^d$ with $\max_{\alpha \in \N^d}\norm{T^{\alpha}x}=\norm{T^{\tilde{\alpha}(x)}x}$, we have 
$\norm{T^{\tilde{\alpha}(x)}x}=|T^{\tilde{\alpha}(x)}x|_{\infty}$.
\end{lemma}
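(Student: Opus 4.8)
The plan is to deduce everything from the description of $|.|_{\infty}$ furnished by Theorem \ref{equivalent norm (m,inf)}: applied to a $(\mu,\infty)$-isometric tuple it says that $|y|_{\infty} = \max_{\beta \in \N^d}\norm{T^{\beta} y}$ for every $y \in X$, and that this maximum is genuinely attained (Corollary \ref{max (m,inf)}), so that a multi-index $\tilde{\alpha}(x)$ as in the statement exists in the first place. Writing $\tilde{\alpha} = \tilde{\alpha}(x)$ for brevity, I would apply this identity to the vector $y = T^{\tilde{\alpha}}x$ and use commutativity of the $T_j$ to rewrite $T^{\beta}T^{\tilde{\alpha}} = T^{\beta+\tilde{\alpha}}$, obtaining $|T^{\tilde{\alpha}}x|_{\infty} = \max_{\beta \in \N^d}\norm{T^{\beta+\tilde{\alpha}}x}$.

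From here the two inequalities are immediate. Taking $\beta = 0$ gives $|T^{\tilde{\alpha}}x|_{\infty} \geq \norm{T^{\tilde{\alpha}}x}$. For the reverse inequality, observe that $\{\beta+\tilde{\alpha} : \beta \in \N^d\} \subseteq \N^d$, so $\max_{\beta \in \N^d}\norm{T^{\beta+\tilde{\alpha}}x} \leq \max_{\gamma \in \N^d}\norm{T^{\gamma}x}$; and by the defining property of $\tilde{\alpha}$ the right-hand side equals $\norm{T^{\tilde{\alpha}}x}$. Combining the two inequalities yields $|T^{\tilde{\alpha}}x|_{\infty} = \norm{T^{\tilde{\alpha}}x}$, as required.

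I do not expect a genuine obstacle here: the content is exactly the remark preceding the lemma that ``one cannot increase a maximum'', and the only point requiring a little care is making sure every maximum appearing in the argument is actually attained --- which is guaranteed by Corollary \ref{max (m,inf)} (equivalently, by the formula $|y|_{\infty} = \max_{|\alpha|=0,\dots,\mu-1}\norm{T^{\alpha}y}$ in Theorem \ref{equivalent norm (m,inf)}, a finite maximum). A marginal subtlety worth noting is merely that the index $\tilde{\alpha}$ need not be unique, but the conclusion holds for any such choice, which is precisely what the argument above delivers.
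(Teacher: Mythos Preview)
Your argument is correct and is essentially identical to the paper's: both sandwich $|T^{\tilde{\alpha}}x|_{\infty}$ between $\norm{T^{\tilde{\alpha}}x}$ (from $\beta=0$) and $\max_{\gamma\in\N^d}\norm{T^{\gamma}x}=\norm{T^{\tilde{\alpha}}x}$ (from the inclusion $\tilde{\alpha}+\N^d\subseteq\N^d$), using the identity $|y|_{\infty}=\max_{\beta\in\N^d}\norm{T^{\beta}y}$ from Theorem~\ref{equivalent norm (m,inf)}. The paper simply records this as a single chain of inequalities.
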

\begin{proof}
Fix $x \in X$ and let $\tilde{\alpha}(x) \in \N^d$, such that 
$\max_{\alpha \in \N^d}\norm{T^{\alpha}x}=\norm{T^{\tilde{\alpha}(x)}x}$. We have
\begin{align*}
	\norm{T^{\tilde{\alpha}(x)}x} \leq |T^{\tilde{\alpha}(x)}x|_{\infty}
	= \max_{\alpha \in \N^d}\norm{T^{\alpha}T^{\tilde{\alpha}(x)}x}
	\leq  \max_{\alpha \in \N^d}\norm{T^{\alpha}x} = \norm{T^{\tilde{\alpha}(x)}x}.
\end{align*}
\end{proof}

\begin{proposition}\label{prop (2,p) and (2,inf)}
Let $T=(T_1,...,T_d) \in B(X)^d$ be a $(2,p)$-isometric tuple and a $(2, \infty)$-isometric tuple.
\begin{itemize}
\item[(i)] The sequences $\left(\norm{T^n_jx}\right)_{\substack{n \in \N \\ n \geq 2}}$ are constant for all $x \in X$, 
for all $j \in \{1,...,d\}$.
\item[(ii)] For all $n \geq 2$, $n \in \N$,
\begin{align*}
	\big \|\sum_{j=1}^d T_j^n x \big \|^p = \sum_{j=1}^d \norm{T_j^nx}^p = \norm{x}^p, \ \ \forall x \in X.
\end{align*}
In particular, the tuple $T^2:=(T_1^2,...,T_d^2)$ is an $\ell_p$-spherical isometry and the operator 
$\sum_{j=1}^d T_j^2$ is an isometry.
\item[(iii)] We have $T^2_{j_0}=0$ for some $j_0 \in \{1,...,d\}$.
\end{itemize}
\end{proposition}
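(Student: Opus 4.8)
The plan is as follows. Put $m=\mu=2$ and assume $d\geq 2$ (for $d=1$ the tuple is a single operator and \cite[Proposition 6.1]{HoMaOs} applies; note that (iii) really does need $d\geq 2$). Write $A_n(x):=\sum_{j=1}^{d}\norm{T_j^{n}x}^{p}$ and $S_n(x):=\sum_{i\neq j}\norm{T_j^{n-1}T_ix}^{p}$, the latter sum over ordered pairs $i\neq j$. First I would collect the algebraic input and then reduce everything to the constancy statement (i).

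Since $T$ is $(2,p)$-isometric, Proposition \ref{Newtonform}.(i) gives $Q^{n}(x)=\norm{x}^{p}+nP_1(x)$ with $P_1(x)=\sum_j\norm{T_jx}^{p}-\norm{x}^{p}\geq 0$ for all $n\in\N$, $x\in X$. Since $T$ is also $(2,\infty)$-isometric, Corollary \ref{Cor T_i^mT_j^m=0} shows that for $n\geq 3$ the only words of length $n$ that do not vanish are $T_j^{n}$ and $T_j^{n-1}T_i$ ($i\neq j$); reading off the multinomial coefficients,
\begin{align*}
Q^{n}(x)=A_n(x)+n\,S_n(x),\qquad n\geq 3,\ x\in X,
\end{align*}
and in particular $T_i^{2}T_j^{2}=0$ for $i\neq j$, hence $T_i^{a}T_j^{b}=0$ for all $i\neq j$ and $a,b\geq 2$. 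By Theorem \ref{equivalent norm (m,inf)} the norm $|x|_{\infty}:=\max_{|\alpha|\leq 1}\norm{T^{\alpha}x}$ is equivalent to $\norm{\cdot}$ and makes $T$ an $\ell_{\infty}$-spherical isometry; iterating $|T_jy|_{\infty}\leq|y|_{\infty}$ yields $\norm{T_j^{n}x}\leq|x|_{\infty}$ and $\norm{T_j^{n-1}T_ix}=\norm{T_i(T_j^{n-1}x)}\leq|T_j^{n-1}x|_{\infty}\leq|x|_{\infty}$, so $A_n(x)$ and $S_n(x)$ are bounded; combined with $A_n(x)-\norm{x}^{p}=n(P_1(x)-S_n(x))$ this forces $S_n(x)\to P_1(x)$.

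The heart of the argument is (i). Applying it to $T_j^{k}x$ shows it suffices to prove $\norm{T_j^{3}x}=\norm{T_j^{2}x}$ for every $x$, and I would do this through the pointwise $(2,\infty)$-structure: by Corollary \ref{max (m,inf)}, $\max_{|\alpha|=k}\norm{T^{\alpha}x}=|x|_{\infty}$ for every $k\geq 1$, and for $k\geq 3$ this maximum equals $\max\bigl(\max_l\norm{T_l^{k}x},\,\max_{i\neq l}\norm{T_l^{k-1}T_ix}\bigr)$. Combining the boundedness just established (which keeps each cross term under control), Lemma \ref{for max index norms are equal}, and the decomposition $X=\bigcup_jX_{j}$ of Remark \ref{remark if T (m,infty)-tuple, X is union} applied to $(X,|\cdot|_{\infty})$, one tracks whether the running maximum is carried by a pure power $T_l^{k}x$ or a cross term $T_l^{k-1}T_ix$, and rules out oscillation of the pure-power norms. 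This bookkeeping is the step I expect to be the main obstacle.

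Granting (i), I would finish quickly. Applying (i) to $T_ix$ gives $\norm{T_l^{n-1}T_ix}=\norm{T_l^{2}T_ix}$ for $n\geq 3$, so $S_n(x)$ is constant for $n\geq 3$, and $A_n(x)$ is constant for $n\geq 2$; inserting this into $A_n(x)+nS_n(x)=\norm{x}^{p}+nP_1(x)$ and comparing these affine functions of $n$ forces $A_n(x)=\norm{x}^{p}$ for $n\geq 2$ and $S_n(x)=P_1(x)$. Thus $\sum_j\norm{T_j^{n}x}^{p}=\norm{x}^{p}$ for all $n\geq 2$, which is the spherical part of (ii) and says $T^{2}$ is an $\ell_p$-spherical isometry; for the remaining identity, $T_l^{2}T_j^{n}=0$ for $l\neq j$, $n\geq 2$, so $T_l^{2}\bigl(\sum_jT_j^{n}x\bigr)=T_l^{n+2}x$, and applying the spherical identity to $\sum_jT_j^{n}x$ gives $\norm{\sum_jT_j^{n}x}^{p}=\sum_l\norm{T_l^{n+2}x}^{p}=\norm{x}^{p}=\sum_j\norm{T_j^{n}x}^{p}$, so in particular $\sum_jT_j^{2}$ is an isometry. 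Finally, for (iii) I would split cases: if $P_1\equiv 0$ then $T$ is $(1,p)$-isometric and Corollary \ref{(1,p) and (m,infty)} forces $T=(0,\dots,0,T_{j_0},0,\dots,0)$ with $T_{j_0}$ an isometry, hence $T_i^{2}=0$ for all $i\neq j_0$; if $P_1\not\equiv 0$, then for $x$ in the closure of the range of $T_{j_0}^{2}$ the spherical identity together with $T_i^{2}T_{j_0}^{2}=0$ gives $\norm{T_{j_0}^{2}x}=\norm{x}$, so each $T_{j_0}^{2}$ is an isometry on the closed invariant subspace $\overline{\mathrm{ran}\,T_{j_0}^{2}}$ and these subspaces intersect trivially, from which a contradiction argument against the $(2,\infty)$-condition produces a $j_0$ with $T_{j_0}^{2}=0$.
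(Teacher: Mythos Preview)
Your overall scaffolding is right, but the order in which you attack (i) is backwards, and that is where the real gap lies. You propose to prove $\|T_j^{3}x\|=\|T_j^{2}x\|$ directly by tracking whether the level-$k$ maximum of $\|T^{\alpha}x\|$ is a pure power or a cross term, and you openly flag this as ``the main obstacle''. It is: for $y=T_j^{2}x$ the word $T_iT_j^{2}$ need not vanish, so the $(2,\infty)$-structure at level $1$ gives you $\max(\|T_j^{3}x\|,\max_{i\neq j}\|T_iT_j^{2}x\|)=|T_j^{2}x|_{\infty}$, and Lemma~\ref{for max index norms are equal} alone does not let you separate the two competitors or compare the result to $\|T_j^{2}x\|$. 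The paper avoids this entirely by reversing the order: it proves constancy of the \emph{cross terms} first. For $y=T_i^{2}T_jx$ with $i\neq j$, Corollary~\ref{Cor T_i^mT_j^m=0} kills every $T_k y$ with $k\neq i$, so $\max_{|\alpha|=1}\|T^{\alpha}y\|=\|T_i^{3}T_jx\|$ is forced; Lemma~\ref{for max index norms are equal} then gives $\|T_i^{3}T_jx\|=|T_i^{3}T_jx|_{\infty}$, and iterating yields $\|T_i^{n}T_jx\|$ constant for $n\geq 3$. With $S_n(x)$ constant for $n\geq 4$ and $A_n(x)$ bounded, the affine identity forces $P_1(x)=S_n(x)$ and $A_n(x)=\|x\|^{p}$ for all $n\geq 4$. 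Only now does (i) drop out, by substituting $T_{j_0}^{\nu}x$ (with $\nu\geq 2$) for $x$ in $A_n(x)=\|x\|^{p}$ and using $T_j^{n}T_{j_0}^{\nu}=0$ for $j\neq j_0$. Once you have (i), your derivation of (ii) is fine and essentially matches the paper's.

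Your argument for (iii) is also incomplete: the case split on $P_1$ is not the issue, but ``a contradiction argument against the $(2,\infty)$-condition'' is not an argument. The paper's route is short and direct: from (ii) one has $\|x\|\geq\max_j\|T_j^{3}x\|$. If equality holds for some $j$, then (ii) forces $T_i^{3}x=0$ for all $i\neq j$. If the inequality is strict, then $|x|_{\infty}=\max_{|\alpha|=3}\|T^{\alpha}x\|$ must be a cross term $\|T_{i_0}^{2}T_{j_0}x\|$; applying (ii) to $T_{j_0}x$ and comparing with $|x|_{\infty}\geq\|T_{j_0}x\|$ yields $T_{j_0}^{3}x=0$. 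Either way $x\in\bigcup_j\ker T_j^{3}$, so $X$ is a finite union of closed subspaces, one of which must be all of $X$; then (i) upgrades $T_{j_0}^{3}=0$ to $T_{j_0}^{2}=0$.
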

\begin{proof}
We proof (i) and (ii) together.

(i) + (ii): Since $T$ is a $(2,p)$-isometric tuple, by Proposition \ref{Newtonform}.(i),
\begin{align}\label{recursion in (2,p) and (2,inf)}
	Q^{n}(x)=n P_1(x)+ \norm{x}^p, \ \ \forall x \in X, \ \forall n \in \N.
\end{align}
That is, for all $x \in X$, the sequence
$\left(Q_n(x)\right)_{n \in \N}$ is interpolated by a polynomial of 
degree less or equal to $1$.

Now, by Corollary \ref{Cor T_i^mT_j^m=0}, for $n \geq 2$, $n \in \N$, $Q^n(x)$ reduces to
\begin{align}\label{Q^n for (2,p) and (2,inf)}
	Q^n(x)= n \left(\sum_{i=1}^{d}\sum_{\substack{j=1 \\ j \neq i}}^{d}\norm{T_i^{n-1}T_jx}^p \right)
	+ \sum_{j=1}^d\norm{T_j^nx}^p,
 	\ \ \forall x \in X.
\end{align}
Since $T$ is a $(2,\infty)$-isometry, for all $x \in X$, for all $i,j \in \{1,...,d\}$,\\
$\max_{\alpha \in \N^d}\norm{T^{\alpha}T_i^2T_jx}=\max_{|\alpha|=1}\norm{T^{\alpha}T_i^2T_jx}$, by Corollary 
\ref{max (m,inf)}. If $i \neq j$, by Corollary \ref{Cor T_i^mT_j^m=0}, we deduce that 
$\max_{|\alpha|=1}\norm{T^{\alpha}T_i^2T_jx}=\norm{T_i^3T_jx}$, for all $x \in X$.

However, then Lemma \ref{for max index norms are equal} gives that $\norm{T_i^3T_jx}=|T_i^3T_jx|_{\infty}$, 
for all $x \in X$. But then, for all $x \in X$,
\begin{align*}
	\norm{T_i^3T_jx}=|T_i^3T_jx|_{\infty}=\max_{|\alpha|=1}\norm{T^{\alpha}T_i^3T_jx}
	=\norm{T_i^4T_jx}=|T_i^4T_jx|_{\infty}, 
\end{align*}
for all $i \neq j$.

By repeating this process ad infinitum, we have for all $i \neq j$,
\begin{align*} 
	\norm{T_i^3T_jx}=\norm{T_i^4T_jx}=|T_i^4T_jx|_{\infty}
	=\norm{T_i^5T_jx}=|T_i^5T_jx|_{\infty}=...=\norm{T_i^nT_jx}, 
\end{align*}
for all $n \geq 3$, $n \in \N$, for all $x \in X$.

Therefore, the sequences $\left(\norm{T_i^{n-1}T_jx}\right)_{\substack{n \in \N \\ n \geq 4}}$ are constant, for all 
$i \neq j$, for all $x \in X$.

By equating \eqref{recursion in (2,p) and (2,inf)} and \eqref{Q^n for (2,p) and (2,inf)}, we get for all $n \geq 2, n \in \N$
\begin{align*}
	\sum_{j=1}^d\norm{T_j^nx}^p 
	= n \left(P_1(x) - \sum_{i=1}^{d}\sum_{\substack{j=1 \\ j \neq i}}^{d}\norm{T_i^{n-1}T_jx}^p \right)+ \norm{x}^p, 
	\ \ \forall x \in X.
\end{align*}
The left hand side is non-negative and bounded, for all $x \in X$, by Corollary \ref{max (m,inf)}, thus, so has to be the right hand side. Since
$ \sum_{i=1}^{d}\sum_{\substack{j=1 \\ j \neq i}}^{d}\norm{T_i^{n-1}T_jx}^p 
= \sum_{i=1}^{d}\sum_{\substack{j=1 \\ j \neq i}}^{d}\norm{T_i^{3}T_jx}^p$ 
is constant for $n \geq 4$, this forces
\begin{align}\label{equation P_1 id m=mu=d=2} 
P_1(x) = \sum_{i=1}^{d}\sum_{\substack{j=1 \\ j \neq i}}^{d}\norm{T_i^{3}T_jx}^p, \ \ \forall x \in X. 
\end{align}
Therefore, 
\begin{align}\label{equation T^4 is (1,p)}
	\sum_{j=1}^d\norm{T_j^nx}^p = \norm{x}^p, \ \ \forall n \geq 4, n \in \N, \forall x \in X.
\end{align}
Since $T_i^2T_j^2 = 0$ for all $i \neq j$ by Lemma \ref{T_i^mT_j^m=0}, replacing $x$ by $T^{\nu}_{j_0}x$ 
for $\nu \in \N$ with $\nu \geq 2$ in this last equation 
gives $\norm{T^{\nu}_{j_0}x} = \norm{T_{j_0}^{n+\nu}x}$ for all $n \geq 4$, $n \in \N$, for all $x \in X$. 

Hence, the sequences $\left(\norm{T_j^{k}x}\right)_{\substack{n \in \N \\ n \geq 2}}$ are constant for all 
$j \in \{1,...,d\}$, for all $x \in X$. This is (i).

Combining now (i) and \eqref{equation T^4 is (1,p)}, gives that we actually have
\begin{align*}
	\sum_{j=1}^d\norm{T_j^nx}^p = \norm{x}^p, \ \ \forall n \geq 2, n \in \N, \forall x \in X,
\end{align*}
which is one of the two equations we had to show for (ii). Now replace $x$ by $\sum_{i=1}^dT_i^{\nu}x$ for 
$\nu \in \N$ with $\nu \geq 2$ in this last equation. Then, again, since $T_i^2T_j^2 = 0$ for all $i \neq j$, we get 
that for all $n, \nu \geq 2$, $n, \nu \in \N$,
\begin{align*}
	\big \|\sum_{j=1}^d T_j^{\nu} x \big \|^p = \sum_{j=1}^d \norm{T_j^{n+\nu}x}^p 
	= \sum_{j=1}^d \norm{T_j^{n}x}^p, \ \ \forall x \in X. 
\end{align*}
This is the second equation we had to show.

(iii): The equation in (ii) implies that $\norm{x} \geq \max_{j=1,...,d} \norm{T_j^3x}$, for all $x \in X$. 
\begin{itemize}
\item[(iii.a)] If $\norm{x}=\norm{T_j^3x}$, for some $j \in \{1,...,d\}$ then obviously 
$\norm{T_i^3x}=0$ and $x \in N(T_i^3)$ for all $i \neq j$, again by (ii).
\item[(iii.b)] Assume $\norm{x} > \max_{j=1,...,d} \norm{T_j^3x}$. (Note that this implies $d>1$ and that we have more 
than one non-zero operator.)

Since $|x|_{\infty} \geq \norm{x}$ and 
$|x|_{\infty}=\max_{\alpha \in \N^d}\norm{T^{\alpha}x}=\max_{|\alpha|=3}\norm{T^{\alpha}x}$, it follows that
\begin{align*}
	|x|_{\infty}= \max_{\substack{i=1,...,d \\ j=1,...,d \\ j\neq i}}\norm{T_i^2T_jx}.
\end{align*}
(Since $T_iT_jT_k=0$ for distinct $i,j,k$ by Lemma \ref{T_i^mT_j^m=0}.)

Let $|x|_{\infty}=\norm{T_{i_0}^2T_{j_0}x}$ for some $i_0 \neq j_0$, $i_0,j_0 \in \{1,...,d\}$. Then
\begin{align*}
	&\norm{T_{j_0}x}^p= \sum_{\substack{j=1,...,d \\ j \neq i_0, j_0}} \norm{T_j^2T_{j_0}x}^p 
	+ \norm{T_{i_0}^2T_{j_0}x}^p + \norm{T^3_{j_0}x}^p \\
	\Leftrightarrow \ \ 
	&\norm{T_{j_0}x}^p= \sum_{\substack{j=1,...,d \\ j \neq i_0, j_0}} \norm{T_j^2T_{j_0}x}^p 
	+ |x|_{\infty}^p + \norm{T^3_{j_0}x}^p.
\end{align*}
Since $|x|_{\infty} \geq \norm{T^{\alpha}x}$ for all $\alpha \in \N^d$ simply by definition, we certainly have 
$x \in N(T_{j_0}^3)$.
\end{itemize}

We conclude that in any case, $x \in N(T_j^3)$ for some $j \in \{1,...,d\}$. 

In other words, 
$X= \bigcup_{j=1,...,d} N(T_j^3)$. Hence, $N(T_{j_0}^3)= X$, i.e. $T_{j_0}^3=0$ for one $j_0 \in \{1,...,d\}$. This 
gives $T_{j_0}^2=0$ by (i).
\end{proof}

\begin{corollary}
Let $T=(T_1,...,T_d) \in B(X)^d$ be a $(2,p)$-isometric tuple and a $(2, \infty)$-isometric tuple. Assume further that 
one of the following holds:
\begin{itemize}
\item[(i)] One of the operators $T_1,...,T_d$ is injective.
\item[(ii)] One the operators $T_1,...,T_d$ is surjective.
\item[(iii)] $T$ does not contain a non-zero nilpotent operator. 
\item[(iv)] We have $d=2$.
\item[(v)] We have $p=1$ and $X$ is stricly convex.
\item[(vi)] We have $0< p < 1$.
\end{itemize}
Then one operator $T_{j_0}$ is an isometry and all other operators 
satisfy $(T_{j_0}')^{\beta}=0$ for all $\beta \in \N^{d-1}$ with $|\beta|=2$, and are, in particular, nilpotent of order 
$\leq 2$. In case (iii), $T$ consists actually of one isometry and zeros. 
In case (ii), $T_{j_0}$ is actually an isometric isomorphism. 
\end{corollary}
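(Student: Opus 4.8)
The plan is to show, in each of the six cases, that one of the operators $T_{j_0}$ is an isometry; everything else is then automatic. Indeed, once $T_{j_0}$ is an isometry, Proposition \ref{isometry with nilpotent} (applied with $m=2$, using that $T$ is $(2,p)$-isometric) gives $(T_{j_0}')^{\beta}=0$ for all $\beta\in\N^{d-1}$ with $|\beta|=2$; in particular $T_i^2=0$ for every $i\neq j_0$, which is the stated conclusion, and in case (ii) ``$T_{j_0}$ surjective and isometric'' will give the isometric isomorphism, while in case (iii) the extra hypothesis forces the remaining operators to vanish altogether. So the whole problem collapses to producing an isometry among $T_1,\dots,T_d$.

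For this I would rely on two facts from Proposition \ref{prop (2,p) and (2,inf)}: for every $x$ the sequence $(\norm{T_j^n x})_{n\geq 2}$ is constant, and $\sum_{j=1}^d\norm{T_j^2 x}^p=\norm{x}^p$. These combine into a ``pinning'' step: if for some $j_0$ we have $T_i^2=0$ for all $i\neq j_0$ — so that $\norm{T_{j_0}^2 x}=\norm{x}$ for all $x$ — then applying the last identity to $T_{j_0}x$ yields $\norm{T_{j_0}^3x}=\norm{T_{j_0}x}$, while constancy of the sequence yields $\norm{T_{j_0}^3x}=\norm{T_{j_0}^2x}=\norm{x}$; hence $\norm{T_{j_0}x}=\norm{x}$ and $T_{j_0}$ is an isometry. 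Thus in most cases it suffices to show that at most one of the $T_j$ has $T_j^2\neq 0$, and here Lemma \ref{T_i^mT_j^m=0}, which gives $T_a^2T_b^2=0$ for $a\neq b$, is the main tool.

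Now the cases. For (i): if $T_{j_1}$ is injective then $\ker T_{j_1}^2=\{0\}$, and since $T_{j_1}^2T_i^2=0$ for $i\neq j_1$ we get $T_i^2=0$ for all $i\neq j_1$, so the pinning step applies. For (iv): Proposition \ref{prop (2,p) and (2,inf)}(iii) gives, after relabelling, $T_2^2=0$; with $j_0=1$ the hypothesis of the pinning step holds, and $T_1$ is an isometry. For (ii): if $T_{j_1}$ is surjective then so is $T_{j_1}^2$, and for $y=T_{j_1}^2x$ the constancy of $(\norm{T_{j_1}^nx})_{n\geq 2}$ gives $\norm{T_{j_1}^2y}=\norm{T_{j_1}^4x}=\norm{T_{j_1}^2x}=\norm{y}$; so $T_{j_1}^2$ is an isometry, the pinning computation (now with $\norm{T_{j_1}^2z}=\norm{z}$) shows $T_{j_1}$ is an isometry, and being also surjective it is an isometric isomorphism. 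For (iii): Proposition \ref{prop (2,p) and (2,inf)}(iii) produces a nilpotent $T_{j_0}$, hence $T_{j_0}=0$; deleting every zero operator leaves a shorter tuple $\tilde T$, still $(2,p)$- and $(2,\infty)$-isometric (deleting a zero operator changes neither $Q^n$ nor the maxima defining $(2,\infty)$-isometry) and still containing no non-zero nilpotent. If $\tilde T$ had length $\geq 2$, Proposition \ref{prop (2,p) and (2,inf)}(iii) would again produce a non-zero nilpotent, a contradiction; length $0$ would force $X=\{0\}$. So $\tilde T$ is a single non-zero $(2,p)$- and $(2,\infty)$-isometric operator, hence an isometry by \cite[Proposition 6.1]{HoMaOs}, and $T$ consists of this isometry and zeros. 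For (v) and (vi): Proposition \ref{prop (2,p) and (2,inf)}(ii) gives $\norm{\sum_j T_j^2x}^p=\sum_j\norm{T_j^2x}^p$. If $0<p<1$, then $\norm{\sum_j T_j^2 x}^p\le(\sum_j\norm{T_j^2x})^p\le\sum_j\norm{T_j^2x}^p$, and strict subadditivity of $t\mapsto t^p$ forces at most one $\norm{T_j^2x}$ to be nonzero. If $p=1$ and $X$ is strictly convex, equality in the triangle inequality forces any two nonzero $T_a^2x,T_b^2x$ to be positive multiples of each other; combined with $T_a^2T_b^2=0$ and commutativity this gives $T_b^4x=0$, and then constancy gives $T_b^2x=0$ — again at most one is nonzero. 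In both cases $X=\bigcup_{j=1}^d\bigcap_{i\neq j}\ker T_i^2$, a finite union of subspaces, so (as $\K$ is infinite) one of them is all of $X$, i.e. $T_i^2=0$ for all $i\neq j_0$, and the pinning step finishes.

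The main obstacle is the absence of any uniform argument: the six hypotheses are precisely six different ways of forcing ``at most one $T_j^2\neq 0$'' (or, for (ii) and (iv), of bypassing that step), and each needs its own reasoning. The most delicate is (v), where one must marry the triangle-equality characterisation of strict convexity with the nilpotence relations $T_a^2T_b^2=0$; a secondary point is the bookkeeping in (iii) — checking that deleting zero operators preserves both isometry conditions and that the degenerate cases (reduced tuple empty, or $X=\{0\}$) cause no trouble.
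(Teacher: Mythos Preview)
Your proof is correct and follows the same overall strategy as the paper: reduce each case to finding an isometry among the $T_j$, using Proposition~\ref{prop (2,p) and (2,inf)} and Lemma~\ref{T_i^mT_j^m=0} as the main tools, then invoke Proposition~\ref{isometry with nilpotent}.

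There are, however, two places where your execution is cleaner than the paper's. First, your ``pinning step'' --- deducing $\norm{T_{j_0}x}=\norm{x}$ directly from $\norm{T_{j_0}^2\,\cdot\,}=\norm{\cdot}$ together with the constancy of $(\norm{T_{j_0}^n x})_{n\geq 2}$ --- uses only the \emph{statements} of Proposition~\ref{prop (2,p) and (2,inf)}(i) and (ii); the paper instead reaches back into the proof of that proposition to use the identity \eqref{equation P_1 id m=mu=d=2} for $P_1$, which is heavier. Second, in case (vi) you exploit strict subadditivity of $t\mapsto t^p$ for $0<p<1$ to conclude immediately that at most one $\norm{T_j^2x}$ is nonzero, whereas the paper takes a detour through showing that $(T_1^2,\dots,T_d^2)$ is simultaneously a $(1,p)$- and a $(1,\infty)$-isometry and then applies Corollary~\ref{(1,p) and (m,infty)}. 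Both of your shortcuts are genuine simplifications; the paper's arguments, on the other hand, make the dependence on the earlier structural results slightly more visible. Your handling of (iii), including the check that deleting zero operators preserves both isometry conditions and the use of \cite[Proposition~6.1]{HoMaOs} at the terminal step, is also more carefully spelled out than in the paper.
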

\begin{proof}
(i): Let $T_{j_0}$ be injective. Then, by Lemma \ref{T_i^mT_j^m=0}, all the other operators $T_j$ for $j \neq j_0$ 
are nilpotent of degree $\leq 2$. In particular, by Proposition \ref{prop (2,p) and (2,inf)}.(ii), $T^2_{j_0}$ is an 
isometry. Without loss of generality, assume $T_1^{2}$ is an isometry and $T_j^{2}=0$, for all $j \in \{2,...,d\}$. 
Then, by \eqref{equation P_1 id m=mu=d=2} and since $\left(\norm{T_j^nx}\right)_{\substack{n \in \N \\ n \geq 2}}$ are 
constant,
\begin{align*}
	P_1(x)= \sum_{i=1}^{d}\sum_{\substack{j=1 \\ j \neq i}}^{d}\norm{T_i^{2}T_jx}^p = \sum_{j=2}^{d}\norm{T_jx}^p.
\end{align*}
Now, by definition, $P_1(x)=-\norm{x}^p+\sum_{j=1}^d\norm{T_jx}^p$, for all $x \in X$. Therefore, 
\begin{align*}
	-\norm{x}^p+\sum_{j=1}^d\norm{T_jx}^p = \sum_{j=2}^{d}\norm{T_jx}^p
	 \ \ \Leftrightarrow \ \ -\norm{x}^p+\norm{T_1x}^p=0, \ \ \forall x \in X,
\end{align*}
and $T_1$ is an isometry. Then we have $(T_{1}')^{\beta}=0$ for all $\beta \in \N^{d-1}$ with $|\beta|=2$ by 
Proposition \ref{isometry with nilpotent}

(ii): Let $T_{j_0}$ be surjective. Then $T^2_{j_0}$ is surjective and, since $\norm{T^3_{j_0}x}=\norm{T^2_{j_0}x}$ for 
all $x \in X$, by Proposition \ref{prop (2,p) and (2,inf)}.(i), the operator $T_{j_0}$ is actually an isometry. Then 
$(T_{j_0}')^{\beta}=0$ for all $\beta \in \N^{d-1}$ with $|\beta|=2$ by Proposition \ref{isometry with nilpotent}

(iii): Assume $T=(T_1,...,T_d)$ does not contain a non-zero nilpotent operator. Then, by Proposition 
\ref{prop (2,p) and (2,inf)}.(iii), 
$T$ must contain an operator which is the zero-operator, say $T_d$. But then we can reduce $T$ to $(T_1,...,T_{d-1})$ 
and repeat the argument until $T=(T_{j_0})$ for some isometric operator $T_{j_0}$.  

(iv): Let $T=(T_1, T_2)$. By Propposition \ref{prop (2,p) and (2,inf)}.(iii), one operator $T_2$, say, is nilpotent of 
degree $\leq 2$. Then, \ref{prop (2,p) and (2,inf)}.(ii) forces $T_1^2$ to be an isometry. But then $T_1$ is injective 
and the result follows from (i).

(v): If $p=1$, by Proposition \ref{prop (2,p) and (2,inf)}.(ii) we have that, for each $x \in X$, the triangle 
inequality becomes an equality for the vectors $T_j^2x$. If $X$ is strictly convex, this implies that there exist
$\lambda_{i,j, x} \in \R$ with $T^2_ix = \lambda_{i,j, x}T^2_jx$ for all $i, j$, for all $x \in X$. But then 
$T_j^2T_i^2 = 0$ for all $i \neq j$ implies $\lambda_{i,j, x}=0$ or $T_i^2x=T_j^2x = 0$. Therefore, there exists a 
$j_0$ with $T^2_j=0$ for all $j \neq j_0$ and, by Proposition \ref{prop (2,p) and (2,inf)}.(ii), $T^2_{j_0}$ is an isometry. 
Thus, $T_{j_0}$ is injective and the statement follows from (i).

(vi):Assume $0< p < 1$. Then, by Proposition \ref{prop (2,p) and (2,inf)}.(ii),
\begin{align*}
	\norm{x}=\left(	\sum_{j=1}^d \norm{T_j^2x}^p \right)^{1/p} &\geq \sum_{j=1}^d \norm{T_j^2x} 
	\geq \norm{ \sum_{j=1}^d T_j^2x} = 
	\norm{x}, \ \ \forall x \in X.
\end{align*}
That is, $\norm{x}=\sum_{j=1}^d \norm{T_j^2x} = \left(\sum_{j=1}^d \norm{T_j^2x}^p \right)^{1/p}$ for all $x \in X$. (In 
particular, $(T_1^2,...,T_d^2)$ is a $(1,p)$-isometry and a $(1,1)$-isometry.)
Hence, for all $x \in X$, the vectors $(\norm{T_1^2x} ,...,\norm{T_d^2x})$ lie on the 
same circle with radius $\norm{x}$ in $\R^d$ with respect to both $\pnorm{.}{1}$ and $\pnorm{.}{p}$. This means that, 
for all $x \in X$, $(\norm{T_1^2x} ,...,\norm{T_d^2x})$ is of the form $(\norm{x},0,...,0)$ with respect to some 
permutation (which, at this stage of the argument, may depend on $x$). Hence, each $x \in X$ is in the kernel of $d-1$ 
operators $T_1^2,...,T_d^2$ while the remaining operator, say $T^2_{j_0}$, acts isometrically on $x$. But then
$\max_{j=1,...,d} \norm{T_j^2x}=\norm{x}$ for all $x \in X$ and $(T_1^2,...,T_d^2)$ is a $(1,\infty)$-isometry. By Proposition \ref{(1,p) and (m,infty)}, $T^2_{j_0}$ is an isometry and therefore $T_{j_0}$ is injective.
\end{proof}

These results lead to the following question:

\begin{question}
Is every tuple of commuting bounded linear operators\\ $T = (T_1,...,T_d) \in B(X)^d$ which is simultaneously 
an $(m,p)$-isometric and an $(\mu,\infty)$-isometric tuple actually of the form of Proposition 
\ref{isometry with nilpotent}? 
\end{question}

That is, is one operator $T_{j_0}$ an isometry and all other operators 
satisfy $(T_{j_0}')^{\beta}=0$ for all $\beta \in \N^{d-1}$ with $|\beta|=m$, and are, in particular, nilpotent of order 
$\leq m$? Moreover, is an $(m,p)$-isometric tuple an $(m,\infty)$-isometric tuple if, and only if, it is an 
$(m,q)$-isometric tuple for \emph{all} $q \in (0,\infty)$?\\

\section*{Acknowledgements}

The extension from $d=2$ to a general $d$-tuple in Proposition \ref{isometry with nilpotent} is due to Mícheál Ó 
Searcóid, who also improved the proof of Lemma \ref{lemma for D}. 

Lemma \ref{T_i^mT_j^m=0} was proved in a first version (for $\mu=1$ and $d=2$) by Hermann Render, who also motivated
Corollary \ref{asymptotic}.

The first author would like to thank the School of Mathematical Sciences, University College Dublin for funding this work through a Research Demonstratorship and a bursary.

\bigskip

\hrule
\begin{quote}
Philipp Hoffmann\\ 
National University of Ireland, Maynooth\\
Department of Computer Science\\
Maynooth, County Kildare\\
Ireland\\
\begin{tabular}{@{}l@{ }l}
\emph{email:} & \url{philipp.hoffmann@cs.nuim.ie} \\
 & \url{philip.hoffmann@maths.ucd.ie}
\end{tabular}
\end{quote}

\

\begin{quote}
Michael Mackey\\
University College Dublin\\
School of Mathematical Sciences\\
Belfield, Dublin 4\\
Ireland\\
\begin{tabular}{@{}l@{ }l}
\emph{email:} \url{mackey@maths.ucd.ie}
\end{tabular}
\end{quote}


\begin{thebibliography}{0}
\bibitem{aign} M. Aigner, Diskrete Mathematik, Vieweg, Braunschweig/Wiesbaden 1993.
\bibitem{ag} J. Agler, A disconjugacy theorem for Toeplitz operators, \emph{Am. J. Math.} 112(1) (1990), 1-14.
\bibitem{ag-st1} J. Agler and M. Stankus, $m$-isometric transformations of Hilbert space, I, \emph{Integr. equ. oper. 
theory} Vol. 21, No. 4 (1995), 383-429.
\bibitem{ag-st2} J. Agler and M. Stankus, $m$-isometric transformations of Hilbert space, II, 
\emph{Integr. equ. oper. theory} Vol. 23, No. 1 (1995), 1-48.
\bibitem{ag-st3} J. Agler and M. Stankus, $m$-isometric transformations of Hilbert space, III, 
\emph{Integr. equ. oper. theory} Vol. 24, No. 4 (1996), 379-421.
\bibitem{ba} F. Bayart, $m$-Isometries on Banach Spaces, \emph{Mathematische Nachrichten}, Volume 284, No. 17-18 (2011), 
2141-2147.
\bibitem{BeWa} M. A. Berger and Y. Wang, Bounded semigroups of matrices, \emph{Linear Algebra Appl.} 166 (1992), 21-27.
\bibitem{bemarmart} T. Bermúdez, I. Marrero and A. Martinón, On the Orbit of an \emph{m}-Isometry,
\emph{Integr. equ. oper. theory} Vol. 64, No. 4 (2009), 487-494.
\bibitem{bemane} T. Bermúdez, A. Martinón and E. Negrín, Weighted Shift Operators Which are m-Isometries,
\emph{Integr. equ. oper. theory} Vol. 68, No. 3 (2010), 301-312.
\bibitem{bemamu} T. Bermúdez, A. Martinón and V. Müller, $(m,q)$-isometries on metric spaces, \emph{J. Operator Theory} 72:2 (2014), 313-329.
\bibitem{bo} F. Botelho, On the existence of $n$-isometries on $\ell_p$ spaces, \emph{Acta Sci. Math. (Szeged)} 76 : 1-2 (2010), 183-192.
\bibitem{Cho-Zel} M. Ch\={o} and W. \.{Z}elazko, On geometric spectral radius of commuting $n$-tuples of operators, 
\emph{Hokkaido Math. J.} 21 (2) (1992), 251-258.
\bibitem{Ah-He} M. Faghih Ahmadi and K. Hedayatian, Hypercyclicity and Supercyclicity of $m$-Isometric Operators, \emph{Rocky Mountain J. Math.} Volume 42, Number 1 (2012), 15-23. 
\bibitem{gleari} J. Gleason and S. Richter, $m$-Isometric Commuting Tuples of Operators on a Hilbert Space,
\emph{Integr. equ. oper. theory} Vol. 56, No. 2 (2006), 181-196 .
\bibitem{Har} R.E. Harte, Spectral mapping theorems, \emph{Poc. Roy. Irish Acad. Sect.} A 72 (1972), 89-107.
\bibitem{HoMaOs} P. Hoffmann, M. Mackey and M. Ó Searcóid, On the second parameter of an $(m,p)$-isometry, \emph{Integr. 
equ. oper. theory} Vol. 71, No. 3 (2011), 389-405.
\bibitem{LiSo} C.-K. Li and W. So, Isometries of $\ell_p$-norm, \emph{Amer. Math. Monthly} 101 (1994), 452-453.
\bibitem{Mue} V. M\"{u}ller, On the joint spectral radius, \emph{Annales Polonici Mathematici} 66 (1997), 173-182.
\bibitem{MueSol} V. M\"{u}ller and A. Soltysiak, Spectral radius formula for commuting Hilbert space operators, \emph{Studia Math.} 103 (3) (1992), 329-333.
\bibitem{ri} S. Richter, Invariant subspaces of the Dirichlet shift, \emph{J. reine angew. Math.} 386 (1988), 205-220.
\bibitem{ri(talk)} S. Richter and C. Sundberg, A model for two-isometric operator tuples with finite defect, talk at the AMS Sectional
meeting, Cornell University, September 2011,
\emph{http://www.math.utk.edu/$\sim$richter/talk/Cornell\_Sept\_2011}.
\bibitem{RoStra} G.-C. Rota and W. G. Strang, A note on the joint spectral radius, \emph{Indag. Math.} 22 (1960), 379-381.
\bibitem{sa} O.A. Sid Ahmed, $m$-isometric Operators on Banach Spaces, \emph{Asian-Eur. J. Math.} 3, No. 1 (2010), 1-19.
\bibitem{Slo-Zel} Z. S\l odkowski and W. \.{Z}elazko, On joint spectra of commuting families of operators, \emph{Studia 
Math.} 50 (1974), 127-148.
\bibitem{Sol} A. Soltysiak, On the joint spectral radii of commuting Banach algebra elements, \emph{Studia Math.} 105 (1) 
(1993), 93-99.
\bibitem{Tay} J.L. Taylor, A Joint Spectrum for Several Commuting Operators, \emph{J. Functional Analysis} 6 (1970), 172-191.
\bibitem{Zel} W. \.{Z}elazko, On a problem concerning joint approximate point spectra, \emph{Studia Math.} 45 (1973), 239-240. 
\end{thebibliography}
\end{document}